\def\boldsymbol{\mathbf}%
\def\boldsubgap{\hspace{-1.5pt}}%
\def\thmcounter{subsection}
\def\@maketitle{%
  \newpage
  \null
  \vskip 2em%
  \begin{center}%
  \let \footnote \thanks
    {\bfseries\LARGE \@title \par}%
    \vskip 1.5em%
    {\large
      \lineskip .5em%
      \begin{tabular}[t]{c}%
        \@author
      \end{tabular}\par}%
    \vskip 1em%
    {\large \@date}%
  \end{center}%
  \par
  \vskip 1.5em}
\newcommand{\stacks}[1]{\href{https://stacks.math.columbia.edu/tag/#1}{[St:~$\mathrm{#1}$]}}
\newcommand{\arxiv}[2]{\href{https://arxiv.org/abs/#1.#2}{\texttt{arXiv:}\hspace{0pt}\texttt{#1.}\hspace{0pt}\texttt{#2}}}
\renewcommand{\ge}{\geqslant}
\renewcommand{\le}{\leqslant}
\renewcommand{\setminus}{\smallsetminus}
\DeclareMathOperator{\GL}{GL}
\DeclareMathOperator{\Hom}{Hom}
\DeclareMathOperator{\Spec}{Spec}
\DeclareMathOperator{\Gal}{Gal}
\newcommand{\isochar}{\ensuremath{\sim}}
\newcommand{\cdisochar}{\scalebox{2}[1]{\isochar}}
\newcommand{\genisosign}[1]{\smash{\raisebox{-0.65ex}{#1}}}
\newcommand{\isosign}{\genisosign{\isochar}} 
\newcommand{\isoarrow}{\xrightarrow{\isosign}}
\newcommand{\tholine}[1]{
  \vbox{%
    \hrule height .08em
    \kern.25ex%
    \hbox{%
      \kern-.15ex%
      \ensuremath{#1}%
      \kern-.15ex%
    }
  }
}
\newcommand{\bF}{\boldsymbol{F}}
\newcommand{\bG}{\boldsymbol{G}}
\newcommand{\bP}{\boldsymbol{P}}
\newcommand{\cE}{\mathcal{E}}
\newcommand{\cF}{\mathcal{F}}
\newcommand{\cL}{\mathcal{L}}
\newcommand{\cO}{\mathcal{O}}
\newcommand{\Z}{\boldsymbol{Z}}
\newcommand{\Q}{\boldsymbol{Q}}
\newcommand{\R}{\boldsymbol{R}}
\renewcommand{\P}{\bP}
\newcommand{\fn}{\mathfrak{n}}
\newcommand{\fp}{\mathfrak{p}}
\newcommand{\fa}{\mathfrak{a}}
\newcommand{\Fpn}[1]{\bF_{\boldsubgap #1}}
\newcommand{\Fp}{\Fpn{p}}
\newcommand{\Fq}{\Fpn{q}}
\newcommand{\Ho}{H}
\newcommand{\Ga}{\bG_a}
\newcommand{\ab}{{\textup{ab}}}
\newcommand{\unr}{\textit{ur}}
\newcommand{\rambox}[1]{{#1}}
\newcommand{\ram}[1]{\rambox{#1}}
\setlist{itemsep=.125\baselineskip,topsep=.66\baselineskip}
\newlist{statements}{enumerate}{1}
\setlist[statements]{label=\textup{(\arabic*)}}
\newlist{properties}{enumerate}{1}
\setlist[properties]{itemindent=1em}
\titleformat{\section}{\filcenter\normalfont\bfseries\large}{\normalfont\thesection.\ }{0ex}{}
\titleformat{\subsection}{\normalfont\itshape}{\normalfont\thesubsection\ }{0ex}{}
\declaretheorem[numberwithin=\thmcounter,name=Theorem]{thm}
\declaretheorem[sibling=thm,name=Lemma]{lem}
\declaretheorem[sibling=thm,name=Corollary]{cor}
\declaretheorem[sibling=thm,style=definition,name=Definition]{dfn}
\declaretheorem[sibling=thm,style=definition,name=Example]{ex}
\declaretheorem[sibling=thm,style=definition,name=Remark]{rmk}
\declaretheorem[name=Theorem]{ithm}
\declaretheorem[numbered=no,style=definition,name=Definition]{dfno}
\DeclareMathOperator{\rank}{rank}
\newcommand{\card}[1]{|#1|}
\DeclareMathOperator{\Lie}{Lie}
\newcommand{\OK}{\cO_K}
\newcommand{\mK}{\fp_K}
\newcommand{\thickbar}[1]{
  \mathchoice{\tholine{#1}}{\tholine{#1}}%
  {\!\scalebox{.65}{\tholine{#1}}\!}%
  {\!\scalebox{.65}{\tholine{#1}}\!}%
}
\newcommand{\fpres}{\thickbar{\fp}}  
\newcommand{\infn}[1]{\|#1\|}
\newcommand{\infnx}[2]{\|#2\|_{#1}}
\newcommand{\AS}{\partial} 
\newcommand{\xtor}[2]{#1[#2\rangle}
\newcommand{\ntor}[1]{\xtor{#1}{\fn}}
\newcommand{\ator}[1]{\xtor{#1}{\fa}}
\newcommand{\Lat}{\varLambda}
\newcommand{\lv}{\lambda}
\newcommand{\ZLat}{L}
\newcommand{\zlv}{v}
\newcommand{\zq}{q}
\newcommand{\F}{F}
\newcommand{\OF}{R}
\newcommand{\mF}{\fp}
\newcommand{\ra}{r}
\newcommand{\Ba}[2]{B(#1,\,#2)}
\newcommand{\ofu}{\pi}
\newcommand{\twu}{\varepsilon}
\newcommand{\Veb}[1]{\Ba{V}{#1}}
\newcommand{\Fcpl}{\smash{\F_{\hspace{-.5pt}\mF}}}
\newcommand{\Vcpl}{\smash{V_{\hspace{-.5pt}\mF}}}
\newcommand{\mdist}{\rho}
\newcommand{\Vmc}{V_{\hspace{-.5pt}\mdist}}
\newcommand{\Latb}[1]{\{\lv\in\Lat,\:\infn{\lv} \leqslant #1\}}
\newcommand{\Lab}[1]{\Ba{\Lat}{#1}}
\newcommand{\fco}{\kappa}
\newcommand{\kinf}{\fco_\infty}
\newcommand{\kinfd}{[\kinf:\fco]}
\newcommand{\Finf}{\F_\infty}
\newcommand{\OFinf}{\cO_{\F,\infty}}
\newcommand{\twc}{c}
\newcommand{\cinf}{c}
\newcommand{\cco}{\card{\fco}}
\newcommand{\Lrat}{V}
\newcommand{\lrv}{v}
\newcommand{\Lrb}[1]{\Ba{\Lrat}{#1}}
\newcommand{\Lcpl}{V_\infty}
\newcommand{\Lcb}[1]{\Ba{\Lcpl}{#1}}
\newcommand{\Lcplq}{W}
\newcommand{\Xstr}{\cO_X}
\newcommand{\Xinf}{\cO_{X,\infty}}
\newcommand{\Etwn}[2]{#1(#2\infty)}%
\newcommand{\Etwneg}[1]{\Etwn{#1}{{-}}}%
\newcommand{\Atwn}[1]{\Etwn{\Xstr}{#1}}%
\newcommand{\Atw}{\Etwn{\Xstr}{}}%
\newcommand{\AtwI}{\Atwn{{-}}}%
\newcommand{\Ugr}{\cE_{\Lat}}%
\newcommand{\Vgrx}[2]{\cE_{#1, #2}}%
\newcommand{\Vgrb}[1]{\Vgrx{\Lat}{#1}}%
\newcommand{\Vgr}{\Vgrb{\ra}}%
\newcommand{\volex}[1]{\operatorname{vol}(#1)}
\newcommand{\volx}[1]{\volex{\Lat,\,#1}}
\newcommand{\volb}{\volx{\ra}}
\newcommand{\volu}{\volex{\Lat}}
\newcommand{\vah}{h}
\newcommand{\rdeg}{f}
\newcommand{\Bu}{\cE}
\newcommand{\econs}{C}
\newcommand{\econx}{3g + 2\rdeg - 1}
\newcommand{\canb}{\Omega}%
\newcommand{\torix}[1]{\textup{ev}_{#1}}%
\newcommand{\tori}{\torix{a}}%
\newcommand{\ad}{\textup{ad}}
\newcommand{\adp}[1]{{#1}_{\ad,\fpres}}
\newcommand{\Aad}{\adp{A}}
\newcommand{\Tad}{\adp{T}}
\newcommand{\rhad}{\rho}
\newcommand{\rhp}{\rho_{\fp}}
\newcommand{\kkum}[2]{[#1,\:#2)}
\newcommand{\kum}[3]{\kkum{#1}{#2}_{#3}}
\newcommand{\kumn}[1]{[\ ,\ )_{#1}}
\newcommand{\kumad}[2]{\kum{#1}{#2}{\ad}}
\newcommand{\kumadv}{[\ ,\ )_{\ad}}
\newcommand{\kumu}[2]{[#1,\ )_{#2}}
\newcommand{\kuml}[1]{[\lv,\ )_{#1}}
\newcommand{\kumadu}[1]{[#1,\ )_{\ad}}
\newcommand{\kumadl}{[\lv,\ )_{\ad}}
\newcommand{\monim}{\varGamma}
\newcommand{\coi}{m}
\newcommand{\lra}{n}
\newcommand{\econd}[1]{\operatorname{\mathfrak{f}}(#1)}
\newcommand{\cond}[1]{\operatorname{\mathfrak{f}}(#1/K)}
\newcommand{\Eg}{{D}}
\begin{document}
\title{Local monodromy of Drinfeld modules}
\author{M.~Mornev}
\date{\small%
\begin{center}%
EPFL SB MATH TN,
Station 8, 1015 Lausanne, Switzerland\\%
\texttt{maxim.mornev@epfl.ch}%
\end{center}%
}
\maketitle
\begin{abstract}
Compared with algebraic varieties the local monodromy of Drinfeld modules
appears to be hopelessly complex: 
The image of the wild inertia subgroup
under Tate module representations is infinite
save for the case of potential good reduction.

Nonetheless we show that Tate modules of Drinfeld modules
are ramified in a limited way: The image of a sufficiently deep
ramification subgroup is trivial. This leads to a new invariant, the
local conductor of a Drinfeld module.
We establish an upper bound on the conductor in
terms of the volume of the local period lattice.

As an intermediate step
we develop a theory of normed lattices in function field arithmetic
including the notion of volume.
We relate normed lattices to vector bundles on projective curves. 
An estimate on Castelnuovo--Mumford regularity of such bundles gives
a volume bound on norms of lattice generators, 
and the conductor inequality follows.

Last but not least we describe the image of inertia for Drinfeld modules
with local period lattices of rank $1$. Just as in the theory of local $\ell$-adic Galois representations
this image is commensurable with a commutative unipotent algebraic subgroup.
However in the case of Drinfeld modules such a subgroup can be a product
of several copies of~$\Ga$.
\end{abstract}

\section*{Introduction}

Let $K$ be a local field of positive characteristic
and let $E$ be a Drinfeld module of finite residual characteristic over $\Spec K$
(cf. \S\ref{predm}, \S\ref{reschar} for the terminology).
Pick a separable closure $K^s/K$. 
We will investigate the action of the inertia subgroup $I_K \subset \Gal(K^s/K)$
on the Tate modules $T_\fp E$. 
This parallels the study of local monodromy in
the $\ell$-adic cohomology theory. 

In the $\ell$-adic theory one assumes that the prime $\ell$ is different
from the residual characteristic of the base field.
By way of analogy we suppose that 
the prime $\fp$ is different from the residual characteristic~$\fpres$ of
the Drinfeld module~$E$.
The case $\fp = \fpres$ belongs to a function field
analogue of Fontaine's theory, and is the subject of a separate study.

Even under the assumption $\fp \ne \fpres$ the local
monodromy of Drinfeld modules is much more complicated than that of algebraic
varieties. The good reduction criterion of Takahashi implies that
the image of the wild inertia in $\GL(T_\fp E)$ is infinite
unless~$E$ has potential good reduction.

However there is a limit on the ramification. 
Recall that the inertia subgroup $I_K$ carries a descending filtration by closed normal subgroups $I_K^{\ram{u}}$,
$u \in \Q_{\geqslant 0}$,
the ramification filtration in upper numbering.

\begin{ithm}\label{introbound}
Let $E$ be a Drinfeld module of finite residual characteristic~$\fpres$
over $\Spec K$. Then there is a rational number $u\geqslant 0$ such that
for every prime $\fp\ne\fpres$
the ramification subgroup $I_K^{\ram{u}}$ acts trivially on the Tate module $T_\fp E$.
\end{ithm}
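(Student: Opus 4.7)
My plan is to reduce, via the analytic uniformisation of Drinfeld modules over local fields, to a Kummer computation inside an auxiliary Drinfeld module of good reduction, and there to extract a uniform-in-$\fp$ ramification bound from a Newton-polygon analysis in the formal group. Concretely, after a finite Galois extension $L/K$ the module $E_L$ admits a uniformisation $E_L \cong E'/\Lambda$ in which $E'$ is a Drinfeld $A$-module over $\OL$ of good reduction (with the same residual characteristic $\fpres$) and $\Lambda$ is a finitely generated $A$-lattice embedded in $E'(L^s)$; enlarging $L$ if necessary I assume $\Lambda \subset E'(L)$. The uniformisation produces, for every prime $\fp$ of $A$, a short exact sequence of $\Gal(L^s/L)$-modules
\[
0 \longrightarrow T_\fp E' \longrightarrow T_\fp E_L \longrightarrow \Lambda \otimes_A A_\fp \longrightarrow 0.
\]

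For $\fp \ne \fpres$ the torsion scheme $E'[\fp^n]$ is étale over $\OL$, so $I_L$ acts trivially on the sub-object $T_\fp E'$; the quotient is Galois-trivial by construction. The $I_L$-action on $T_\fp E_L$ is therefore captured by a Kummer cocycle $I_L \to \Hom(\Lambda \otimes A_\fp,\, T_\fp E')$ whose value on $\lambda \in \Lambda$ reads $\sigma \mapsto \sigma(x_{\lambda,a}) - x_{\lambda,a}$, for any $x_{\lambda,a} \in E'(L^s)$ with $\phi_a(x_{\lambda,a}) = \lambda$ and $(a) = \fp^n$. Fixing once and for all a finite generating set $\lambda_1, \dots, \lambda_r$ of $\Lambda$, the ramification of the full cocycle is that of the compositum of the fields $L(x_{\lambda_i,a})$.

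The central step is to bound the upper-numbering ramification of each $L(x_{\lambda_i,a})/L$ independently of $\fp \ne \fpres$ and of $a$. Since $\phi_a$ has unit constant coefficient when $\fp \ne \fpres$, the Newton polygon of $\phi_a(y) - \lambda_i$ has a single non-trivial slope determined by $v_L(\lambda_i)$, so all roots share a common valuation governed only by $\lambda_i$ and by fixed invariants of $E'$. Combined with the fact that the torsor $E'[a]$ is étale and splits already over $L^\unr$, a Newton-polygon-plus-formal-group analysis converts this into an upper-numbering ramification bound for $L(x_{\lambda_i,a})/L$ that depends only on $v_L(\lambda_i)$ and on fixed invariants of $E'$ and $\OL$---not on $\fp$ or on $a$. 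Taking the maximum over $i$ produces a rational $u_L \ge 0$ with $I_L^{u_L}$ trivial on $T_\fp E_L$ for every $\fp \ne \fpres$, and Herbrand's theorem for the finite extension $L/K$ then transports this into the bound $u \ge 0$ required by the statement.

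The principal obstacle is the uniformity of the Kummer ramification estimate in $\fp$. Unlike the classical $\ell$-adic situation, $E'[\fp^n]$ is always a $p$-primary group, so the Kummer extensions are genuinely wildly ramified and one cannot appeal to tameness. What rescues the argument is that the Newton polygon slope attached to each $\lambda_i$ is a fixed, $\fp$-independent invariant of the pair $(\lambda_i, E')$, and that formal-group multiplication by $a$ with $(a) \mid \fp^\infty$ is uniformly invertible by a power series over $\OL$. Making this quantitative in terms of $\Lambda$ alone is what the paper's formalism of normed lattices, together with the volume bounds from Castelnuovo--Mumford regularity, is designed to accomplish; for the qualitative Theorem~\ref{introbound} only the existence of such a bound is needed.
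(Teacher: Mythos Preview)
Your global architecture matches the paper's: pass to a finite Galois extension $L/K$ over which $E$ has stable reduction and $\Lambda\subset E'(L)$, use the uniformisation sequence to rewrite the inertia action as a Kummer cocycle $I_L\to\Hom_A(\Lambda,T_\fp E')$, bound the ramification of each $\kumu{\lambda_i}{}$ uniformly in~$\fp$, and finally push the bound from $L$ back to $K$ via the Herbrand function. Steps 1, 2 and 4 are exactly what the paper does (Lemma~\ref{unimon}, Theorem~\ref{tatevanish}, Theorem~\ref{vanish}).

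The gap is in your Step~3. You assert that ``a Newton-polygon-plus-formal-group analysis converts this into an upper-numbering ramification bound \dots\ that depends only on $v_L(\lambda_i)$'', but you do not supply the mechanism, and a Newton polygon alone does not give one. The polygon of $\phi_a(y)-\lambda_i$ indeed has a single slope, yielding $v_L(y)=v_L(\lambda_i)/q^n$; but this valuation tends to $0$ as $n\to\infty$ and says nothing directly about the upper-numbering break, which for an Artin--Schreier extension $y^p-y=c$ is $-v(c)$, not $-v(y)$. Since $[L(x_{\lambda_i,a}):L]$ grows without bound with~$a$, you genuinely need an argument that the upper break stays bounded, and neither your Newton-polygon remark nor the formal-group invertibility of $\phi_a$ over $\cO_L$ furnishes it.

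The paper's device at this point (Theorem~\ref{kumimage}\ref{rambound}) is different and concrete: for each $\Fp$-linear character $f\colon E'[a]\twoheadrightarrow\Fp$, Lemma~\ref{asfactor} produces a scalar $u\in K$ such that $f\circ\kuml{a}=\kumu{u\lambda}{p}$, with $u$ expressed as a ratio of Moore determinants in a basis of $E'[a]$. Good reduction together with $a\notin\fpres$ forces $E'[a]\subset E'(\cO_{K^\unr})$ to inject into the residue field, so both Moore determinants are units and hence $v(u\lambda)=v(\lambda)$. Hasse's classical computation of the ramification of $X^p-X=u\lambda$ then gives the break $\|\lambda\|=\max(0,-v(\lambda))$, independent of $a$ and of~$\fp$. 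This reduction to the basic Artin--Schreier case, via the unit-ness of the Moore-determinant scaling factor, is the idea your plan is missing. Your final paragraph correctly identifies the normed-lattice and regularity machinery as addressing the \emph{quantitative} Theorem~\ref{introcondbound}; it plays no role in the qualitative Theorem~\ref{introbound}.
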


I first deduced this result from a $z$-adic analogue of Grothendieck's
$\ell$-adic monodromy theorem. The $z$-adic monodromy theorem applies
in the generality of arbitrary $A$-motives, and will be published in a forthcoming
article.
A direct and less technically involved proof of Theorem~\ref{introbound} was found by Richard Pink
after some discussions with me.
In this paper I would like to present a revision of Pink's argument (Theorem~\ref{kumimage}~\ref{rambound})
together with related results.

\medskip
Replacing $K$ by a finite separable extension we are free to assume that
the Drinfeld module $E$ has \emph{stable reduction}.
This means that $E$ is an analytic quotient of a Drinfeld module $D$ of good reduction 
by a \emph{period lattice} $\Lat\subset D(K^s)$, cf. \S\ref{conductor}.
Let us further assume that the lattice $\Lat$ is \emph{defined over $K$}, which is to say, $\Lat\subset D(K)$.
Then the action of the inertia group $I_K$ on $T_\fp E$ factors through the maximal abelian quotient $I_K^\ab$.
By Hasse--Arf theorem the induced ramification filtration of $I_K^\ab$
is integrally indexed, so we can make the following definition:

\begin{dfno}
The local \emph{conductor} $\cond{E}$ is the least integer $\coi\ge0$ such that
the ramification subgroup $\smash{I_K^{\ab,\coi+1}}$ acts trivially on $T_\fp E$ for all
$\fp\ne\fpres$.
\end{dfno}

The conductor is zero if and only if $\Lat = 0$ i.e. if the Drinfeld module $E$ has good reduction. 
Otherwise the conductor is a positive integer that is prime to~$p$. 
An important property of the conductor is its invariance under isogenies. 

The conductor is hard to compute, and we can do it only for special period lattices (Theorem~\ref{lsm}).
In general we can bound the conductor from above by a more amenable invariant.

In the following let us denote by $A$ the chosen \emph{coefficient ring} of our Drinfeld modules, cf. \S\ref{predm}.
This ring is a Dedekind domain over $\Fq$ with finite group of units.
The period lattice $\Lat$ carries a natural structure of a finitely generated projective $A$-module.

Poonen \cite{poonen} introduced the notion of canonical local height.
This provides the period lattice~$\Lat$ with a natural norm.
Building on the work of Taguchi~\cite[\S4]{taguchi} we develop a
theory of normed $A$-lattices in function field arithmetic (\S\ref{lattices}),
and in particular define the volume of such lattices.

\begin{ithm}\label{introcondbound}
Let $E$ be a Drinfeld $A$-module of stable reduction over $\Spec K$
with the local period lattice~$\Lat$ defined over~$K$.
Let $r$ be the rank of~$E$.
Then we have an inequality
\begin{equation*}
\cond{E} \leqslant \volu^s \cdot \econs^{s(r-s)}
\end{equation*}
where $\volu$ is the volume of the period lattice,
$s = r - \rank_A(\Lat)$ 
and $\econs$ is an effective constant that depends only on~$A$.
\end{ithm}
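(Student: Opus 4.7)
The plan is to use Kummer theory to translate the bound on $\cond{E}$ into bounds on norms of generators of the local period lattice $\Lat$, and then to apply the Castelnuovo--Mumford regularity estimate for the vector bundle corresponding to $\Lat$, as developed in~\S\ref{lattices}. Since $\Lat$ is defined over $K$ and $D$ has good reduction of rank~$s$, the Tate module $T_\fp E$ fits into the short exact sequence of $I_K$-modules
$$0 \longrightarrow T_\fp D \longrightarrow T_\fp E \longrightarrow \Lat \otimes_A A_\fp \longrightarrow 0,$$
and both outer terms carry trivial inertia action: $T_\fp D$ because $D$ has good reduction and $\fp\ne\fpres$, and $\Lat\otimes_A A_\fp$ because $\Lat\subset D(K)$. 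Hence the inertia action on $T_\fp E$ is encoded in a Kummer map $\kumadv\colon \Lat \times I_K^{\ab} \to T_\fp D$, and $\cond{E}$ equals the maximum of the (upper-numbering) conductors of the individual characters $\kumadu{\lv}$ as $\lv$ ranges over any $A$-generating set of $\Lat$.

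The technical heart of the argument is a pointwise conductor estimate,
$$\mathfrak{f}\bigl(\kumadu{\lv}/K\bigr) \;\leqslant\; \infn{\lv}^{s},$$
valid for every non-zero $\lv\in\Lat$. I expect to derive this via Theorem~\ref{kumimage} by analysing the Newton polygon of multiplication by $\fp$ on the formal module of $D$: the exponent $s = \rank_A T_\fp D$ records the rank of the Kummer torsor along which the ramification can accumulate. The Hasse--Arf theorem is then needed to convert the lower-numbering information naturally produced by the Newton polygon into the upper-numbering conductor appearing in the definition of $\cond{E}$.

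To bound the norms of an $A$-generating set for $\Lat$, I would invoke the dictionary between normed $A$-lattices and vector bundles on the projective completion $X$ of $\Spec A$. The rank $r - s$ lattice $\Lat$ corresponds to a vector bundle whose degree is measured by the volume $\volu$, and an effective Castelnuovo--Mumford regularity estimate, with constant $\econs = 3g + 2\rdeg - 1$ depending only on $A$, provides an $A$-generating set $\lv_1,\dots,\lv_{r-s}$ of $\Lat$ with $\infn{\lv_i}\leqslant\volu\cdot\econs^{\,r-s}$ for every $i$. Combined with the pointwise conductor estimate, this yields
$$\cond{E} \;\leqslant\; \max_i\bigl(\infn{\lv_i}\bigr)^{s} \;\leqslant\; \bigl(\volu\cdot\econs^{\,r-s}\bigr)^{s} \;=\; \volu^{s}\cdot\econs^{\,s(r-s)},$$
as required.

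The main obstacle I expect is the pointwise Kummer estimate. Upper-numbering conductors are not well-behaved under projective limits, so bounding the conductor of the full Kummer character $\kumadu{\lv}$ requires a careful finite-level analysis tying the ramification jumps to the $\fp^n$-division structure of $\lv$ in $D$. Once this key inequality is established, the regularity input and the exact-sequence argument above combine in a formal manner to give the stated bound.
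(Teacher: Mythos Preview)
Your overall architecture is right and matches the paper: reduce to the Kummer pairing via the short exact sequence, bound the conductor of each generator by its Poonen height, and then bound the heights of a generating set using the regularity estimate of~\S\ref{lattices}. However, you have misplaced the exponent~$s$, and this reflects a genuine misunderstanding of where it enters.

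The pointwise Kummer estimate is simply $\mathfrak{f}\bigl(\kumadu{\lv}\bigr) \leqslant \infn{\lv}$, with no $s$-th power; this is exactly Theorem~\ref{kumimage}~\ref{rambound}, and its proof is an elementary Artin--Schreier computation (reduce to a principal level via Lemma~\ref{initial}, then factor through the classical pairing $\kumn{p}$ using Lemma~\ref{asfactor} and invoke Hasse's computation of the ramification of $X^p - X = u\lv$). There is no Newton polygon of a formal module, no accumulation of ramification along a rank-$s$ torsor, and no lower-to-upper conversion needed. Your anticipated ``main obstacle'' is therefore not an obstacle at all.

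The exponent $s$ enters on the lattice side. The Poonen height on $\Eg(K)$ is homogeneous with respect to $|\cdot|_\infty^{\,s}$ (Lemma~\ref{valrel}), so to fit the framework of~\S\ref{lattices} one must work with the renormalised norm $\infn{\cdot}^{1/s}$ on~$\Lat$. The volume $\volu$ in the statement is the volume of $(\Lat,\,\infn{\cdot}^{1/s})$, and Theorem~\ref{volbound} yields generators with $\infn{\lv_i}^{1/s} \leqslant \volu \cdot \econs^{\,r-s}$, i.e.\ $\infn{\lv_i} \leqslant \volu^{s}\cdot\econs^{\,s(r-s)}$. Combining with the linear pointwise bound $\cond{E}\leqslant\max_i\infn{\lv_i}$ (Theorem~\ref{tatevanish}) gives the result. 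Note also that $\econs = \cco^{\,3g+2\rdeg-1}$, not $3g+2\rdeg-1$.
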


To prove this bound we set up a correspondence between normed $A$-lattices and systems of vector bundles on
the compactification of the curve $\Spec A$.
For such bundles one has a classical criterion of global generation via Castelnuovo--Mumford regularity.
Translated to the setting of lattices this gives a volume bound on norms of generators,
and the conductor inequality follows.

The bound of Theorem~\ref{introcondbound} is effective in the following sense:
There is an algorithm which calculates $\volu$ from the $\tau$-polynomials defining
the Drinfeld module~$E$. This will be demonstrated in a forthcoming article.

Asayama and Huang \cite{asayamahuang,huang} introduced a notion
of conductor for Drinfeld modules of rank~$2$ and proved a version of Szpiro's
conjecture under additional assumptions. It would be interesting
to know the precise relation between the conductor of Asayama--Huang
and the conductor of this paper.

Last but not least we study the image of inertia in $\GL(T_\fp E)$.
The following result can be seen as a local open image theorem for
Drinfeld modules with period lattices of rank $1$:
\begin{ithm}\label{introimg}
Let $E$ be a Drinfeld module of finite residual characteristic $\fpres$ and rank~$r$ over $\Spec K$.
Suppose that the local period lattice of $E$ has rank~$1$.
Then for each prime $\fp\ne\fpres$ the image of inertia in $\GL(T_\fp E)$
is commensurable with a closed algebraic subgroup $U \cong (\Ga)^{\times(r-1)}$.
\end{ithm}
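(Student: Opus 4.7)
The plan combines the analytic uniformization of stable-reduction Drinfeld modules with Kummer theory for the rank-one period lattice. After replacing $K$ by a finite separable extension---which preserves commensurability, since it only restricts $I_K$ to an open subgroup---I may assume $E$ has stable reduction and that the period lattice $\Lat$ is defined over $K$. The uniformization exhibits $E$ as the analytic quotient by $\Lat$ of a Drinfeld module $D$ of good reduction and rank $r-1$, yielding a $\Gal(K^s/K)$-equivariant short exact sequence of free $A_\fp$-modules
\[
0 \to T_\fp D \to T_\fp E \to \Lat\otimes_A A_\fp \to 0.
\]
Good reduction of $D$ makes $I_K$ act trivially on $T_\fp D$, and rationality of $\Lat$ makes it act trivially on $\Lat\otimes_A A_\fp$. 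Hence the image of inertia lies in the closed unipotent subgroup
\[
U := \Hom_{A_\fp}(\Lat\otimes_A A_\fp,\; T_\fp D) \;\subset\; \GL(T_\fp E),
\]
which, after choosing a generator of $\Lat$, identifies with $T_\fp D \cong A_\fp^{r-1}$ and is isomorphic to $\Ga^{\times(r-1)}$ as a closed algebraic subgroup of $\GL(T_\fp E)$. This is one half of commensurability.

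For the other half I must show that the Kummer homomorphism $\kappa\colon I_K \to U$ has image of finite index in $U$. Explicitly, for a generator $\lambda\in\Lat$ and any compatible system $\lambda_n\in D(K^s)$ with $[\fp^n]_D \lambda_n = \lambda$, one has $\kappa(\sigma)(\lambda) = (\sigma\lambda_n - \lambda_n)_n \in T_\fp D$. Because $I_K$ is normal in $\Gal(K^s/K)$ and acts trivially on $T_\fp D$, a direct cocycle computation shows that $\kappa(I_K)$ is stable under the induced $\Gal(K^s/K)/I_K$-action on $T_\fp D$; so it is a Frobenius-stable $A_\fp$-submodule of $T_\fp D$. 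To prove it has full rank $r-1$ I would invoke the positivity of the local canonical height of $\lambda$ in the sense of Poonen: because $\lambda$ is a non-trivial period, its canonical height is strictly positive, which forces the Kummer tower $K(\lambda_1,\lambda_2,\ldots)/K(D[\fp^\infty])$ to be unbounded in each $A_\fp$-direction and rules out $\kappa(I_K)$ sitting inside any proper Frobenius-stable submodule.

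The principal obstacle is this lower-bound step. The upper bound of Theorem~\ref{introbound} only constrains $\kappa(I_K)$ to lie in a deep ramification subgroup of $U$, whereas commensurability demands the complementary assertion of openness. The classical analog is Tate's uniformization $E = \Gm/q^{\Z}$, where the $\ell$-adic inertia image fills the full unipotent $\Zl \subset \GL_2(\Zl)$ precisely because the period $q$ has non-zero valuation. In the Drinfeld setting the rank-one hypothesis on $\Lat$ is crucial: a single period $\lambda$ must generate the whole image, and this succeeds because $T_\fp D$ is small enough (of rank $r-1$) for the Galois orbit of one positive-height element to span a full-rank submodule. Making this rigorous draws on the theory of normed lattices and canonical local heights developed in the body of the paper, and on the Kummer-theoretic machinery of Theorem~\ref{introbound}.
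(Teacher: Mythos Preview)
Your structural setup is correct and matches the paper: after a finite separable base change you obtain the short exact sequence of Tate modules, the inertia image lands in the unipotent subgroup $U$, and $U \cong (\Ga)^{\times(r-1)}$ because $\Lat$ has rank one. The gap is exactly where you locate it---the openness of the Kummer image---but your proposed route through Frobenius-stability does not close it. The cocycle identity $\kappa(g\sigma g^{-1}) = g\cdot\kappa(\sigma)$ does show that $\kappa(I_K)$ is stable under the $G_K/I_K$-action on $T_\fp D$. However, $\kappa(I_K)$ is a priori only a closed $\Fp$-vector subspace of $T_\fp D$, not an $A_\fp$-submodule: nothing in the construction makes it stable under multiplication by $A_\fp$, so speaking of its ``rank'' or of it avoiding ``proper Frobenius-stable submodules'' is not meaningful. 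And even were it an $A_\fp$-submodule, Frobenius-stability alone would not force full rank; $T_\fp D$ can have many proper Frobenius-stable submodules, and the phrase ``unbounded in each $A_\fp$-direction'' is not an argument.

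The paper's mechanism is quite different and is contained in Theorem~\ref{kummeropen}, proved via an iterative height-reduction (Lemma~\ref{division}). If the Kummer map $\kumadl\colon I_K \to \Tad D$ is not surjective, then for some non-constant $a\in A$ prime to $\fpres$ the map $\kuml{a}\colon I_K \to D[a]$ already fails to surject. Its image is the inertia group of the extension $L/K$ generated by an $a$-division point $\lv'$ of $\lv$, so the ramification index $e$ satisfies $e<|D[a]|=|a|_\infty^s$. Combining Lemma~\ref{valrel} with $\infnx{L}{\lv}=e\,\infnx{K}{\lv}$ gives $|a|_\infty^s\,\infnx{L}{\lv'}=e\,\infnx{K}{\lv}$; since $e$ and $|a|_\infty^s$ are both $p$-powers, the $p$-adic valuation of the height strictly decreases: $v_p(\infnx{L}{\lv'})<v_p(\infnx{K}{\lv})$. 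As heights are non-negative integers this process terminates, at which point the Kummer map is surjective, and unwinding shows the original image was open. The positivity of the height enters only to guarantee $\lv\notin D(\OK)$ at each stage, not through any module-theoretic spanning argument.
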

Let $V$ be a local $\ell$-adic Galois representation.
The $\ell$-adic monodromy theorem of Grothendieck implies that the image of inertia
in $\GL(V)$ is commensurable with a closed algebraic subgroup $U \cong (\Ga)^{\times d}$,
but the dimension $d$ can only be~$0$ or~$1$.
Theorem~\ref{introimg} shows that in the case of Drinfeld modules the dimension~$d$
can be as large as $\dim(V) - 1$.

The situation with Drinfeld modules that have period lattices of rank at least $2$ 
is made complicated by the presence of nontrivial endomorphisms.
Still one expects that a suitable local open image theorem holds in all cases.
This is a subject of current research.

\bigbreak
Finally let us discuss the proofs of Theorems 1, 2 and 3.
Replacing $K$ with a finite separable extension we are free to assume that the Drinfeld module $E$ is an analytic quotient of a Drinfeld module $D$ of good reduction by a period lattice $\Lat\subset D(K)$. 
For each prime $\fp\ne\fpres$ we then have a short exact sequence of $G_K$-modules
\begin{equation*}
\begin{tikzcd}
0 \rar & T_\fp D \rar & T_\fp E \rar & A_\fp \otimes_A \Lat \rar & 0
\end{tikzcd}
\end{equation*}
with $A_\fp$ the $\fp$-adic completion of $A$.
The inertia group $I_K$ acts trivially on the first and the last module. 
Consequently, the action of $I_K$ on $T_\fp E$ differs from the identity by a homomorphism $I_K \to \Hom_A(\Lat,\,T_\fp D)$.
This homomorphism can be equivalently seen as a pairing
\begin{equation*}
\Lat \times I_K \to T_\fp D.
\end{equation*}
In the exact same way as in the theory of abelian varieties this pairing turns out to be the restriction to $\Lat$ of a certain universal pairing
\begin{equation*}
\kumn{\fp}\colon D(K) \times I_K \to T_\fp D,
\end{equation*}
the $\fp$-adic \emph{Kummer pairing} of $D$.


The Kummer pairing from the theory of abelian varieties factors through the tame quotient of the inertia group $I_K$.
This quotient group is procyclic of order prime to the residual characteristic $p$ of $K$.
By contrast the Kummer pairing of the Drinfeld module $D$ factors through the maximal quotient $J_K = I_K^\ab/(I_K^\ab)^{\times p}$ that is abelian of exponent $p$. Unlike the tame quotient, the induced ramification filtration on $J_K$ is nontrivial, and in fact has a break at every positive integer that is prime to $p$.

We study how the Kummer pairing of $D$ interacts with the ramification filtration on $J_K$.
We prove that for each element $\lv \in D(K)$ the homomorphism $\kum{\lv}{\ }{\fp}\colon J_K \to T_\fp D$ vanishes on a sufficiently deep ramification subgroup $J_K^u$
with $u$ bounded from above by the Poonen height of $\lv$,
see Theorem~\ref{kumimage}.
This property of the Kummer pairing implies Theorems 1 and 2.

We also prove that the homomorphism $\kum{\lv}{\ }{\fp}\colon J_K \to T_\fp D$ has open image 
provided that $\lv$ is not contained in the subset of integral elements $D(\cO_K)$, see Theorem \ref{kummeropen}. This property implies Theorem 3.

\vspace{1em}\noindent\textit{The Stacks Project}

\medbreak\noindent
We will use The Stacks Project \cite{stacks} as a source for algebraic geometry.
References to The Stacks Project
have the form [St:~$wxyz$] where ``$wxyz$'' is a combination of letters and numbers.
The corresponding item is located at \url{https://stacks.math.columbia.edu/tag/}$wxyz$.

\vspace{1em}\noindent\textit{Acknowledgements}

\medbreak\noindent
I would like to thank
Nihar Gargava,
Stefano Filipazzi,
Ambrus P\'al,
Mihran Papikian,
Federico Pellarin,
Richard Pink,
Maryna Viazovska
and the anonymous reviewers of this paper.
My research is supported by an Ambizione grant of Swiss National Science Foundation 
(project \texttt{PZ00P2\_202119}). The initial investigation was performed when
I was supported by ETH Z\"urich Postdoctoral Fellowship.

This project arose from conversations with Ambrus P\'al
at a workshop of National Center for Theoretical Sciences
in Taipei. I would like to thank NCTS for hospitality.

\section{Normed lattices}\label{lattices}%

Recall that a \emph{lattice} in the sense of geometry of numbers is
a finitely generated free $\Z$-module $\ZLat$
equipped with a positive-definite quadratic form $\zq\colon \ZLat \to \R_{\geqslant 0}$.
The induced norm $\infn{\cdot} = \sqrt{q(\cdot)}$ 
is homogeneous with respect to the archimedean absolute value:
We have $\infn{a \zlv} = |a| \cdot\infn{\zlv}$
for all integers $a \in \Z$ and vectors $\zlv \in \ZLat$.

We shall transfer the notion of a normed lattice to the setting of function fields.
To this end we draw upon an informal analogy between the pair
$(\Q,\,\infty)$ consisting of the field of rational numbers~$\Q$
and its unique archimedean place, 
and the pair $(F,\,\infty)$
consisting of a global function field~$F$ and an arbitrarily chosen place~$\infty$.

The ring~$\Z$ is the set of elements $x\in\Q$
which are regular away from~$\infty$ and the field~$\R$ is the 
completion of~$\Q$ at~$\infty$. In the same way the pair $(\F,\,\infty)$ 
determines a Dedekind domain~$A$ and a locally compact field~$\Finf$.
One can consider the pair of rings $A\subset\Finf$ as a function field
equivalent of the pair $\Z\subset\R$.

Inspired by this observation we define an \emph{$A$-lattice} as
a finitely generated projective $A$-module~$\Lat$ equipped
with a norm $\infn{\cdot}\colon \Lat \to \R_{\geqslant 0}$
which is homogeneous with respect to a fixed $\infty$-adic absolute value on $\Finf$
and induces the discrete topology on~$\Lat$. 
%
The study of normed lattices in function field arithmetic was initiated by Taguchi~\cite[\S4]{taguchi}.
We develop the theory further by relating normed lattices to vector bundles on the 
smooth projective curve~$X$ which has~$F$ as the field of rational functions.
 
As is the case for $\Z$-lattices the norm of an $A$-lattice $\Lat$
extends uniquely to a norm on the ``real'' vector space $\Lcpl = \Finf \otimes_A \Lat$
but this result is rather subtle.
One defines the volume of the lattice~$\Lat$ as a normalized
volume of its fundamental domain in the locally compact vector space~$\Lcpl$.
The relation between lattices and vector bundles
leads to a bound on norms of generators in terms of the volume
(Theorem~\ref{volbound}).

In \cite[\S4]{taguchi} Taguchi defined the discriminant of a lattice.
At the moment it is not clear what is the relation between the volume
as defined in this paper and the discriminant of Taguchi.

\subsection{Normed vector spaces}\label{vecnorms}

As a preparation for the theory of lattices let us review some properties of normed vector spaces following \cite[Ch.~2]{bgr}.
We fix a field $\F$ and an absolute value
$|\hspace{1pt}\cdot\hspace{1pt}|\colon \F \to \R_{\geqslant 0}$
that arises from a non-trivial discrete valuation.
Let $\OF = \{ x \in \F, \,|x| \leqslant 1 \}$ be the corresponding ring of integers.
We denote its maximal ideal by~$\mF$. 

Let $V$ be a finite-dimensional $\F$-vector space.
Recall that a $\mF$-adic \emph{norm} on~$V$ is a function
$\infn{\cdot}\colon V \to \R_{\geqslant 0}$ 
with the following properties:
\begin{properties}[label=(\ensuremath{\mathrm{V}_{\!\arabic*}})]
\item\label{vnormnorm}
$\infn{v} = 0$ if and only if $v = 0$,

\item\label{vnormultra}
$\infn{v + v'} \leqslant \max\{ \infn{v}, \:\infn{v'} \}$,

\item\label{vnormscale}
$\infn{x v} = |x| \cdot \infn{v}$ for all $x \in F$.
\end{properties}
We have $\infn{{-}v} = \infn{v}$ by the homogeneity property \ref{vnormscale},
so the ultrametric inequality~\ref{vnormultra}
becomes an equality when $\infn{v}\ne\infn{v'}$. 

\medskip
Let $\ofu \in \F$ be a uniformizer and set $\twu := |\ofu|$.
Observe that $\twu \in (0, 1)$.

\begin{lem}\label{normvalues}
The image of the norm function
$\infn{\cdot}\colon V \to \R_{\geqslant 0}$ has the form
\begin{equation*}
r_1 \twu^\Z \cup \dotsc \cup r_m \twu^\Z \cup \{0\}
\end{equation*}
where
$r_1, \: r_2, \:\dotsc, \: r_m \in (\twu,\:1]$
is a sequence of real numbers, $m\le\dim V$.
\end{lem}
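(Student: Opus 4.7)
The plan is to exploit the homogeneity axiom \ref{vnormscale} to see that the image of $\infn{\cdot}$ is a union of orbits of the multiplicative group $\twu^{\Z} = |\F^\times|$, and then bound the number of orbits by $\dim V$ via a linear independence argument.

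First I would observe that for any nonzero $v \in V$, axiom \ref{vnormscale} gives
\begin{equation*}
\{\,\infn{xv} : x \in \F^\times\,\} \;=\; |\F^\times|\cdot\infn{v} \;=\; \twu^{\Z}\cdot\infn{v}.
\end{equation*}
Thus the image of $\infn{\cdot}$ on $V\smallsetminus\{0\}$ is a union of cosets of $\twu^{\Z}$ inside $\R_{>0}$. Each coset has a unique representative in the fundamental half-open interval $(\twu,\,1]$, so we may write this image as $r_1\twu^{\Z}\cup\dotsb\cup r_m\twu^{\Z}$ for a uniquely determined (possibly infinite, a priori) sequence of distinct $r_i \in (\twu,\,1]$.

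Next I would prove the finiteness bound $m \leqslant \dim V$ by showing that if $v_1, \dots, v_m \in V$ have norms lying in distinct cosets of $\twu^{\Z}$ in $\R_{>0}$, then they are $\F$-linearly independent. Suppose for contradiction that $\sum_{i=1}^m a_iv_i = 0$ with not all $a_i$ zero. The nonzero terms have norms $|a_i|\cdot\infn{v_i}$. Because the strong triangle inequality \ref{vnormultra} becomes an equality when summands have pairwise distinct norms, at least two of these nonzero norms must coincide: $|a_i|\infn{v_i} = |a_j|\infn{v_j}$ for some $i \neq j$. But then
\begin{equation*}
\infn{v_i}/\infn{v_j} \;=\; |a_j|/|a_i| \;\in\; |\F^\times| \;=\; \twu^{\Z},
\end{equation*}
contradicting the assumption that $\infn{v_i}$ and $\infn{v_j}$ lie in distinct cosets. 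Choosing one representative $v_i$ for each coset $r_i \twu^{\Z}$ gives $m$ linearly independent vectors, hence $m \leqslant \dim V$.

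There is no real obstacle here — the only subtle point is remembering that the ultrametric inequality is an \emph{equality} when the two summands have different norms, which immediately propagates to give the equal-norm conclusion for any dependence relation. Everything else is bookkeeping about cosets of the value group.
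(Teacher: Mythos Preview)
Your argument is correct. The orbit decomposition via \ref{vnormscale} is immediate, and the linear-independence step is the right way to bound the number of orbits; the key fact you invoke---that in an ultrametric sum equal to zero at least two terms must share the same norm---follows exactly as you indicate from the strict form of the triangle inequality.

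The paper itself does not give this argument but simply cites \cite[Prop.~2.1.4/2]{bgr}, which states precisely that the index of $|\F^\times|$ in the value semigroup $\infn{V\setminus\{0\}}$ is at most $\dim V$. Your proof is essentially the proof of that proposition in BGR, so you have supplied the content behind the citation rather than taken a different route. The only cosmetic difference is that BGR phrases the conclusion in terms of the quotient group $\infn{V\setminus\{0\}}/|\F^\times|$, while you (and the paper's statement) translate this into representatives in the fundamental domain $(\twu,\,1]$.
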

\begin{proof}
The set of values $|\F^\times| = \twu^\Z$ is a subgroup of $\R^\times$,
so the claim follows by \cite[Prop. 2.1.4/2]{bgr}.\end{proof}

The ultrametric inequality \ref{vnormultra} and the homogeneity property \ref{vnormscale}
imply that the balls
\begin{equation*}
\Veb{\ra} = \{v \in V, \, \infn{v} \leqslant \ra \}, \quad \ra \in \R_{>0},
\end{equation*}
are $\OF$-submodules of~$V$ such that
\begin{properties}[label=(\ensuremath{\mathrm{B}_{\arabic*}})]
\item\label{ballscale}
$x\,\Veb{\ra} = \Veb{|x|\,\ra}$ for every scalar $x \in \F^\times$,
  
\item\label{ballex}
$\F \otimes_{\OF} \Veb{\ra} = V$.
\end{properties}

\begin{lem}\label{vballindest}
For each pair of real numbers $\ra' > \ra > 0$ the quotient $\OF$-module
$\Veb{\ra'}/\Veb{\ra}$
has finite length.
\end{lem}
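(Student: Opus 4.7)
The plan is to filter $\Veb{\ra'}/\Veb{\ra}$ according to the norm values between $\ra$ and $\ra'$ and analyse each successive quotient. By Lemma~\ref{normvalues} the image of $\infn{\cdot}$ is contained in $\{0\} \cup \bigcup_{i=1}^m r_i \twu^{\Z}$, and since $\twu \in (0,1)$ each orbit $r_i \twu^{\Z}$ meets the compact interval $[\ra,\,\ra']$ in a finite set. Arranging the resulting norm values in $(\ra,\,\ra']$ in increasing order as $s_1 < s_2 < \dotsc < s_N = \ra'$, and setting $s_0 = \ra$, I obtain a finite filtration $\Veb{s_0} \subset \Veb{s_1} \subset \dotsc \subset \Veb{s_N}$, so that it suffices to bound the length of each successive quotient $\Veb{s_{k+1}}/\Veb{s_k}$.

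Next I would show that such a quotient is annihilated by a uniformizer $\ofu$, which turns it into a vector space over the residue field $\OF/\mF$. By property \ref{ballscale} one has $\ofu \cdot \Veb{s_{k+1}} = \Veb{\twu s_{k+1}}$, and I claim $\Veb{\twu s_{k+1}} \subset \Veb{s_k}$. If $\twu s_{k+1} \leqslant s_k$ this inclusion is immediate; otherwise the interval $(s_k,\,\twu s_{k+1}]$ lies strictly inside $(s_k,\,s_{k+1})$ and therefore contains no norm values, which forces $\Veb{\twu s_{k+1}} = \Veb{s_k}$. Either way $\ofu$ annihilates $\Veb{s_{k+1}}/\Veb{s_k}$.

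Finally I would bound $\dim_{\OF/\mF}(\Veb{s_{k+1}}/\Veb{s_k})$ by $\dim_\F V$ via a lifting argument. Given representatives $v_1,\,\dotsc,\,v_d \in \Veb{s_{k+1}}$ of classes independent over $\OF/\mF$, I claim that the $v_j$ are $\F$-linearly independent in $V$. Any nontrivial relation $\sum_j x_j v_j = 0$ with $x_j \in \F$ can be rescaled so that all $x_j \in \OF$ and some $x_j$ is a unit; reducing modulo $\Veb{s_k}$ then yields a nontrivial $\OF/\mF$-linear relation between the images $\bar v_j$, a contradiction. Hence $d \leqslant \dim_\F V$, and summing over the filtration gives $\mathrm{length}_{\OF}(\Veb{\ra'}/\Veb{\ra}) \leqslant N \cdot \dim_\F V < \infty$. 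The delicate step is the identification $\Veb{\twu s_{k+1}} = \Veb{s_k}$ in the second case above, which relies essentially on the discreteness of norm values provided by Lemma~\ref{normvalues}.
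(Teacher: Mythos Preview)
Your argument is correct; the assertion $s_N = \ra'$ is not quite right when $\ra'$ is not itself a norm value, but then $\Veb{\ra'} = \Veb{s_N}$ and the filtration still terminates correctly, so this is harmless.

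The paper, however, takes a different and shorter route. It does not invoke Lemma~\ref{normvalues} at all: instead it reduces to the case $\ra = \ra'\twu^n$ for some $n>0$, where $\Veb{\ra} = \mF^n\,\Veb{\ra'}$ by property~\ref{ballscale}. The ball $\Veb{\ra'}$ is torsion-free with $\F \otimes_{\OF} \Veb{\ra'} = V$ finite-dimensional by~\ref{ballex}, and the paper then cites Lemma~3 of \cite[\S5]{poonen} to conclude that $\Veb{\ra'}/\mF^n\,\Veb{\ra'}$ is finitely generated over $\OF/\mF^n$, hence of finite length. Your approach is more self-contained (no external citation) and produces an explicit length bound $N\cdot\dim_{\F} V$; the paper's is quicker on the page but outsources the key step. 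Incidentally, your argument can be streamlined in the same spirit: filtering by $\ra'\twu^k$ for $k = 0,1,\dotsc,n$ instead of by the actual norm values makes Lemma~\ref{normvalues} unnecessary, and each graded piece is then directly $\Veb{\ra'\twu^k}/\mF\,\Veb{\ra'\twu^k}$, to which your lifting argument applies verbatim.
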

\begin{proof}
It suffices to prove the claim for $\ra = \ra'\twu^n$ with any integer $n>0$.
The $\OF$-module $\Veb{\ra'}$ is torsion-free by construction and has finite rank by Property \ref{ballex}.
We also have $\Veb{\ra} = \mF^n\Veb{\ra'}$ by Property \ref{ballscale}.
Invoking Lemma 3 of \cite[\S5]{poonen} we conclude that the $\OF/\mF^n$-module $\Veb{\ra'}/\Veb{\ra}$ is finitely generated, and thus has finite length. 
\end{proof}

\smallskip
Each norm endows the vector space~$V$ with a topology by means of the fundamental system 
$\{\Veb{\ra}\}_{\ra>0}$.
Lemma \ref{normvalues} implies that this system is the same as the system of open balls $\{ v \in V, \, \infn{v} < \ra \}$.

It is important to note that in general the norm topology on~$V$ can be coarser than the canonical $\mF$-adic topology.
An explicit construction of such norms is given in Example \ref{badnorm} below.
From Lemma \ref{vballindest} we directly have

\begin{cor}\label{topfin}
A norm on $V$ induces the canonical $\mF$-adic topology if and only if for each real number $\ra > 0$
the $\OF$-module $\Veb{\ra}$ is finitely generated.\qed
\end{cor}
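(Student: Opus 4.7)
The plan is to prove both implications by leveraging Lemma~\ref{vballindest} together with the fact that $\OF$ is a discrete valuation ring (in particular, Noetherian), and to translate between ``balls'' and ``$\OF$-lattices'' via properties \ref{ballscale} and \ref{ballex}.

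For the forward implication, I would fix an auxiliary reference $\OF$-lattice: choose any $\F$-basis of $V$ and let $L \subset V$ be its $\OF$-span. The module $L$ is a fundamental open neighborhood of $0$ for the canonical $\mF$-adic topology, so by assumption it is open in the norm topology, meaning $\Veb{\ra_0} \subseteq L$ for some $\ra_0 > 0$. Since $L$ is finitely generated over the Noetherian ring $\OF$, so is the submodule $\Veb{\ra_0}$. Now for an arbitrary $\ra > 0$: if $\ra \leqslant \ra_0$ then $\Veb{\ra} \subseteq \Veb{\ra_0}$ is again finitely generated; if $\ra > \ra_0$ then Lemma~\ref{vballindest} ensures that $\Veb{\ra}/\Veb{\ra_0}$ has finite length, so $\Veb{\ra}$ sits in a short exact sequence of finitely generated $\OF$-modules and is thus finitely generated itself.

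For the reverse implication, assume each $\Veb{\ra}$ is finitely generated over $\OF$. Combined with property \ref{ballex} this says that every ball $\Veb{\ra}$ is an $\OF$-lattice in $V$. Property \ref{ballscale} gives $\Veb{\twu^n \ra} = \mF^n \Veb{\ra}$ for all $n \in \Z$, so the subfamily $\{\mF^n \Veb{\ra}\}_{n \in \Z}$ is cofinal in $\{\Veb{\ra'}\}_{\ra' > 0}$ as $n \to \infty$ and therefore forms a fundamental system of neighborhoods of~$0$ for the norm topology. On the other hand, since $\Veb{\ra}$ is an $\OF$-lattice in $V$, the same family $\{\mF^n \Veb{\ra}\}_{n \in \Z}$ is a fundamental system of neighborhoods of~$0$ for the canonical $\mF$-adic topology. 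Hence the two topologies coincide.

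There is no real obstacle here: the content is packaged into Lemma~\ref{vballindest} and the preceding observation that balls are $\OF$-modules satisfying \ref{ballscale} and \ref{ballex}. The only mildly delicate point is the first step of the forward implication, where one must manufacture some finitely generated reference lattice before invoking Lemma~\ref{vballindest}; this is what the auxiliary module $L$ built from an $\F$-basis accomplishes.
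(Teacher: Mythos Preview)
Your proof is correct and follows the paper's intended approach: the paper states only that the corollary follows directly from Lemma~\ref{vballindest}, and you have spelled out precisely how, using the Noetherian property of~$\OF$ together with properties~\ref{ballscale} and~\ref{ballex}. The auxiliary lattice~$L$ in your forward implication is exactly the missing ingredient needed to anchor the finite-length comparison of Lemma~\ref{vballindest}.
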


\smallskip
To increase flexibility we will need the notion of a \emph{seminorm}
which is a function $\infn{\cdot}\colon V \to \R_{\geqslant 0}$
satisfying the conditions~\ref{vnormultra} and~\ref{vnormscale}. 
The following claim is easy to check:
\begin{lem}
For each seminorm
$\infn{\cdot}\colon V \to \R_{\geqslant 0}$ 
the following holds:
\begin{statements}
\item
The kernel $H = \{ v \in V, \:\infn{v} = 0 \}$ is an $\F$-vector subspace of~$V$.

\item
The map $\infn{\cdot}$ factors through the quotient homomorphism
$V \twoheadrightarrow V/H$ and induces a norm on $V/H$.\qed
\end{statements}
\end{lem}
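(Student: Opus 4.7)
The plan is to verify both statements directly from the defining properties \ref{vnormultra} and \ref{vnormscale} of a seminorm, so there is no real obstacle beyond checking closures; the only subtle point is the need to show that the seminorm is constant on cosets of $H$.

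For statement~(1), I would first observe that $0\in H$ because $\infn{0} = \infn{0 \cdot v} = |0|\cdot\infn{v} = 0$ by~\ref{vnormscale}. Closure under scalar multiplication is immediate from \ref{vnormscale}: if $v\in H$ and $x\in\F$, then $\infn{xv} = |x|\cdot\infn{v} = 0$. Closure under addition follows from the ultrametric inequality~\ref{vnormultra}: for $v,v'\in H$, $\infn{v+v'} \leqslant \max\{\infn{v},\infn{v'}\} = 0$. Thus $H$ is an $\F$-vector subspace.

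For statement~(2), the heart of the argument is that the seminorm is constant on the cosets $v + H$. Indeed, for any $v\in V$ and $h\in H$ we get $\infn{v+h} \leqslant \max\{\infn{v},\infn{h}\} = \infn{v}$ from~\ref{vnormultra}, and symmetrically $\infn{v} = \infn{(v+h) + (-h)} \leqslant \max\{\infn{v+h},\infn{-h}\} = \infn{v+h}$, so $\infn{v+h} = \infn{v}$. Hence $\infn{\cdot}$ factors through $V \twoheadrightarrow V/H$, giving a well-defined function $\overline{\infn{\cdot}}\colon V/H \to \R_{\geqslant 0}$. The conditions \ref{vnormultra} and \ref{vnormscale} for $\overline{\infn{\cdot}}$ are inherited from those for $\infn{\cdot}$ by a direct check on representatives. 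Finally, to see that $\overline{\infn{\cdot}}$ satisfies~\ref{vnormnorm}, note that if $\overline{\infn{v+H}} = 0$ then $\infn{v} = 0$ by definition, so $v\in H$ and $v + H = 0$ in $V/H$. This completes the argument.
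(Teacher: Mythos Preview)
Your proposal is correct. The paper itself omits the proof, introducing the lemma only with the remark that the claim is easy to check and closing the statement with a \textit{qed} symbol; your argument spells out precisely the elementary verifications the paper leaves implicit.
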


\smallskip
Next, let us give a criterion for the norm on $V$ to induce the canonical $\mF$-adic topology.
Let $\Fcpl$ be the $\mF$-adic completion of the field~$\F$ and set $\Vcpl = \Fcpl\otimes_{\F} V$.

\begin{thm}\label{normext}
Each norm on the vector space~$V$ extends uniquely to a seminorm on the $\mF$-adic completion $\Vcpl$,
and such an extension is a norm if and only if the original norm induces the $\mF$-adic topology on~$V$.
\end{thm}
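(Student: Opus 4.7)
The plan is to prove uniqueness via a continuity-and-density argument and to construct the extension as a pointwise limit along $\mF$-adic approximating sequences from $V$. Throughout, I fix an $\F$-basis $e_1,\dotsc,e_n$ of $V$, which is automatically an $\Fcpl$-basis of $\Vcpl$.

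For uniqueness, any seminorm $\mdist$ on $\Vcpl$ satisfies the ultrametric bound $\mdist(\sum y_i e_i) \leqslant \max_i |y_i|\cdot\mdist(e_i)$, and the coordinate maps $\Vcpl \to \Fcpl$ are continuous for the canonical $\mF$-adic topology. Combined with the inequality $|\mdist(w) - \mdist(w')| \leqslant \mdist(w - w')$ (itself a consequence of the ultrametric axiom), this makes $\mdist$ continuous on $\Vcpl$. Since $\F$ is dense in $\Fcpl$, the subspace $V$ is $\mF$-adically dense in $\Vcpl$, so two continuous seminorms that agree on $V$ must coincide.

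For existence, given $w \in \Vcpl$ I pick a sequence $v_k \in V$ with $v_k \to w$ in the $\mF$-adic topology. Writing $v_k = \sum x_i^{(k)} e_i$, the coordinate sequences are Cauchy in $\F$, and the bound $\infn{v_k - v_l} \leqslant \max_i |x_i^{(k)} - x_i^{(l)}|\cdot \infn{e_i}$ shows that $\{v_k\}$ is Cauchy for $\infn{\cdot}$; hence $\{\infn{v_k}\}$ is Cauchy in $\R_{\geqslant 0}$. I define $\mdist(w) := \lim_k \infn{v_k}$. The same inequality yields independence of the approximating sequence, and the ultrametric together with $\F$-homogeneity pass to the limit; $\Fcpl$-homogeneity then follows by approximating scalars $c \in \Fcpl$ by $c_k \in \F$ and applying the limit formula to $c_k v_k$. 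For the equivalence of conditions: if $\mdist$ is a norm on $\Vcpl$ then by the standard fact that every norm on a finite-dimensional vector space over a complete non-Archimedean field induces the canonical topology (cf.~\cite[Ch.~2]{bgr}), the $\mdist$-topology on $\Vcpl$ coincides with the $\mF$-adic one, and restricting to $V$ gives the direct implication. Conversely, if $\infn{\cdot}$ induces the $\mF$-adic topology on $V$ and $\mdist(w) = 0$, then any sequence $v_k \to w$ in $\Vcpl$ has $\infn{v_k} \to 0$, so $v_k \to 0$ in $V$ by hypothesis, and Hausdorffness of $\Vcpl$ forces $w = 0$.

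The main conceptual obstacle is the observation that a sequence in $V$ which is Cauchy in the $\mF$-adic topology is automatically Cauchy for $\infn{\cdot}$, even though the norm topology on $V$ may be strictly coarser than the $\mF$-adic one (as in Example~\ref{badnorm}). This asymmetry is precisely what makes the construction of $\mdist$ well-defined yet lets the extension degenerate to a proper seminorm exactly when the two topologies on $V$ fail to agree.
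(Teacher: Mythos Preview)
Your proof is correct. The paper's own argument takes a more abstract route: rather than constructing the extension pointwise via limits of approximating sequences, it forms the abstract completion of the normed space $(V,\infn{\cdot})$ as in \cite[\S2.1.3]{bgr}, observes (citing \cite[Prop.~2.3.3/6]{bgr}) that the natural map from $\Vcpl$ onto this completion is surjective, and pulls back the norm to obtain the seminorm on $\Vcpl$; the same BGR reference then settles the norm-versus-seminorm dichotomy in one stroke. Your construction is essentially an unpacking of this---you are building that completion by hand and realizing it as a quotient of $\Vcpl$---so the two proofs align conceptually. What your version buys is self-containedness: no appeal to Bourbaki for continuity of seminorms and no reliance on the fine print of \cite[Prop.~2.3.3/6]{bgr}, and the decisive observation that $\mF$-adically Cauchy sequences in $V$ are automatically $\infn{\cdot}$-Cauchy (via the coordinate bound $\infn{v} \leqslant \max_i |x_i|\cdot\infn{e_i}$) is made explicit rather than hidden behind the citations.
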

\begin{proof}
It follows from Proposition~1 of \cite[II, \S1, n${}^\circ$1]{evt} that every seminorm is continuous with respect to the $\mF$-adic topology on $\Vcpl$ and the analytic topology on $\R_{\geqslant0}$.
Since the vector space $V$ is $\mF$-adically dense in $\Vcpl$ we conclude that the sought extension is unique.

Let $\Vmc$ be the completion of the normed vector space $V$ as in \cite[\S2.1.3]{bgr}.
This is a normed $\Fcpl$-vector space by construction.
The proof of \cite[Prop. 2.3.3/6]{bgr} shows that the natural morphism $\Vcpl \twoheadrightarrow \Vmc$ is surjective.
Taking the composition of the norm on $\Vmc$ with the surjection $\Vcpl \twoheadrightarrow \Vmc$ we obtain the desired seminorm on the vector space~$\Vcpl$.

The seminorm on $\Vcpl$ is a norm if and only if the surjection $\Vcpl\twoheadrightarrow\Vmc$ is an isomorphism. According to \cite[Prop. 2.3.3/6]{bgr} the latter happens if and only if the norm topology on $V$ is the canonical $\mF$-adic topology, as claimed.
\end{proof}


\begin{cor}\label{complballft}
Suppose that the field $\F$ is $\mF$-adically complete.
Then the norm topology on $V$ is the canonical $\mF$-adic topology.\qed
\end{cor}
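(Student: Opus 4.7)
The plan is to derive this corollary as a direct application of Theorem \ref{normext}. The key observation is that when $\F$ is $\mF$-adically complete, the completion map $\F \to \Fcpl$ is an isomorphism, and hence the natural map $V \to \Vcpl = \Fcpl\otimes_\F V$ is also an isomorphism of $\F$-vector spaces. Under this identification, the unique extension of the given norm on $V$ to a seminorm on $\Vcpl$ is nothing other than the original norm on $V$ itself.

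From there, the proof is immediate: this extension, being the original norm, vanishes only at the zero vector by property \ref{vnormnorm}, so it is a norm (not merely a seminorm) on $\Vcpl$. Theorem \ref{normext} then tells us that an extension is a norm if and only if the original norm on $V$ induces the canonical $\mF$-adic topology, so the topology induced by $\infn{\cdot}$ on $V$ must coincide with the $\mF$-adic topology.

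No real obstacle arises here, since Theorem \ref{normext} does all the work. The only point that deserves a brief word is to note explicitly the identification $\Vcpl = V$ coming from $\Fcpl = \F$, so that the reader sees why the extension provided by the theorem is simply the original norm; the ``if and only if'' clause of the theorem then delivers the conclusion in one line.
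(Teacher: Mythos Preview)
Your proposal is correct and matches the paper's intended argument: the paper gives no proof at all (the \qed\ immediately follows the statement), treating the corollary as an immediate consequence of Theorem~\ref{normext}. Your spelling-out of the identification $\Vcpl = V$ and the application of the ``if and only if'' clause is exactly the reasoning the reader is expected to supply.
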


\smallskip
The behaviour of $\mF$-adic norms in Theorem \ref{normext} resembles the behaviour of real-valued quadratic forms on a finite-dimensional $\Q$-vector space~$V$.
Every such form $q$ extends uniquely to the real vector space $V_\infty = \R\otimes_{\Q} V$.
If the form~$q$ takes strictly positive values on~$V\setminus\{0\}$ then it is positive-semidefinite, 
and the map $\zlv \mapsto q(\zlv)^{1/2}$ is an archimedean seminorm on~$V_\infty$.
This seminorm is a norm if and only if the form~$q$ is positive-definite.
Comparing to Theorem~\ref{normext} we conclude that
the property of inducing the $\mF$-adic topology 
can be understood as a form of positive-definiteness for $\mF$-adic norms.

As an offshoot of Theorem~\ref{normext} we get a description of all norms that fail to induce the $\mF$-adic topology on~$V$:

\begin{ex}\label{badnorm}
Suppose that $\dim V \geqslant 2$, and that the field $\F$ is not $\mF$-adically complete.
Pick a nonzero subspace $H \subset \Vcpl$ which is totally irrational in the sense that $H \cap V = \{0\}$.
Pick a norm on the quotient $\Vcpl/H$.
The natural morphism $V\hookrightarrow \Vcpl/H$ is injective
by construction, so we obtain a norm on~$V$ by composition
with the chosen norm on~$\Vcpl/H$.

Theorem~\ref{normext} implies that the norm toplogy on $V$ is not $\mF$-adic for otherwise the subspace~$H$ will be zero.
It follows by Corollary \ref{topfin} that the balls $\Veb{\ra}$ are not finitely generated as $\OF$-modules.
Theorem~\ref{normext} also implies that varying the choices of~$H$ and of the norm on $\Vcpl/H$
one obtains every norm on~$V$ that does not induce the $\mF$-adic topology.
\end{ex}

\subsection{The setting}\label{coeffring}

From now on we fix a global function field $\F$ and a place $\infty$ of~$\F$.
We will use the following notation:
\begin{itemize}
\item
$\fco \subset \F$ is the ring of elements which are regular at all places,

\item
$A \subset \F$ is the ring of elements which are regular outside~$\infty$,

\item
$\OFinf \subset \F$ is the ring of elements which are regular at~$\infty$,

\item
$\Finf$ is the $\infty$-adic completion of~$\F$,

\item
$\kinf$ is the residue field of $\Finf$ (and of $\OFinf$),

\item
$\twc = |\kinf|$ is the cardinality of this residue field.
\end{itemize}

Fix an $\infty$-adic absolute value
$|\cdot|_\infty\colon \Finf \to \R_{\geqslant 0}$
such that
$|\ofu|_\infty^{-1} = \twc$
for a uniformizer $\ofu\in\Finf$.
Although our theory works with any normalization,
this particular choice 
leads to better-looking formulas.

The ring $\fco$ is a finite field, called the \emph{field of constants} of~$\F$.
The ring $A$ is a Dedekind domain of finite type over~$\fco$. 
We will refer to $A$ as the \emph{coefficient ring}
and it will serve us as an analog of the ring of integers~$\Z$.
The local field $\Finf$ will play the role of the field of real numbers~$\R$.

\subsection{Lattices}\label{ilattices}
Let $\Lat$ be a finitely generated projective $A$-module.
%
An $\infty$-adic \emph{norm} on $\Lat$ is a function
$\infn{\cdot}\colon \Lat \to \R_{\geqslant 0}$
such that:
\begin{properties}[label=($\Lambda_{\arabic*}$)]
\item\label{latdzero}
$\infn{\lv} = 0$ if and only if $\lv = 0$,

\item\label{latdultra}
$\infn{\lv + \lv'} \leqslant \max\{ \infn{\lv}, \:\infn{\lv'} \}$,

\item\label{latdscale}
$\infn{a\lv} = |a|_\infty \, \infn{\lv}$ for all $a \in A$.
\end{properties}
%
Following the construction in \cite[\S2.1.3]{bgr} every such norm 
extends uniquely to an $\infty$-adic norm on the rational vector space
\begin{equation*}
\Lrat = F \otimes_A \Lat.
\end{equation*}
We will apply the considerations of \S\ref{vecnorms} to 
the normed vector space $\Lrat$.
By analogy with \S\ref{vecnorms} we define the following subsets of~$\Lat$:
\begin{equation*}
\Lab{\ra} = \Latb{\ra}, \quad \ra \in \R_{>0}.
\end{equation*}
This time $\Lab{\ra}$ is merely a $\fco$-vector space.

\begin{lem}\label{ballindest}
For each pair of real numbers $\ra' > \ra > 0$ 
the subspace $\Lab{\ra}$ has finite index in $\Lab{\ra'}$.
\end{lem}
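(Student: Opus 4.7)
The plan is to reduce the claim to Lemma \ref{vballindest} by viewing $\Lat$ inside the rational vector space $\Lrat = F \otimes_A \Lat$. First I would extend the given norm on $\Lat$ uniquely to $\Lrat$ and apply the framework of \S\ref{vecnorms} with $\F$ equipped with its $\infty$-adic absolute value, so that the valuation ring is $\OFinf$ and the balls $\Veb{\ra} \subset \Lrat$ are $\OFinf$-submodules. By construction $\Lab{\ra} = \Lat \cap \Veb{\ra}$, so the composition $\Lab{\ra'} \hookrightarrow \Veb{\ra'} \twoheadrightarrow \Veb{\ra'}/\Veb{\ra}$ has kernel exactly $\Lab{\ra}$, giving an injection of $\fco$-vector spaces
\begin{equation*}
\Lab{\ra'}/\Lab{\ra} \hookrightarrow \Veb{\ra'}/\Veb{\ra}.
\end{equation*}

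Next, Lemma \ref{vballindest} asserts that $\Veb{\ra'}/\Veb{\ra}$ has finite length as an $\OFinf$-module. Since the residue field $\kinf$ of $\OFinf$ is finite of cardinality $\twc$, every simple $\OFinf$-module is finite as a set, and an induction on the composition length shows that $\Veb{\ra'}/\Veb{\ra}$ is itself finite. Consequently the embedded subgroup $\Lab{\ra'}/\Lab{\ra}$ is finite, which is what we want (and, $\fco$ being a finite field, is equivalent to finite-dimensionality over $\fco$). That $\Lab{\ra}$ is indeed a $\fco$-subspace of $\Lab{\ra'}$ is immediate from axiom \ref{latdscale}, because every nonzero constant has $\infty$-adic absolute value~$1$.

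I do not expect a substantive obstacle: the argument is essentially a transcription of Lemma \ref{vballindest} into the language of $A$-lattices. The only step that warrants a brief justification is the passage from ``finite length over $\OFinf$'' to ``finite as a set,'' which uses specifically the finiteness of $\kinf$; this is precisely the function-field analogue of the classical fact that a bounded region of $\R^n$ meets a $\Z$-lattice in finitely many points.
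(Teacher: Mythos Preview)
Your proof is correct and follows essentially the same route as the paper: both intersect $\Lat$ with the balls in $\Lrat$, obtain the injection $\Lab{\ra'}/\Lab{\ra} \hookrightarrow \Lrb{\ra'}/\Lrb{\ra}$, and invoke Lemma~\ref{vballindest}. The only difference is that you spell out the passage from ``finite length over $\OFinf$'' to ``finite cardinality'' via finiteness of $\kinf$, which the paper leaves implicit.
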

\begin{proof}
By construction
$\Lab{s} = \Lat \cap \Lrb{s}$ for all $s > 0$. We thus have an inclusion of
$\fco$-vector spaces
\begin{equation*}
\frac{\Lab{\ra'}}{\Lab{\ra}} \hookrightarrow \frac{\Lrb{\ra'}}{\Lrb{\ra}},
\end{equation*}
and the claim follows from Lemma~\ref{vballindest}.
\end{proof}

We are ready for the main definition of this section:

\begin{dfn}\label{lattice}
An \emph{$A$-lattice} is a finitely generated projective $A$-module $\Lat$
supplied with an $\infty$-adic norm $\infn{\cdot}$ that induces the discrete topology on $\Lat$.
\end{dfn}

In other words there is a real number $\varepsilon>0$ such that
every nonzero lattice vector $\lv$ satisfies
$\infn{\lv} \geqslant \varepsilon$.
By Lemma~\ref{ballindest} this holds if and only if the subsets $\Latb{\ra}$
are finite for every $\ra > 0$.

A norm can induce a non-discrete topology on $\Lat$. 
All such norms are described in Example~\ref{badnorm} above.

The discreteness condition of Definition~\ref{lattice}
can be seen as a function field analog of positive-definiteness. 
Let $q$ be a real-valued quadratic form on
a finitely generated free $\Z$-module~$\ZLat$.
Suppose that the form $q$ takes strictly positive values on~$\ZLat \setminus \{0\}$.
Then~$q$ is positive-semidefinite as a quadratic form on the real vector space $\R\otimes_\Z \ZLat$,
and is positive-definite if and only if for each real number $\ra > 0$
the set $\{\zlv \in \ZLat, \,q(\zlv) \leqslant \ra \}$ is finite,
cf. Lemma~9.5 of \cite[VIII.9]{silverman}.

\begin{thm}\label{normex}
Let $\Lat$ be a lattice.
Then the norm on~$\Lat$ extends to a unique $\infty$-adic norm on the vector space $\Lcpl = \Finf\otimes_A \Lat$.
\end{thm}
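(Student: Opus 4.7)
The plan is to apply Theorem~\ref{normext} to the rational vector space $\Lrat = F\otimes_A\Lat$ equipped with its canonical extension of the lattice norm (as noted after Definition~\ref{lattice}). This produces a unique seminorm on the completion $\Lcpl = \Finf\otimes_F\Lrat$, yielding at once the uniqueness part of the theorem. What remains is to verify that this seminorm is a genuine norm, which by Theorem~\ref{normext} is equivalent to showing that the norm on $\Lrat$ induces the canonical $\infty$-adic topology.

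I argue by contradiction. Suppose the extended seminorm on $\Lcpl$ has nonzero kernel $H$; by construction $H$ is an $\Finf$-subspace of $\Lcpl$ (compare Example~\ref{badnorm}). Axiom~\ref{latdzero} forces $H\cap\Lrat = \{0\}$, so the composite $\Lat \hookrightarrow \Lrat \twoheadrightarrow \Lcpl/H$ is injective. The induced norm on $\Lcpl/H$ is a bona fide norm on a complete $\Finf$-vector space, so Corollary~\ref{complballft} identifies its norm topology with the canonical $\infty$-adic one; pulling this topology back along $\Lat \hookrightarrow \Lcpl/H$ returns the original norm topology on $\Lat$, which is discrete by the lattice hypothesis. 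Thus $\Lat$ sits as a discrete $A$-submodule of an $\Finf$-vector space of dimension $n - \dim_\Finf H < n$, where $n = \rank_A\Lat = \dim_\Finf\Lcpl$.

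The crux is to rule out this configuration. I invoke the Dirichlet--Minkowski principle for function fields: \emph{any discrete $A$-submodule of a finite-dimensional $\Finf$-vector space $W$ has $A$-rank at most $\dim_\Finf W$.} This follows from simultaneous Dirichlet approximation over $A$, which is available because $\Finf/A$ is compact (a consequence of Riemann--Roch): given $m+1$ $A$-independent vectors in an $m$-dimensional $W$, a nontrivial $\Finf$-linear dependence among them can be approximated by $A$-coefficients to produce nonzero $A$-combinations of arbitrarily small $\infty$-adic norm, contradicting discreteness. Applied to our situation, this forces $\dim_\Finf H = 0$, so the extended seminorm is in fact a norm, and the theorem follows.
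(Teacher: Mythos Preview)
Your proof is correct and follows essentially the same route as the paper's: extend to a seminorm on $\Lcpl$ via Theorem~\ref{normext}, let $H$ be its kernel, embed $\Lat$ discretely into the normed $\Finf$-space $\Lcpl/H$ (using Corollary~\ref{complballft} to identify the norm topology with the $\infty$-adic one), and then invoke the rank bound $\rank_A\Lat \le \dim_{\Finf}(\Lcpl/H)$ to force $H=0$. The only difference is cosmetic: the paper asserts the rank inequality without comment, whereas you supply a Dirichlet-approximation justification using the compactness of $\Finf/A$.
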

\begin{proof}
We have observed above that the norm on~$\Lat$ extends uniquely
to an $\infty$-adic norm on the rational vector space~$\Lrat = \F\otimes_A \Lat$.
By Theorem~\ref{normext}
the norm on~$\Lrat$ extends to a unique seminorm on~$\Lcpl$.
Let $H$ be the kernel of this seminorm 
and consider the quotient vector space $\Lcplq = \Lcpl/H$.
The seminorm on $\Lcpl$ is the composite of the surjection
$\Lcpl \twoheadrightarrow \Lcplq$ and a norm on~$\Lcplq$.

The lattice $\Lat$ injects into the quotient vector space $\Lcplq$.
By assumption the sets $\Lab{\ra}$ are finite for all $\ra > 0$
so $\Lat$ is discrete in $\Lcplq$ with respect to the norm topology.
As the field $F_\infty$ is $\infty$-adically complete it follows by Corollary \ref{complballft}
that the norm topology on $W$ is the canonical $\infty$-adic topology.
The fact that $\Lat$ is discrete in $W$ thus implies that
$\rank \Lat \leqslant \dim \Lcplq$.
As $\rank\Lat = \dim \Lcpl$ we deduce that $H = 0$.
\end{proof}

\begin{cor}\label{latnormft}
Let $\Lat$ be a lattice, and consider the rational vector space $V = F\otimes_A \Lat$ with the induced norm.
Then for each real number $\ra > 0$ the ball $\Lrb{\ra}$ is finitely generated as a module over the ring~$\OFinf$.
\end{cor}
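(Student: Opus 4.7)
The plan is as follows: first extend the norm to $\Lcpl := \Finf \otimes_A \Lat$ via Theorem~\ref{normex}, then invoke that the extended ball $\Lcb{\ra}$ is a finitely generated module over the valuation ring $\cO$ of $\Finf$, and finally descend to $\OFinf$ using the identity $\OFinf = \F \cap \cO$ inside $\Finf$.

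After extending the norm, one has $\Lrb{\ra} = \Lrat \cap \Lcb{\ra}$ inside $\Lcpl$. Since $\Finf$ is complete, Corollary~\ref{complballft} shows that the norm topology on $\Lcpl$ coincides with the canonical $\infty$-adic topology, and then Corollary~\ref{topfin} ensures $\Lcb{\ra}$ is finitely generated over $\cO$. Being torsion-free, spanning $\Lcpl$ over $\Finf$ by property~\ref{ballex}, and sitting over a DVR, $\Lcb{\ra}$ is in fact free of rank $n = \rank_A \Lat$.

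Next I would produce an $\cO$-basis of $\Lcb{\ra}$ whose elements already lie in $\Lrat$. Since $\F$ is dense in $\Finf$, the subspace $\Lrat$ is dense in $\Lcpl$, so I can approximate a given $\cO$-basis $w_1,\dots,w_n$ of $\Lcb{\ra}$ by vectors $v_1,\dots,v_n \in \Lrat$ with $v_i - w_i \in \ofu\Lcb{\ra}$ for a uniformizer $\ofu$. The transition matrix $I + \ofu N$ is invertible over $\cO$, so Nakayama implies the $v_i$ still form an $\cO$-basis of $\Lcb{\ra}$; in particular $v_i \in \Lrat \cap \Lcb{\ra} = \Lrb{\ra}$.

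Then any $v \in \Lrb{\ra}$ admits a unique expansion $v = \sum c_i v_i$ with $c_i \in \cO$, coming from the $\cO$-basis of $\Lcb{\ra}$; on the other hand, the $v_i$ are $\F$-linearly independent in $\Lrat$, so from $v\in\Lrat$ the coefficients $c_i$ must already lie in $\F$. Hence $c_i \in \F \cap \cO = \OFinf$, whence $\Lrb{\ra} = \OFinf v_1 + \cdots + \OFinf v_n$ is free of rank $n$ over $\OFinf$. The main obstacle in this plan is the approximation step: a naive $\cO$-basis of $\Lcb{\ra}$ need not have any rational representative in $\Lrat$, and it is the $\ofu$-adic Nakayama trick that converts a close-enough rational approximation into a genuine $\cO$-basis, which is precisely what makes the final descent to $\OFinf$ work cleanly.
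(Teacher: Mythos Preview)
Your argument is correct, but it takes a longer detour than the paper's proof. The paper observes that Theorem~\ref{normext} is a biconditional: once Theorem~\ref{normex} guarantees that the norm on $V$ extends to an honest \emph{norm} (not merely a seminorm) on $\Lcpl$, Theorem~\ref{normext} immediately tells you that the norm topology on $V$ is the canonical $\infty$-adic topology. At that point Corollary~\ref{topfin}, applied directly to $V$ over $F$ (where the role of $\OF$ is played by $\OFinf$), gives finite generation of $\Lrb{\ra}$ in one stroke.

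By contrast, you pass to the completion, apply Corollary~\ref{topfin} to $\Lcpl$ over $\Finf$ to get finite generation of $\Lcb{\ra}$ over the complete DVR $\cO$, and then perform an explicit descent: approximate an $\cO$-basis of $\Lcb{\ra}$ by rational vectors via density and Nakayama, and use $\F\cap\cO=\OFinf$ to conclude. This works and even yields a bit more (an explicit free $\OFinf$-basis of $\Lrb{\ra}$ lifted from $\Lcb{\ra}$), but the approximation/Nakayama step and the descent are unnecessary once you exploit the reverse implication in Theorem~\ref{normext}.
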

\begin{proof}
The norm on $V$ extends to a norm on $V_\infty$ by Theorem \ref{normex},
so invoking Theorem \ref{normext} we derive that the norm topology on~$V$ is the canonical $\infty$-adic topology.
The claim then follows by Corollary \ref{topfin}.
\end{proof}

\subsection{Lattices and vector bundles}

We shall relate lattices to vector bundles on the smooth projective curve $X$ over $\Spec \fco$
which compactifies the affine curve $Y = \Spec A$.
By construction we have $X\setminus Y = \{\infty\}$.
Let us denote by $\AtwI$ the ideal sheaf of the
reduced closed subscheme $\{\infty\} \subset X$.

Pick a lattice $\Lat$, set
$\Lrat = \F\otimes_A \Lat$,
and let $\Ugr$ be the locally free sheaf on the curve~$Y$ induced by the $A$-module~$\Lat$.
Let $\ra > 0$ be a real number.

\begin{dfn}
The quasi-coherent sheaf~$\Vgr$ on the curve~$X$
is constructed by gluing the locally free sheaf~$\Ugr$
to the $\OFinf$-module $\Lrb{\ra}$
via the canonical isomorphism
\begin{equation*}
\F\otimes_A \Lat \isoarrow V \isoarrow \F\otimes_{\OFinf} \Lrb{\ra}.
\end{equation*}
\end{dfn}

\begin{thm}\label{latbun}%
The sheaf $\Vgr$ has the following properties:
\begin{statements}
\item\label{latbunbun}
The sheaf $\Vgr$ is locally free of finite rank.

\item\label{latbunglob}
$\Ho^0(X,\:\Vgr) = \Latb{\ra}$.

\item\label{latbuntwist}
For every $i \in \Z$ we have $\Etwn{\Vgr}{i} = \Vgrb{\ra\twc^i}$.
\end{statements}
\end{thm}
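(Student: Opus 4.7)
My plan is to analyze $\Vgr$ stalk by stalk via its gluing description: on $Y$ it is the locally free sheaf $\Ugr$ of rank $r := \rank_A \Lat$, and at $\infty$ its stalk is the $\OFinf$-module $\Lrb{\ra}$. The crucial input is Corollary~\ref{latnormft}: $\Lrb{\ra}$ is finitely generated over $\OFinf$, and since $\OFinf$ is a DVR and $\Lrb{\ra} \subset \Lrat$ is torsion-free, $\Lrb{\ra}$ is in fact free. Its rank equals $\dim_\F \Lrat = r$ because $\F \otimes_{\OFinf} \Lrb{\ra} = \Lrat$ by property~\ref{ballex}. This proves~\ref{latbunbun}: both gluing pieces are free of rank $r$, so $\Vgr$ is locally free of rank $r$ on~$X$.

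For~\ref{latbunglob}, the gluing description identifies the global sections of $\Vgr$ with the intersection $\Gamma(Y, \Ugr) \cap \Lrb{\ra} = \Lat \cap \Lrb{\ra}$ taken inside $\Lrat$, which is by definition $\Lab{\ra} = \Latb{\ra}$. For~\ref{latbuntwist}, twisting by $\Atwn{i} = \Xstr(i\infty)$ leaves the restriction to $Y$ untouched because $\infty \notin Y$; at $\infty$ it tensors $\Lrb{\ra}$ with $\ofu^{-i}\OFinf$, where $\ofu$ is a uniformizer at $\infty$. By property~\ref{ballscale}, together with $|\ofu|_\infty^{-1} = \twc$, this gives $\ofu^{-i}\Lrb{\ra} = \Lrb{\twc^i \ra}$, and feeding this back into the gluing recipe produces $\Vgrb{\ra \twc^i}$.

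The main thing to justify carefully is the foundational gluing: $Y$ together with $\{\infty\}$ is not literally an open cover of $X$, so the construction of $\Vgr$ from $\Ugr$ and the $\OFinf$-module $\Lrb{\ra}$ along their common generic fiber $\Lrat$ needs a rigorous basis. I would either invoke Beauville--Laszlo style gluing along the local ring $\OFinf = \Xinf$, or pass to a genuine affine open neighborhood $U = \Spec B \subset X$ of $\infty$, spread $\Lrb{\ra}$ out to a finitely generated $B$-submodule $N \subset \Lrat$ whose localization at the maximal ideal of $B$ at $\infty$ recovers $\Lrb{\ra}$ and whose restriction to $U \cap Y$ equals $\Ugr|_{U \cap Y}$, and then glue $\Ugr$ with $\widetilde{N}$. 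Once this point is settled, the three statements follow from the computations sketched above.
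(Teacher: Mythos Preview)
Your proof is correct and follows the same line as the paper's: invoke Corollary~\ref{latnormft} for~\ref{latbunbun}, read off~\ref{latbunglob} from the gluing description, and use the scaling identity $\ofu^{-i}\Lrb{\ra} = \Lrb{\ra\twc^i}$ for~\ref{latbuntwist}. The paper's proof is terser (it simply says ``Property~\ref{latbunbun} follows from Corollary~\ref{latnormft} and Property~\ref{latbunglob} is immediate''), but the content is the same; your extra care in checking that $\Lrb{\ra}$ is free of the correct rank over the DVR $\OFinf$ is a welcome elaboration. Your foundational worry about gluing along $Y$ and $\Spec\OFinf$ is legitimate but is treated by the paper as part of the definition of $\Vgr$ rather than something to be proved here; the spreading-out argument you sketch is the standard way to make this rigorous and would serve perfectly well.
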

\begin{proof}
Property~\ref{latbunbun} follows from Corollary~\ref{latnormft}
and Property~\ref{latbunglob} is immediate. 
To show Property~\ref{latbuntwist} note that the locally free sheaf
$\Etwn{\Vgr}{i}$ is obtained by gluing the sheaves induced by the modules $\Lat$ and $\ofu^{-i} \Lrb{\ra}$
with $\ofu$ a uniformizer of $\OFinf$.
The claim follows since
$\ofu^{-i} \Lrb{\ra} = \Lrb{\ra\twc^i}$.
\end{proof}

Combining Theorem \ref{latbun} with Lemma \ref{normvalues} we obtain a correspondence between lattices and systems of vector bundles:

\begin{cor}
There is a one-to-one correspondence between $A$-lattices and
pairs which consist of
(i) a strictly increasing chain of locally free sheaves 
\begin{equation*}
\cE_1 \subset \dotsc \subset \cE_m \subset \Etwn{\cE_1}{}
\end{equation*}
on the complete curve~$X$ such that
$\cE_i|_Y = \cE_{i+1}|_Y$ for all $i \in \{1, \dotsc, m -1 \}$,
and
(ii) a sequence of real numbers
\begin{equation*}
\{ r_1 < \dotsc < r_m \} \subset (\twc^{-1}, 1].
\end{equation*}
\end{cor}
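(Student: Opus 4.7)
The forward map sends an $A$-lattice $\Lat$ to the pair built as follows. Apply Lemma~\ref{normvalues} to the normed $\F$-vector space $\Lrat = \F \otimes_A \Lat$ to obtain the real numbers $r_1 < \dotsc < r_m$ in $(\twc^{-1}, 1]$, and set $\cE_i := \Vgrb{r_i}$. By Theorem~\ref{latbun}\ref{latbunbun} each $\cE_i$ is locally free of finite rank; by construction all $\cE_i$ agree on $Y$ with the sheaf $\Ugr$ associated to the $A$-module $\Lat$, so $\cE_i|_Y = \cE_{i+1}|_Y$. The chain $\cE_1 \subset \dotsc \subset \cE_m$ is strictly increasing because the balls $\Lrb{r_i}$ are, the $r_i$ being distinct attained norm values, and $\cE_m \subset \Etwn{\cE_1}{}$ follows from $r_m \leqslant 1 < r_1 \twc$ together with Theorem~\ref{latbun}\ref{latbuntwist}, which identifies $\Etwn{\cE_1}{}$ with $\Vgrb{r_1 \twc}$.

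For the inverse, given a pair $(\cE_1 \subset \dotsc \subset \cE_m \subset \Etwn{\cE_1}{},\, r_1 < \dotsc < r_m)$, extend the chain to a bi-infinite one by $\cE_{i+jm} := \Etwn{\cE_i}{j}$ for $j \in \Z$ and $i \in \{1,\dotsc,m\}$; the hypothesis $\cE_m \subset \Etwn{\cE_1}{}$ makes this compatible with the original chain. Every member restricts on $Y$ to the same locally free sheaf $\Ugr$, so the underlying $A$-module is defined to be $\Lat := \Ho^0(Y, \Ugr)$, which is finitely generated projective. Set $\infn{0} := 0$ and for $\lv \in \Lat \setminus \{0\}$ put
\begin{equation*}
\infn{\lv} := \min\bigl\{ r_i \twc^j : i \in \{1,\dotsc,m\},\: j \in \Z,\: \lv \in \Ho^0(X, \Etwn{\cE_i}{j}) \bigr\}.
\end{equation*}
The minimum is attained because the set is nonempty (any section of $\Ugr$ extends to a section of some sufficiently large twist) and bounded below (for $j$ sufficiently negative, the sheaf $\Etwn{\cE_m}{j}$ has negative degree and admits no nonzero global sections). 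Axiom~\ref{latdzero} is immediate, \ref{latdultra} follows from additivity of $\Ho^0$, and \ref{latdscale} reduces to the identification $a \cdot \Etwn{\cE_i}{j} \subset \Etwn{\cE_i}{j - v_\infty(a)}$ combined with $|a|_\infty = \twc^{-v_\infty(a)}$. Discreteness is automatic, since each $\Ho^0(X, \cE)$ is finite-dimensional over the finite field $\fco$.

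That the two constructions are mutually inverse is formal: for a lattice $\Lat$, Theorem~\ref{latbun}\ref{latbunglob} together with \ref{latbuntwist} gives $\Ho^0(X, \Etwn{\cE_i}{j}) = \Latb{r_i \twc^j}$, so the reconstructed norm takes exactly the original values; conversely, applying the backward map to $\cE_i = \Vgrb{r_i}$ returns the same $A$-module and the same norm by construction. The main obstacle is the backward direction — specifically, showing that the defining minimum is actually attained and positive, and reconciling the scaling axiom with the $A$-module structure on $\Ho^0(Y, \Ugr)$ via a careful match between the twist index $j$ and the $\infty$-adic valuation $v_\infty(a)$. Once this sheaf-theoretic bookkeeping on the projective curve $X$ is completed, the bijection is immediate from the three items of Theorem~\ref{latbun}.
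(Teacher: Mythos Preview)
Your proof is correct and follows the same approach as the paper. The paper itself only sketches the backward construction (defining $\infn{\lv}$ as an infimum over the filtration by $\Ho^0(X,\Etwn{\cE_i}{j})$ and asserting the lattice axioms without further comment), whereas you spell out both directions and identify the two genuine verification points: that the infimum is an attained minimum, and that the scaling axiom~\ref{latdscale} holds. One small remark: your justification of~\ref{latdscale} via the inclusion $a\cdot\Etwn{\cE_i}{j}\subset\Etwn{\cE_i}{(j-v_\infty(a))}$ literally yields only $\infn{a\lv}\le|a|_\infty\infn{\lv}$; the reverse inequality needs the observation that multiplication by~$a$ is invertible on stalks at~$\infty$ (with inverse shifting by $+v_\infty(a)$), which combined with $\lv\in\Lat$ on~$Y$ gives the matching lower bound. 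This is implicit in what you wrote but worth making explicit.
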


The $A$-module $\Lat = \Ho^0(Y,\,\cE_i)$ is finitely generated projective and is independent of the choice of~$i$.
It carries a filtration by $\fco$-vector spaces
$\Ho^0(X,\,\Etwn{\cE_i}{j})$ with $i \in \{1,\dotsc,m\}$ and $j \in \Z$.
For each element $\lv \in \Lat$
we set
\begin{equation*}
\infn{\lv} = \inf \{r_i \twc^j \:|\: \lv \in \Ho^0(X,\,\Etwn{\cE_i}{j})\}.
\end{equation*}
It is easy to check that the function $\infn{\cdot}$ is a norm on $\Lat$.
The normed module $\Lat$ is a lattice since the vector spaces
$\Ho^0(X,\,\Etwn{\cE_i}{j})$ are finite-dimensional.

%

\subsection{The volume}

Let $\Lat$ be a lattice, and consider the vector space $\Lcpl = \Finf \otimes_A \Lat$ with the induced norm.
By Corollary \ref{complballft} the norm topology on $V_\infty$ is the canonical $\infty$-adic topology.
For each real number $\ra > 0$ the ball $\Lcb{\ra}$ is thus compact and open, and we can make the following definition:

\begin{dfn}
The $\ra$-normalized \emph{volume} of $\Lat$ is defined by the formula
\begin{equation*}
\volb = \mu(V_\infty/\Lat)
\end{equation*}
where $\mu$ is the unique translation-invariant Haar measure on~$V_\infty$
satisfying
$\mu(\Lcb{\ra}) = 1$. 
In the case $\ra = 1$ we will use a simplified notation:
\begin{equation*}
\volu = \volx{1}.
\end{equation*}
\end{dfn}

Denoting by~$n$ the rank of the lattice~$\Lat$
we have an equality for every integer~$i$:
\begin{equation*}
\volx{\ra\twc^i} = \twc^{-ni} \, \volb.
\end{equation*}
This results from the fact that
$\mu(\Lcb{\ra\twc^i}) = \twc^{ni} \,\mu(\Lcb{\ra})$
for every translation-invariant Haar measure $\mu$. 

In the following we denote by $\chi$ the Euler characteristic of coherent sheaves
on the curve~$X\to\Spec\fco$.

\begin{lem}\label{voleuler}
$\volb = {\cco}^{-\chi(\Vgr)}$.
\end{lem}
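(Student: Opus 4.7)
The plan is to compute $\volb = \mu(\Lcpl/\Lat)$ by exhibiting a fundamental domain for $\Lat$ in $\Lcpl$ as a disjoint union of $|\Ho^1(X,\Vgr)|$ translates of a fundamental domain for the finite group $\Lat \cap \Lcb{\ra}$ acting on the compact open ball $\Lcb{\ra}$.

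First I set $M := \Lcb{\ra}$, which by Corollary~\ref{complballft} is compact and open in $\Lcpl$ and has $\mu(M) = 1$ by the chosen normalization. By Theorem~\ref{latbun}~\ref{latbunglob} the intersection $\Lat \cap M$ equals $\Latb{\ra} = \Ho^0(X,\Vgr)$, a finite-dimensional $\fco$-vector space; write $h^0 = \dim_{\fco} \Ho^0(X,\Vgr)$, so $|\Lat \cap M| = \cco^{h^0}$. The key ingredient is the Mayer--Vietoris exact sequence
\begin{equation*}
0 \to \Ho^0(X,\Vgr) \to \Lat \oplus M \to \Lcpl \to \Ho^1(X,\Vgr) \to 0,
\end{equation*}
associated to the cover of $X$ by $Y$ and a formal neighborhood of $\infty$; the middle map sends $(\lv,m) \mapsto \lv - m$. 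In particular $\Ho^1(X,\Vgr) \cong \Lcpl/(\Lat + M)$, and since $X$ is projective over $\Spec \fco$ this is a finite-dimensional $\fco$-vector space; write $h^1 = \dim_{\fco} \Ho^1(X,\Vgr)$.

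Second, I carry out the volume computation. The subgroup $\Lat + M$ is open in $\Lcpl$ (as a union of translates of the open set $M$), so the discrete quotient $\Lcpl/(\Lat + M) \cong \Ho^1(X,\Vgr)$ has cardinality $\cco^{h^1}$. A fundamental domain for $\Lat$ in $\Lcpl$ may therefore be constructed as the union, over representatives $\{c_i\}$ of the $\cco^{h^1}$ cosets of $\Lat + M$, of $c_i + F$, where $F \subset M$ is a fundamental domain for the finite subgroup $\Lat \cap M$ acting on $M$ by translation. The measure of $F$ equals $\mu(M)/|\Lat \cap M| = \cco^{-h^0}$, so summing gives
\begin{equation*}
\volb \;=\; \cco^{h^1} \cdot \cco^{-h^0} \;=\; \cco^{h^1 - h^0} \;=\; \cco^{-\chi(\Vgr)},
\end{equation*}
as claimed.

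The main obstacle is the exact sequence displayed above: it is the Čech--Mayer--Vietoris sequence for the affine cover $X = Y \cup U_\infty$ with $U_\infty$ a formal neighborhood of $\infty$, and can be derived from Beauville--Laszlo gluing or the adelic description of sheaf cohomology on a projective curve. One mild technicality is that the construction of $\Vgr$ in the paper uses the stalk $\OFinf$ rather than its $\infty$-adic completion $\widehat{\OFinf}$; a short density argument (using that $\F$ is dense in $\Finf$ and that $\Lrb{\ra} = V \cap \Lcb{\ra}$) identifies $V/(\Lat + \Lrb{\ra})$ with $\Lcpl/(\Lat + \Lcb{\ra})$, so the two viewpoints produce the same $h^1$ and the computation goes through.
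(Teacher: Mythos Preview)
Your proof is correct and follows essentially the same approach as the paper: both compute the cohomology of $\Vgr$ via the two-term \v{C}ech complex $[\Lat \oplus \Lcb{\ra} \xrightarrow{\text{diff}} \Lcpl]$ and then read off the volume as $\cco^{h^1-h^0}$. The paper rewrites this complex as $[\Lcb{\ra} \to \Lcpl/\Lat]$ to obtain the ratio $\mu(\Lcpl/\Lat)/\mu(\Lcb{\ra})$ directly, whereas you phrase the same count via fundamental domains; you are also more explicit than the paper about the passage from $\OFinf$ to its completion, a point the paper leaves implicit.
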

\begin{proof}
The cohomology of the locally free sheaf $\Vgr$ is computed by a \v{C}ech
complex
\begin{equation*}
\begin{tikzcd}[column sep=5em]
\big[ \Lat \oplus \Lcb{\ra} \rar["\textup{difference}"] &\Lcpl \big]
\end{tikzcd}
\end{equation*}
concentrated in degrees~$0$ and~$1$. We rewrite this complex as follows:
\begin{equation*}
\begin{tikzcd}[column sep=5em]
\big[ \Lcb{\ra} \rar &\Lcpl/\Lat \big].
\end{tikzcd}
\end{equation*}
Thus for every translation-invariant Haar measure $\mu$ on $\Lcpl$
we have an equality:
\begin{equation*}
\frac{\mu(\Lcpl/\Lat)}{\mu(\Lcb{\ra})} = \frac{\card{\Ho^1(X,\,\Vgr)}}{\card{\Ho^0(X,\,\Vgr)}}.
\end{equation*}
The claim follows.
\end{proof}

As in the case of $\Z$-lattices we can compute the volume of an $A$-lattice
via the absolute value of a determinant.
In the following formula we treat the respective wedge products
as elements of one-dimensional $\Finf$-vector space $\det(\Lcpl)$.
Their quotient is a well-defined element of $\Finf^\times$.

\begin{lem}\label{voldet}
Pick a basis $\lrv_1, \dotsc, \lrv_n$ of the free module $\Lcb{\ra}$
and pick vectors $\lv_1,\dotsc,\lv_n$ generating a submodule of finite index $e$ in~$\Lat$.
Then
\begin{equation*}
\volb = \frac{{\cco}^{(g-1)n}}{e} \:
\left|\frac{\lv_1 \wedge \dotsc \wedge \lv_n}{\lrv_1 \wedge \dotsc \wedge \lrv_n} \right|_\infty
\end{equation*}
where $g$ is the genus of the coefficient field~$\F$.
\end{lem}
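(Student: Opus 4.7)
\medskip\noindent\textbf{Proof plan for Lemma \ref{voldet}.}

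The plan is to reduce everything to a degree calculation of the determinant line bundle $\det\Vgr$ on $X$ and to carry out the latter by constructing a convenient rational section. First I would combine Lemma \ref{voleuler} with the Riemann--Roch formula $\chi(\Vgr) = \deg(\Vgr) + n(1-g)$ to get
\begin{equation*}
\volb = \cco^{-\chi(\Vgr)} = \cco^{(g-1)n} \cdot \cco^{-\deg(\det\Vgr)}.
\end{equation*}
Thus the lemma will follow once I show that $\cco^{-\deg(\det\Vgr)} = e^{-1}\,|c|_\infty$, where I abbreviate $c := (\lv_1\wedge\dotsc\wedge\lv_n)/(\lrv_1\wedge\dotsc\wedge\lrv_n) \in \Finf^\times$.

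Next I would unpack $\cL := \det\Vgr$ via the gluing construction that produced $\Vgr$: the line bundle $\cL$ is glued from the invertible $A$-module $\det\Lat = \wedge^n_A \Lat$ on $Y$ and the free $\OFinf$-module $\det\Lcb{\ra} = \OFinf(\lrv_1\wedge\dotsc\wedge\lrv_n)$ on an $\infty$-neighborhood, via the canonical identification of both sides after inverting $\mF$. Since the $\lv_i$ are $\F$-linearly independent (their $A$-span has full rank $n$ in $\Lat$), the submodule $A(\lv_1\wedge\dotsc\wedge\lv_n) \subset \det\Lat$ is free of rank $1$. Let $\cL'\subset\cL$ be the sub-line-bundle glued from $A(\lv_1\wedge\dotsc\wedge\lv_n)$ on $Y$ and the same $\OFinf(\lrv_1\wedge\dotsc\wedge\lrv_n)$ near $\infty$; this gluing is consistent because both become the full rational line $\det V$ after inverting the places of $Y$.

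Now I would compute $\deg(\cL')$ using the rational section $s := \lv_1\wedge\dotsc\wedge\lv_n$. On $Y$ this section trivializes $\cL'$, so $\mathrm{div}(s)$ is supported at $\infty$. Near $\infty$ the section equals $c \cdot (\lrv_1\wedge\dotsc\wedge\lrv_n)$, so its order at $\infty$ is $v_\infty(c)$. By our normalization $|\cdot|_\infty = \cco^{-v_\infty(\cdot)}$ we obtain
\begin{equation*}
\deg(\cL') = v_\infty(c), \qquad \cco^{-\deg(\cL')} = |c|_\infty.
\end{equation*}
The remaining step is to compare $\cL'$ and $\cL$: the quotient is the torsion $A$-module $\det\Lat / A(\lv_1\wedge\dotsc\wedge\lv_n)$, supported on $Y$. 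The hard point (really the only non-formal input) is to identify its length with $e$. This is the standard index--determinant identity for projective modules over a Dedekind domain: verified locally at each maximal ideal $\mathfrak{p}$ of $A$ by Smith normal form applied to the finite-index inclusion $A\lv_1 + \dotsc + A\lv_n \hookrightarrow \Lat$ after localizing to $A_\mathfrak{p}$, one gets $[\Lat_\mathfrak{p} : (\textstyle\sum A\lv_i)_\mathfrak{p}] = [\det\Lat_\mathfrak{p} : A_\mathfrak{p}(\lv_1\wedge\dotsc\wedge\lv_n)]$, and assembling over $\mathfrak{p}$ yields $[\det\Lat : A(\lv_1\wedge\dotsc\wedge\lv_n)] = e$. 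Since the degree of a finite $A$-module agrees with $\log_\cco$ of its cardinality, this gives $\deg(\cL/\cL') = \log_\cco(e)$. Putting everything together,
\begin{equation*}
\cco^{-\deg(\cL)} = \cco^{-\deg(\cL')} \cdot \cco^{-\deg(\cL/\cL')} = \frac{|c|_\infty}{e},
\end{equation*}
and multiplying by the factor $\cco^{(g-1)n}$ from Riemann--Roch yields the claimed formula.
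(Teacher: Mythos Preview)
Your approach is the same as the paper's: compute $\deg(\det\Vgr)$ via the rational section $s = \lv_1\wedge\dotsc\wedge\lv_n$, separate the contribution at $\infty$ from the contribution on $Y$ (the latter being $\log_{\cco} e$ by the index--determinant identity you spell out), and conclude via Riemann--Roch and Lemma~\ref{voleuler}. The paper dispenses with the auxiliary sub-line-bundle $\cL'$ and simply reads off $\deg(\operatorname{div}(s))$ directly, but the content is identical.

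There is one slip you should fix. You assert $|\cdot|_\infty = \cco^{-v_\infty(\cdot)}$ and $\deg(\cL') = v_\infty(c)$, but in the paper's normalization (\S\ref{coeffring}) one has $|\cdot|_\infty = \twc^{-v_\infty(\cdot)}$ with $\twc = |\kinf| = \cco^{\rdeg}$ and $\rdeg = [\kinf:\fco]$. Correspondingly the closed point $\infty\in X$ has degree $\rdeg$ over $\Spec\fco$, so the degree contribution of $s$ at $\infty$ is $\rdeg\cdot v_\infty(c)$, not $v_\infty(c)$. These two errors cancel in your conclusion $\cco^{-\deg(\cL')} = |c|_\infty$, which is why you land on the right formula, but both intermediate displayed equalities are false when $\rdeg > 1$. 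The paper accounts for this correctly by writing the contribution at $\infty$ as $[\kinf:\fco]\,v_\infty(c)$.
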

\begin{proof}[Proof of Lemma~\ref{voldet}]
The product $s = \lv_1 \wedge \dotsc \wedge \lv_n$ is a rational
section of the invertible sheaf $\det(\Vgr)$. Its order at the point~$\infty$ equals
\begin{equation*}
\kinfd \: v_\infty\!\!\left(\frac{\lv_1 \wedge \dotsc \wedge \lv_n}{\lrv_1 \wedge \dotsc \wedge \lrv_n} \right).
\end{equation*}
where $v_\infty\colon \Finf^\times\twoheadrightarrow\Z$ is the normalized $\infty$-adic valuation.
Hence
\begin{equation*}
{\cco}^{-\deg(s)} = \frac{1}{e} \cdot 
\left|\frac{\lv_1 \wedge \dotsc \wedge \lv_n}{\lrv_1 \wedge \dotsc \wedge \lrv_n} \right|_\infty.
\end{equation*}
The claim then follows from Lemma~\ref{voleuler} and Riemann--Roch formula.
\end{proof}

\begin{thm}\label{latcount}
Let $\Lat$ be a lattice of rank~$n$ and let $\ra > 0$ be a real number.
Then for all integers $i \gg 0$ we have an equality:
\begin{equation*}
\card{\{\lv \in \Lat, \: \infn{\lv} \leqslant \ra\twc^i\}} = \frac{\cinf^{ni}}{\volb}.
\end{equation*}
Moreover for all integers $i$ we have a lower bound:
\begin{equation*}
\card{\{\lv \in \Lat, \: \infn{\lv} \leqslant \ra\twc^i\}} \geqslant \frac{\cinf^{ni}}{\volb}.
\end{equation*}
\end{thm}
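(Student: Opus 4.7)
The plan is to reduce both statements to facts about the cohomology of the locally free sheaf $\Vgr$ on the projective curve $X$, exploiting the dictionary developed in Theorem~\ref{latbun} and Lemma~\ref{voleuler}.

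First I would rewrite the cardinality in terms of a cohomology dimension. By Theorem~\ref{latbun}\ref{latbuntwist} the twist $\Etwn{\Vgr}{i}$ coincides with $\Vgrb{\ra\twc^i}$, and by part~\ref{latbunglob} of the same theorem
\begin{equation*}
\{\lv\in\Lat,\:\infn{\lv}\leqslant\ra\twc^i\} = \Lab{\ra\twc^i} = \Ho^0(X,\,\Etwn{\Vgr}{i}).
\end{equation*}
Since $X\to\Spec\fco$ is a projective curve over the finite field $\fco$, this is a finite-dimensional $\fco$-vector space, so its cardinality is $\cco^{\dim_\fco\Ho^0(X,\,\Etwn{\Vgr}{i})}$.

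Next I would identify the target expression $\cinf^{ni}/\volb$ with $\cco^{\chi(\Etwn{\Vgr}{i})}$. Applying Lemma~\ref{voleuler} to the sheaf $\Etwn{\Vgr}{i}=\Vgrb{\ra\twc^i}$ gives $\volx{\ra\twc^i}=\cco^{-\chi(\Etwn{\Vgr}{i})}$. Combining this with the scaling identity $\volx{\ra\twc^i}=\twc^{-ni}\volb$ (noted just before Lemma~\ref{voleuler}) yields
\begin{equation*}
\frac{\cinf^{ni}}{\volb} = \frac{\twc^{ni}}{\volb} = \cco^{\chi(\Etwn{\Vgr}{i})}.
\end{equation*}

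With these two identifications in place, the theorem reduces to a comparison between $\dim_\fco\Ho^0(X,\,\Etwn{\Vgr}{i})$ and $\chi(\Etwn{\Vgr}{i}) = \dim\Ho^0 - \dim\Ho^1$. The lower bound is immediate from $\dim\Ho^1 \geqslant 0$, giving $\dim\Ho^0 \geqslant \chi$ and hence $\#\Lab{\ra\twc^i} \geqslant \cinf^{ni}/\volb$ for every integer $i$. For the equality in the first claim I would invoke Serre vanishing on the smooth projective curve $X$: the invertible sheaf $\Etwn{\cO_X}{1}=\cO_X(\infty)$ has positive degree hence is ample, so $\Ho^1(X,\,\Vgr\otimes\cO_X(i\infty)) = \Ho^1(X,\,\Etwn{\Vgr}{i}) = 0$ for all $i\gg 0$. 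In that range $\dim\Ho^0 = \chi$ and the equality follows.

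There is no real obstacle; every ingredient has already been set up, and the argument is a bookkeeping exercise with the scaling formula, Lemma~\ref{voleuler}, and standard Serre vanishing on a projective curve over a finite field.
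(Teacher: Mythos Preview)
Your proposal is correct and follows essentially the same route as the paper: identify $\Lab{\ra\twc^i}$ with $\Ho^0(X,\Etwn{\Vgr}{i})$, identify $\cinf^{ni}/\volb$ with $\cco^{\chi(\Etwn{\Vgr}{i})}$, and then use $\Ho^1\geqslant 0$ for the lower bound and Serre vanishing for the equality at large $i$. The only cosmetic difference is that the paper computes $\chi(\Etwn{\Vgr}{i})-\chi(\Vgr)=ni\cdot[\kinf:\fco]$ via the short exact sequence $0\to\Vgr\to\Etwn{\Vgr}{i}\to\cF\to 0$, whereas you obtain the same relation by applying Lemma~\ref{voleuler} directly to the twisted sheaf and invoking the volume scaling formula; the two computations are equivalent.
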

\begin{proof}
Let $i$ be a non-negative integer.
Consider a natural short exact sequence of sheaves on the curve~$X$:
\begin{equation*}
\begin{tikzcd}
0 \rar & \Vgr \rar & \Etwn{\Vgr}{i} \rar & \cF \rar & 0.
\end{tikzcd}
\end{equation*}
The coherent sheaf $\cF$ is concentrated at the point $\infty$
and has length $n i$ as an $\Xinf$-module. We thus have
an equality
\begin{equation*}
\frac{\card{\Ho^0(X,\:\Etwn{\Vgr}{i})}}{\card{\Ho^1(X,\:\Etwn{\Vgr}{i})}} =
\cinf^{ni} \, \frac{\card{\Ho^0(X,\:\Vgr)}}{\card{\Ho^1(X,\:\Vgr)}}.
\end{equation*}
By construction we have $\Ho^0(X,\:\Etwn{\Vgr}{i}) = \Latb{\ra\twc^i}$.
Hence Lemma~\ref{voleuler} implies that
\begin{equation*}
\frac{\card{\Latb{\ra\twc^i}}}{\card{\Ho^1(X,\:\Etwn{\Vgr}{i})}} =
\frac{\cinf^{ni}}{\volb}.
\end{equation*}
The cohomology group $\Ho^1(X,\:\Etwn{\Vgr}{i})$ vanishes for $i \gg 0$
as the invertible sheaf $\Atw$ is ample. The first claim of the theorem follows.
To prove the second claim note that $\twc^{ni} \,\volb^{-1} = \volx{\ra\twc^i}^{-1}$
for each integer~$i$. It is thus enough to treat the case $i = 0$ which
follows from the displayed formula above.
\end{proof}

\begin{cor}
Every lattice $\Lat$ of rank~$n$ contains a nonzero vector of norm at most
$\twc\cdot{\volu}^{1/n}$.\qed
\end{cor}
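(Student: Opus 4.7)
The plan is to apply the lower bound of Theorem~\ref{latcount} with normalization $\ra = 1$. That bound gives, for every integer $i$,
\begin{equation*}
\card{\{\lv \in \Lat, \: \infn{\lv} \leqslant \twc^i\}} \geqslant \frac{\twc^{ni}}{\volu}.
\end{equation*}
Since the cardinality on the left is a nonnegative integer and always counts the zero vector, producing a nonzero lattice element in the ball of radius $\twc^i$ reduces to finding an $i$ for which the right-hand side is strictly greater than~$1$.

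I would take $i$ to be the smallest integer satisfying $\twc^{ni} > \volu$, equivalently $\twc^i > \volu^{1/n}$. Such an integer exists because $\twc^{ni} \to \infty$ as $i \to \infty$. For this value the theorem furnishes a lattice vector $\lv \ne 0$ with $\infn{\lv} \leqslant \twc^i$. By minimality of $i$ we have $\twc^{i-1} \leqslant \volu^{1/n}$, so $\twc^i \leqslant \twc \cdot \volu^{1/n}$, and combining the two inequalities yields $\infn{\lv} \leqslant \twc \cdot \volu^{1/n}$ as desired.

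There is essentially no obstacle here: the argument is a direct function-field analogue of the classical Minkowski-type deduction of a short vector bound from a volume count, with Theorem~\ref{latcount} playing the role of the volume comparison. The multiplicative loss of $\twc$ in the stated bound reflects precisely the quantization of admissible norm values imposed by Lemma~\ref{normvalues}.
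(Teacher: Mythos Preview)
Your argument is correct and is exactly the intended deduction: the paper gives no proof at all, marking the corollary with an immediate \qed, so the implicit reasoning is precisely the Minkowski-style count from Theorem~\ref{latcount} that you have spelled out.
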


\subsection{A bound on norms of generators}

We would like to find a real number $\ra > 0$ such that
the subset $\Latb{\ra}$ generates the lattice $\Lat$.
To this end we will employ the correspondence between lattices and vector bundles.
We will use supplementary notation:
\begin{itemize}
\item
$g$ is the genus of the smooth projective curve~$X\to\Spec\fco$,

\item
$\rdeg = \kinfd$.
\end{itemize}

The invertible sheaf $\Atw$ has degree $\rdeg$.
Let us also pick an auxilliary integer $\vah$ such that the twist $\Atwn{\vah}$ is very ample.
We will utilize a classical criterion for global generation of coherent sheaves on curves:
\begin{lem}\label{cmr}
Let $\Bu$ be a coherent sheaf on the curve~$X$.
If $\Ho^1(X,\:\Bu) = 0$ then the sheaf $\Etwn{\Bu}{\vah}$ is globally generated.
\end{lem}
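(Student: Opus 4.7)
The plan is to recognize this statement as a direct application of Castelnuovo--Mumford regularity for the very ample line bundle $\cL := \Atwn{\vah}$. To make the regularity theorem applicable on $X$, I would use the closed immersion $\phi\colon X \hookrightarrow \bP^N_{\fco}$ induced by $\cL$, satisfying $\phi^* \cO_{\bP^N}(1) \cong \cL$, and pass to the pushforward $\phi_* \Bu$ on $\bP^N_{\fco}$.

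The projection formula, combined with the exactness of $\phi_*$ for the closed immersion $\phi$, yields identifications
$$\Ho^i\bigl(\bP^N_{\fco},\, (\phi_* \Bu)(j)\bigr) \cong \Ho^i\bigl(X,\, \Etwn{\Bu}{j\vah}\bigr)$$
for all integers $i, j$. The $i = 1$, $j = 0$ term vanishes by hypothesis, while for $i \ge 2$ the cohomology vanishes automatically since $\phi_* \Bu$ is supported on $\phi(X)$, a closed subscheme of $\bP^N_{\fco}$ of dimension at most one. Taken together, these vanishings say precisely that $\phi_* \Bu$ is $1$-regular in the sense of Castelnuovo--Mumford.

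I would then invoke Mumford's classical regularity theorem: a $1$-regular coherent sheaf on $\bP^N$ has its first twist globally generated. Applied to $\phi_* \Bu$, this yields global generation of $(\phi_* \Bu)(1) = \phi_*\bigl(\Bu \otimes \cL\bigr) = \phi_*(\Etwn{\Bu}{\vah})$ over $\bP^N_{\fco}$. Since $\phi$ is a closed immersion, the generating global sections on $\bP^N_{\fco}$ pull back to global sections of $\Etwn{\Bu}{\vah}$ that generate it at every closed point of $X$, as required.

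The main obstacle is largely bookkeeping: setting up the embedding and invoking the projection formula correctly so that the regularity machinery on projective space can be applied. The underlying cohomology input is immediate and uses only the hypothesis $\Ho^1(X, \Bu) = 0$ together with the fact that $X$ is a curve.
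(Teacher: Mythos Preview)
Your proposal is correct and follows essentially the same route as the paper: embed $X$ into projective space via the very ample sheaf $\Atwn{\vah}$, observe that the pushforward $\phi_*\Bu$ is $1$-regular (using the hypothesis for $i=1$ and the fact that $X$ is a curve for $i\ge 2$), and then invoke the Castelnuovo--Mumford regularity theorem to conclude that $(\phi_*\Bu)(1)=\phi_*(\Etwn{\Bu}{\vah})$ is globally generated. The paper's proof is more terse but identical in substance.
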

\begin{proof}
Let $\iota\colon X \hookrightarrow \P^N$ be the closed embedding
defined by global sections of $\Atwn{\vah}$. The coherent sheaf
$\iota_*(\Bu)$ is $1$-regular in the sense of Castelnuovo--Mumford \stacks{08A3}.
Hence the sheaf $\iota_*(\Bu)(1) = \iota_*(\Etwn{\Bu}{\vah})$ is globally generated \stacks{08A8}.
\end{proof}

\begin{lem}\label{cmrsub}
Let $\Bu$ be a locally free sheaf of rank~$n$ on the curve~$X$. Set
\begin{equation*}
i = \left\lceil\frac{-\deg\Bu + (2g - 2)n + 1}{\rdeg}\right\rceil + \vah.
\end{equation*}
Suppose that the sheaf $\Bu$ is an iterated extension of invertible sheaves
of degree at most~$2g - 2$. 
Then the sheaf $\Etwn{\Bu}{i}$ is globally generated.
\end{lem}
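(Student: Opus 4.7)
The plan is to reduce the statement to a vanishing of $\Ho^1$ and then invoke Lemma \ref{cmr}. Set $j = i - \vah$, so that $\Etwn{\Bu}{i} = \Etwn{(\Etwn{\Bu}{j})}{\vah}$. By Lemma \ref{cmr} it is enough to show that $\Ho^1(X,\,\Etwn{\Bu}{j}) = 0$.

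The hypothesis supplies a filtration $0 = \Bu_0 \subset \Bu_1 \subset \dotsb \subset \Bu_n = \Bu$ with successive quotients $\Bu_k/\Bu_{k-1} \cong \cL_k$ invertible of degree $d_k \leqslant 2g-2$. Arguing by induction on $k$, the long exact sequence of cohomology attached to
\begin{equation*}
0 \to \Etwn{\Bu_{k-1}}{j} \to \Etwn{\Bu_k}{j} \to \Etwn{\cL_k}{j} \to 0
\end{equation*}
reduces the vanishing of $\Ho^1(X,\,\Etwn{\Bu}{j})$ to the vanishing of $\Ho^1(X,\,\Etwn{\cL_k}{j})$ for every $k$. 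An invertible sheaf on $X$ has vanishing $\Ho^1$ as soon as its degree exceeds $2g-2$, so it suffices to check that $d_k + j\rdeg \geqslant 2g - 1$ for all $k$.

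To handle this uniformly, I would exploit the degree hypothesis to bound the minimum $d_k$ from below. Since $\deg \Bu = \sum_{k'=1}^{n} d_{k'}$ and every individual $d_{k'} \leqslant 2g-2$, one has $\min_k d_k \geqslant \deg \Bu - (n-1)(2g-2)$. Substituting this estimate, the required inequality reduces to $j\rdeg \geqslant (2g-2)n + 1 - \deg \Bu$, which is exactly what the choice $j = \lceil (-\deg \Bu + (2g-2)n + 1)/\rdeg\rceil$ guarantees (the right-hand side is an integer, so multiplying the ceiling inequality by $\rdeg$ preserves it).

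The only real obstacle is the combinatorial bound on $\min_k d_k$; once it is in place everything else is Serre vanishing and a direct appeal to Lemma \ref{cmr}. Note that because each $d_k \leqslant 2g-2$ we automatically have $-\deg \Bu + (2g-2)n + 1 \geqslant 1 > 0$, so $j \geqslant 1$ and no degenerate cases need to be discussed separately.
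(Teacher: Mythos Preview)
Your proof is correct and follows the same route as the paper: both reduce to showing $\Ho^1(X,\,\Etwn{\Bu}{(i-\vah)})=0$ via Lemma~\ref{cmr}, then use the filtration and the bound $\min_k d_k \geqslant \deg\Bu - (n-1)(2g-2)$ (the paper phrases this via the substitution $e_k = (2g-2)-d_k\geqslant 0$, but the arithmetic is identical) to see each twisted subquotient has degree at least $2g-1$.
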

\begin{proof}
Write the degrees of invertible sheaves in the form $(2g - 2) - e_j$
with $e_j \geqslant 0$.
We have $\deg\Bu = (2g-2)n - \sum e_j$ so that
\begin{equation*}
-e_j - \deg \Bu + (2g-2)n \geqslant 0.
\end{equation*}
Hence the sheaf $\Bu' = \Etwn{\Bu}{(i-\vah)}$
is an iterated extension of invertible sheaves of degree \emph{at least} $2g-1$.
Induction on the number of invertible sheaves implies that
$\Ho^1(X,\,\Bu') = 0$
so the sheaf $\Etwn{\Bu'}{\vah}$ is globally generated by Lemma~\ref{cmr}.
\end{proof}

\begin{lem}\label{cmrvan}
Let $\Bu$ be a locally free sheaf of rank~$n$ on the curve~$X$. Set
\begin{equation*}
i = \left\lceil\frac{-\deg\Bu + (2g-2)n + 1}{\rdeg}\right\rceil + \vah n.
\end{equation*}
Suppose that $\Ho^0(X,\,\Bu) = 0$.
Then the sheaf $\Etwn{\Bu}{i}$ is globally generated.
\end{lem}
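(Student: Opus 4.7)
The strategy is to reduce to $\Ho^1$ vanishing via Lemma~\ref{cmr}: it suffices to show $\Ho^1(X,\,\Etwn{\Bu}{i-\vah}) = 0$. Setting $j = i - \vah = \lceil\frac{-\deg\Bu + (2g-2)n + 1}{\rdeg}\rceil + \vah(n-1)$, I would prove this by induction on the rank~$n$. The base case $n = 1$ is immediate from Serre duality, since the definition of~$j$ gives $\deg\Bu + j\rdeg \ge 2g-1$.

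For the inductive step, I would fit $\Bu$ into a short exact sequence
\[
0 \to \Bu' \to \Bu \to Q \to 0
\]
with $Q$ a line bundle of degree at most $2g-1$ and $\Bu'$ locally free of rank $n-1$. The existence of such~$Q$ follows from a Riemann--Roch argument on $\Bu^\vee$: any line bundle $L$ with $\deg L < -\frac{\deg\Bu}{n} + 1 - g$ has $\chi(\Bu^\vee \otimes L^{-1}) > 0$, yielding a nonzero map $L \to \Bu^\vee$ whose saturation is a line subbundle; the bound $\deg\Bu \le n(g-1)$, forced by $\Ho^0(\Bu) = 0$ and Riemann--Roch, allows $\deg L$ to reach $1-2g$, and dualizing gives a line quotient~$Q$ of~$\Bu$ with $\deg Q \le 2g-1$. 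The kernel~$\Bu'$ automatically inherits $\Ho^0(\Bu') = 0$ from the fact that taking sections is left exact.

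Now apply the inductive hypothesis to $\Bu'$ to obtain $\Ho^1(\Etwn{\Bu'}{j_1}) = 0$ for the analogous bound $j_1$ in rank $n-1$. A direct arithmetic check using $\deg Q \le 2g-1$ and $\vah\rdeg \ge 1$ (the latter from very-ampleness of $\Atwn{\vah}$ and $\rdeg \ge 1$) shows $j_1 \le j$; the extra summand $\vah$ in passing from $\vah(n-2)$ to $\vah(n-1)$ is exactly what absorbs the worst-case overshoot $\deg Q - (2g-2) \le 1$ inside the ceiling. Simultaneously, the size of $j$ forces $\deg Q + j\rdeg \ge 2g-1$, hence $\Ho^1(\Etwn{Q}{j}) = 0$. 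The long exact sequence of cohomology for the twisted SES then sandwiches $\Ho^1(\Etwn{\Bu}{j})$ between two zeros, completing the induction. The main obstacle is the simultaneous control of both the inductive estimate $j_1 \le j$ and the vanishing $\Ho^1(\Etwn{Q}{j}) = 0$; each is just Riemann--Roch bookkeeping, but the margin provided by $\vah\rdeg \ge 1$ is tight.
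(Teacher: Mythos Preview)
Your approach is essentially correct and reaches the goal, but it takes a genuinely different route from the paper's proof. The paper applies Serre duality once and for all: from $\Ho^0(X,\Bu)=0$ it gets $\Ho^1(X,\Bu^*\otimes\canb)=0$, so Lemma~\ref{cmr} makes $\Bu^*\otimes\canb(\vah\infty)$ globally generated; dualizing yields an embedding $\Etwneg{\Bu}{\vah}\hookrightarrow\canb^{\oplus m}$, and hence a full filtration of $\Etwneg{\Bu}{\vah}$ by invertible subsheaves of degree at most $2g-2$, to which Lemma~\ref{cmrsub} applies directly. You instead build such a filtration one step at a time by peeling off a line quotient $Q$ via Riemann--Roch on $\Bu^\vee$ and inducting on the rank; in effect you are reproving a dual form of Lemma~\ref{cmrsub} inside the argument rather than invoking it. The paper's version is cleaner because the embedding into $\canb^{\oplus m}$ delivers the whole filtration at once with the sharper bound $\deg\le 2g-2$, whereas your quotient $Q$ only satisfies $\deg Q\le 2g-1$, which is why you need the extra cushion $\vah\rdeg\ge 1$.

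One point in your sketch deserves more care. When you assert that ``the size of $j$ forces $\deg Q + j\rdeg \ge 2g-1$'', you implicitly need a \emph{lower} bound on $\deg Q$: your construction only gives the upper bound $\deg Q\le 2g-1$, and the saturation $L'$ could in principle have much higher degree than $L$, making $\deg Q$ very negative. The missing observation is that $\Ho^0(\Bu')=0$ forces $\deg\Bu'\le (n-1)(g-1)$ by Riemann--Roch, whence $\deg Q = \deg\Bu - \deg\Bu' \ge \deg\Bu - (n-1)(g-1)$; combined with $j\rdeg \ge -\deg\Bu + (2g-2)n + 1 + \vah(n-1)\rdeg$ this gives $\deg Q + j\rdeg \ge (n-1)\bigl((g-1)+\vah\rdeg\bigr) + 2g-1 \ge 2g-1$. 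Also note that taking $\deg L = 1-2g$ exactly requires the existence of a divisor of degree~$1$ on~$X$, which holds over the finite constant field~$\fco$ by F.\,K.~Schmidt's theorem.
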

\begin{proof}%
Let $\canb$ be the canonical sheaf of the curve $X\to\Spec\fco$
and set $\cF = \Bu^* \otimes \canb$.
Serre duality shows that $\Ho^1(X,\,\cF) = 0$ 
so the sheaf $\Etwn{\cF}{\vah}$ is globally generated
by Lemma~\ref{cmr}. Dualizing and twisting we get an embedding
$\Etwn{\Bu}{-\vah} \hookrightarrow \canb^{\oplus m}$, $m \gg 0$.
Hence the sheaf
$\Bu' = \Etwn{\Bu}{-\vah}$
is an iterated extension of invertible subsheaves $\cL \subset \canb$.
Every such subsheaf has degree at most $2g - 2$.
Lemma~\ref{cmrsub} implies that the sheaf $\Etwn{\Bu'}{(i+\vah)} = \Etwn{\Bu}{i}$
is globally generated with
\begin{equation*}
i = \left\lceil\frac{-\deg\Bu'+(2g-2)n + 1}{\rdeg}\right\rceil.
\end{equation*}
The claim follows since
$\deg\Bu' = \deg\Bu - \rdeg\vah n$.
\end{proof}

\begin{thm}\label{volbound}
Let $\Lat$ be a lattice of rank~$n$. 
Suppose that every nonzero vector of $\Lat$ has norm at least~$1$.
Then the $A$-module $\Lat$ is generated by the subset 
\begin{equation*}
\Latb{\volu \cdot \econs^n}
\end{equation*}
with
$\econs = {\cco}^{\econx}$.
\end{thm}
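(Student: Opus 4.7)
My plan is to translate the problem into the language of vector bundles via the correspondence established in Theorem~\ref{latbun}. Setting $\Vgr = \Vgrb{1}$, the crucial observation is that for any integer $i$, global generation of $\Etwn{\Vgr}{i}$ on the complete curve $X$ implies that $\Ho^0(X,\Etwn{\Vgr}{i}) = \Latb{\twc^i}$ generates $\Etwn{\Vgr}{i}|_Y = \Ugr$ stalk by stalk, and hence generates $\Lat = \Ho^0(Y,\Ugr)$ as an $A$-module. Thus the theorem will follow if I can exhibit an integer $i$ for which $\Etwn{\Vgr}{i}$ is globally generated and $\twc^i \leqslant \volu\cdot\econs^n$, since then $\Latb{\twc^i} \subseteq \Latb{\volu\cdot\econs^n}$.

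To apply Lemma~\ref{cmrvan}, I set $\Bu = \Etwn{\Vgr}{-1}$. The norm hypothesis forces $\Latb{\twc^{-1}} = 0$ because no nonzero vector has norm $\leqslant \twc^{-1} < 1$; hence $\Ho^0(X,\Bu) = 0$. I then take the auxiliary integer to be $\vah = \lceil(2g+1)/\rdeg\rceil$, so that $\Atwn{\vah}$ has degree at least $2g+1$ and is therefore very ample on the smooth projective curve $X$, while also satisfying the sharp inequality $\vah\rdeg \leqslant 2g + \rdeg$. Lemma~\ref{cmrvan} then provides an integer $j$ such that $\Etwn{\Bu}{j} = \Etwn{\Vgr}{j-1}$ is globally generated, where
\[
j = \left\lceil\frac{-\deg\Bu + (2g-2)n + 1}{\rdeg}\right\rceil + \vah n.
\]

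It remains to estimate $\twc^{j-1}$. By Lemma~\ref{voleuler} combined with Riemann--Roch, $\deg\Vgr = -\log_{\cco}(\volu) + n(g-1)$, so that $-\deg\Bu = -\deg\Vgr + n\rdeg = \log_{\cco}(\volu) + n(\rdeg - g + 1)$. Using the standard bound $\rdeg\lceil k/\rdeg\rceil \leqslant k + \rdeg - 1$ together with $\vah\rdeg \leqslant 2g + \rdeg$, a direct computation gives
\[
\rdeg(j - 1) \leqslant \log_{\cco}(\volu) + n(3g + 2\rdeg - 1),
\]
which, upon exponentiating by $\cco$ and recalling $\twc = \cco^{\rdeg}$, becomes $\twc^{j-1} \leqslant \volu \cdot \econs^n$, completing the argument. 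The main obstacle is arithmetic rather than conceptual: all constants must align exactly to give the stated $\econx = 3g + 2\rdeg - 1$, so both the minimal very ample choice of $\vah$ and the sharp ceiling estimate are essential --- weaker choices would enlarge the exponent.
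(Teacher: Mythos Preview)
Your proof is correct and follows essentially the same route as the paper: both apply Lemma~\ref{cmrvan} to the twist $\Etwn{\Vgrb{1}}{-1}$ (whose $\Ho^0$ vanishes by the norm hypothesis), choose the minimal $\vah$ with $\rdeg\vah \geqslant 2g+1$, and perform the same Riemann--Roch and ceiling estimates to obtain the exponent $3g+2\rdeg-1$. Your presentation is slightly more explicit about the choice of $\vah$ and the passage from global generation on $X$ to $A$-module generation of $\Lat$, but the argument is identical in substance.
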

\begin{proof}
Consider the locally free sheaf $\Bu = \Vgrb{1}$. We have
\begin{equation*}
\Ho^0(X,\:\Etwneg{\Bu}) = \Latb{\twc^{-1}}.
\end{equation*}
Thus $\Ho^0(X,\:\Etwneg{\Bu}) = 0$
by our assumption on the length of nonzero lattice vectors.
Invoking Riemann--Roch formula we deduce that
\begin{equation*}
-\deg(\Etwneg{\Bu}) = -\chi(\Bu) + (-g + 1 + \rdeg) n.
\end{equation*}
Hence by Lemma~\ref{cmrvan} the sheaf $\Etwn{\Bu}{(i-1)}$
is globally generated when
\begin{equation*}
i = \left\lceil\frac{-\chi(\Bu) + (g - 1 + \rdeg + \rdeg\vah)n + 1}{\rdeg}\right\rceil.
\end{equation*}
To ensure that the invertible sheaf $\Atwn{\vah}$
is very ample it is enough to take any integer $\vah$ such that $\rdeg\vah\geqslant 2g + 1$.
We thus have an estimate $\rdeg\vah\leqslant 2g + \rdeg$,
and the sheaf $\Etwn{\Bu}{j}$ is globally generated for an integer $j$ such that
\begin{equation*}
j \leqslant \left\lceil\frac{-\chi(\Bu) + (\econx)n + 1}{\rdeg}\right\rceil - 1.
\end{equation*}
In particular the $A$-module $\Lat = \Ho^0(Y,\:\Etwn{\Bu}{j})$ is generated by the subset
\begin{equation*}
\Latb{\twc^j} = \Ho^0(X,\:\Etwn{\Bu}{j}).
\end{equation*}
Since $\twc = {\cco}^{\rdeg}$ we obtain an estimate
\begin{equation*}
\twc^j \leqslant {\cco}^{-\chi(\Bu)} \cdot {\cco}^{(\econx)n}.
\end{equation*}
The result follows as ${\cco}^{-\chi(\Bu)} = \volu$ by Lemma~\ref{voleuler}.
\end{proof}

In the case $A=\fco[t]$ the estimate of Theorem~\ref{volbound} is sharp in all ranks.
The lattice $A^{\oplus n}$ with the supremum norm has volume ${\cco}^{-n}$,
is generated by vectors of norm~$1$ and has no nonzero vectors of norm
strictly less than~$1$.

\section{Preliminaries concerning Drinfeld modules}

Fix a field $\Fq$ of cardinality~$q < \infty$.
Let $K$ be a field over~$\Fq$.
In the following we will refer to $K$ as the \emph{base field}.
Pick a separable closure $K^s/K$ and let $G_K$ be the corresponding absolute Galois group.

\subsection{Twisted polynomials}

Let $K[\tau]$ be the \emph{twisted polynomial ring}.
This is the ring of polynomials in a formal variable $\tau$
with coefficients in $K$ and with the multiplication 
subject to the rule
$\tau\,\alpha = \alpha^q\,\tau$ for all $\alpha\in K$.
The elements of $K[\tau]$ will be referred to as
\emph{$\tau$-polynomials}.

The ring $K[\tau]$ is 
the endomorphism ring of the $\Fq$-module scheme $\Ga$ over
$\Spec K$ with the element $\tau$ corresponding to the $q$-Frobenius.
This interpretation allows us to evaluate $\tau$-polynomials
at points of $\Ga$.
%


\begin{lem}\label{addreal}
Let $V$ be a finite-dimensional Galois-stable $\Fq$-vector subspace of~$K^s$
and let $f\colon V \to K^s$ be a Galois-equivariant $\Fq$-linear map.
Set $n = \dim V$ and suppose that $n > 0$. Then the following holds:
\begin{statements}
\item\label{addrealeu}
There is a unique $\tau$-polynomial of the form
\begin{equation*}
\varphi = a_0 + a_1 \, \tau + \dotsc + a_{n-1} \, \tau^{n-1}, \quad a_i \in K,
\end{equation*}
which satisfies $\varphi(v) = f(v)$ for all $v \in V$.

\item\label{addrealexpr}
The coefficients $a_i$ are determined by the equation
\begin{equation*}
\begin{pmatrix}
v_1 & v_1^q & \dotsc & v_1^{q^{n-1}} \\
v_2 & v_2^q & \dotsc & v_2^{q^{n-1}} \\
& & \dotsc & \\
v_n & v_n^q & \dotsc & v_n^{q^{n-1}}
\end{pmatrix}
\begin{pmatrix}
a_0 \\ a_1 \\ \vdots \\ a_{n-1}
\end{pmatrix} =
\begin{pmatrix}
f(v_1) \\ f(v_2) \\ \vdots \\ f(v_n)
\end{pmatrix}.
\end{equation*}
where $v_1, \dotsc, v_n$ is a basis of~$V$.
\end{statements}
\end{lem}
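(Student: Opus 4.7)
The plan is to prove uniqueness first by a root-counting argument on additive polynomials, then to establish existence by solving the displayed Moore matrix system over $K^s$, and finally to descend the solution to $K$ using the Galois-equivariance hypothesis on both $V$ and $f$. Part \ref{addrealexpr} will fall out as a direct translation of the interpolation condition once the correct $\varphi$ has been found.

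For uniqueness, suppose $\varphi = a_0 + a_1\tau + \dotsc + a_{n-1}\tau^{n-1}$ satisfies $\varphi(v) = 0$ for every $v \in V$. Regarded as an ordinary polynomial in one variable over $K^s$, $\varphi$ has degree at most $q^{n-1}$, yet it vanishes on all $q^n$ elements of $V$. Since $q^n > q^{n-1}$ the polynomial $\varphi$ must be identically zero, whence $a_i = 0$ for all $i$. Because $\varphi$ and $f$ are both $\Fq$-linear, this kernel computation proves uniqueness of the interpolating $\tau$-polynomial.

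For existence, I pick any basis $v_1, \dotsc, v_n$ of $V$ over $\Fq$ and consider the Moore matrix $M = (v_i^{q^{j}})_{1\leqslant i \leqslant n,\,0\leqslant j\leqslant n-1}$. A classical determinantal identity (proved by induction via column operations involving raising to the $q$-th power) shows that $\det M \neq 0$ precisely because $v_1, \dotsc, v_n$ are $\Fq$-linearly independent. Consequently the linear system in \ref{addrealexpr} admits a unique solution $(a_0, \dotsc, a_{n-1}) \in (K^s)^n$, and the associated $\tau$-polynomial $\varphi$ interpolates $f$ on the basis; by $\Fq$-linearity it interpolates $f$ on all of $V$.

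It remains to check that $a_i \in K$. Fix $\sigma \in G_K$ and set $\varphi^\sigma = \sum_i \sigma(a_i)\tau^i$. For any $\tau$-polynomial $\psi$ and any $w \in K^s$ one has $\sigma(\psi(w)) = \psi^\sigma(\sigma(w))$, since $\tau$ acts as the $q$-Frobenius and thus commutes with $\sigma$. Taking any $v \in V$, the element $\sigma^{-1}(v)$ lies in $V$ by Galois-stability, so $\varphi(\sigma^{-1}(v)) = f(\sigma^{-1}(v))$; applying $\sigma$ and using Galois-equivariance of $f$ yields $\varphi^\sigma(v) = f(v)$. Hence $\varphi^\sigma$ satisfies the same defining property as $\varphi$, so by uniqueness $\varphi^\sigma = \varphi$, i.e., $\sigma(a_i) = a_i$ for all $\sigma$ and $i$. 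The only nontrivial ingredient I expect to invoke without proof is the nonvanishing of the Moore determinant for $\Fq$-linearly independent vectors; everything else is linear algebra plus the observation that the $q$-Frobenius is $K$-linear in an appropriate sense.
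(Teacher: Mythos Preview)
Your proof is correct and follows essentially the same approach as the paper: invoke nonvanishing of the Moore determinant to solve the linear system over $K^s$, then use Galois-equivariance of $f$ and Galois-stability of $V$ to conclude that the Galois conjugate $\varphi^\sigma$ also interpolates $f$, forcing $\varphi^\sigma = \varphi$ by uniqueness. The only cosmetic difference is that you prove uniqueness via root-counting on the underlying additive polynomial, whereas the paper reads it off directly from invertibility of the Moore matrix; both are equally valid and equally short.
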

\begin{proof}
The determinant of the square matrix in~\ref{addrealexpr}
is the Moore determinant $M(v_1,\dotsc,v_n)$,
see \cite[Definition 3.1.17]{papikian}.
This determinant is non-zero because the elements $v_1, \dotsc, v_n$ are $\Fq$-linearly independent.
Hence the solution vector 
is uniquely determined.

Consider the polynomial
$\varphi = a_0 + a_1 \, \tau + \dotsc + a_{n-1} \, \tau^{n-1}$.
By construction we have
$\varphi(v_i) = f(v_i)$
for all $i \in \{1,\dotsc,n\}$ which implies by linearity
that $\varphi(v) = f(v)$ for all $v \in V$.
It remains to show that the coefficients of $\varphi$
are Galois-invariant and thus belong to the subfield $K\subset K^s$.

Apply an automorphism $g \in G_K$
to both sides of the matrix equation in~\ref{addrealexpr}.
Since the map $f$ is Galois-equivariant it follows that the polynomial
\begin{equation*}
{}^g \varphi = g(a_0) + g(a_1) \, \tau + \dotsc + g(a_{n-1}) \, \tau^{n-1}
\end{equation*}
satisfies $({}^g \varphi)(g v_i) = f(g v_i)$ for all $i$.
The set $g v_1, \dotsc, g v_n$ is yet another basis of~$V$
so the linearity implies that
$({}^g \varphi)(v) = f(v)$ for all $v \in V$.
In particular $({}^g \varphi)(v_i) = f(v_i)$ for all $i$.
Since the matrix equation in~\ref{addrealexpr} has a unique solution
we deduce that ${}^g \varphi = \varphi$, and the claim follows.
\end{proof}

\subsection{Artin--Schreier theory}\label{astheory}
\newcommand{\isog}{\varphi}
\newcommand{\isoga}{\psi_0}
\newcommand{\isogb}{\psi_1}
\newcommand{\kerisog}{V}

Let $E$ be an $\Fq$-module scheme over $\Spec K$ which is isomorphic to~$\Ga$.
Consider a \emph{separable isogeny} $\isog\colon E \to E$,
i.e. an $\Fq$-linear morphism that induces an isomorphism of the tangent spaces at zero.
Let $\kerisog$ be the kernel of $\isog$ viewed as a morphism of sheaves on the small \'etale site of~$\Spec K$.
The sheaf $\kerisog$ 
corresponds to the $\Fq$-vector space of roots
\begin{equation*}
\{ y \in E(K^s), \:\isog(y) = 0 \}
\end{equation*}
equipped with the natural Galois action. The short exact sequence of sheaves
\begin{equation*}
\begin{tikzcd}
0 \rar & \kerisog \rar & E \rar["\isog"] & E \rar & 0
\end{tikzcd}
\end{equation*}
induces a long exact sequence of cohomology that terminates in a surjective boundary homomorphism
\begin{equation*}
\begin{tikzcd}[column sep=3em]
E(K) \rar[two heads,"\AS_\varphi"] & \Ho^1(K,\:\kerisog).
\end{tikzcd}
\end{equation*}
The boundary homomorphism $\AS_\isog$ sends $x \in E(K)$ to the class of the cocycle
$g \mapsto g y - y$
with $y \in E(K^s)$ any element satisfying $\isog(y) = x$.

%
%

We will refer to $\AS_\isog$ as the \emph{Artin--Schreier homomorphism}
of the isogeny~$\isog$.
We will use the shorthand $\AS_q$ to denote
the Artin--Schreier homomorphism of the isogeny
$\tau - 1\colon \Ga \to \Ga$ with kernel the constant sheaf~$\Fq$.

Suppose that the sheaf~$\kerisog$ is constant, or equivalently
that the $\Fq$-vector space $\kerisog$ is contained in $E(K)$. 
Then the cohomology group $H^1(K,\:\kerisog)$ coincides with the $\Fq$-vector space
of continuous homomorphisms
$G_K\to \kerisog$. In this case we will often write the Artin--Schreier
boundary homomorphism $\AS_\isog$
in the form of a pairing
\begin{equation*}
\kumn{\isog}\colon E(K) \times G_K \to \kerisog, \quad
\kumu{x}{\isog} = \AS_{\varphi}(x).
\end{equation*}
We will use the shorthand $\kumn{q}$ for the pairing of the isogeny $\tau-1$.

\begin{lem}\label{asfactor}
Let $\isog\colon \Ga \to \Ga$ be a separable isogeny
with kernel~$V$.
Then for every Galois-equivariant epimorphism $f\colon V \twoheadrightarrow \Fq$ 
there is a scaling factor $u \in K$ which makes the following
square commutative:
\begin{equation*}
\begin{tikzcd}[column sep=5em]
K \rar["x\:\mapsto\:u \, x"] \dar["\AS_\isog"'] & K \dar["\AS_q"] \\
\Ho^1(K,\:V) \rar["{\Ho^1(f)}"] & \Ho^1(K,\:\Fq)
\end{tikzcd}
\end{equation*}
The factor $u$ is computed via Moore determinants:
\begin{equation*}
u = \alpha^{-1} \, \left(\frac{M(w_1,\,\dotsc\,,\,w_n)}{M(w_1,\,\dotsc\,,\,w_n,\,v)}\right)^q.
\end{equation*}
The terms in this formula have the following meaning:
\begin{itemize}
\item
$\alpha$ is the top coefficient of the $\tau$-polynomial $\isog$,

\item
$v \in V$ is an element such that $f(v) = 1$,

\item
$w_1, \dotsc, w_n \in V$ is a basis of the hyperplane $\ker(f) \subset V$.
\end{itemize}
The value of $u$ is independent of choices of
$v$ and $w_1,\dotsc,w_n$.
\end{lem}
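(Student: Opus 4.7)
The plan is to lift $f$ to a $\tau$-polynomial $\psi \in K[\tau]$, derive an identity $(\tau-1)\psi = u\varphi$ in $K[\tau]$, and then read off both the commutativity of the diagram and the explicit value of $u$ from this identity.

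First, I would invoke Lemma~\ref{addreal} to obtain the unique $\tau$-polynomial $\psi = a_0 + a_1\tau + \dots + a_n\tau^n \in K[\tau]$ with $\psi(w) = f(w)$ for all $w \in V$, where $n + 1 = \dim_{\Fq} V$. The hypothesis that $f$ is Galois-equivariant and $\Fq$-valued is exactly what is needed for the lemma to apply. Since $f(V) \subset \Fq$ and $\tau - 1$ annihilates $\Fq$, the composite $(\tau-1)\psi$ vanishes on $V = \ker\varphi$. Right Euclidean division in $K[\tau]$ (permissible because $\varphi$ has nonzero leading coefficient) then yields $(\tau-1)\psi = h\varphi + R$ with $\deg R < \deg\varphi = n+1$, and the remainder $R$ must vanish because it kills the $(n+1)$-dimensional $\Fq$-space $V$. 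Comparing degrees in $(\tau-1)\psi = h\varphi$ forces $h$ to be a scalar $u \in K$, so
\begin{equation*}
(\tau - 1)\psi \;=\; u\,\varphi \quad \text{in } K[\tau].
\end{equation*}

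Granted this identity, commutativity of the square unwinds mechanically. Fix $x \in K$ and pick $y \in K^s$ with $\varphi(y) = x$. Then $\AS_\varphi(x)$ is represented by the cocycle $g \mapsto gy - y \in V$, so $\Ho^1(f)\,\AS_\varphi(x)$ is the cocycle $g \mapsto f(gy - y)$. Since $\psi|_V = f$ and $\psi$ has coefficients in $K$, this rewrites as $\psi(gy) - \psi(y) = g\psi(y) - \psi(y)$. On the other hand, evaluating the identity $(\tau - 1)\psi = u\varphi$ at $y$ yields $(\tau-1)(\psi(y)) = ux$, so $\psi(y)$ is an Artin--Schreier lift of $ux$ and $\AS_q(ux)$ is represented by the very same cocycle.

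For the explicit formula I would compare the coefficients of $\tau^{n+1}$ on the two sides of $(\tau-1)\psi = u\varphi$: on the left they equal $a_n^q$, on the right $u\alpha$, so $u = a_n^q/\alpha$. The restriction $\psi|_V = f$ has kernel of dimension $n$, forcing $\deg\psi = n$ and $a_n \ne 0$. To extract $a_n$, I would apply Lemma~\ref{addreal}\ref{addrealexpr} to the basis $(w_1, \dots, w_n, v)$ of $V$: the right-hand side of the matrix equation is $(0, \dots, 0, 1)^{\mathrm T}$, and Cramer's rule for the last coordinate of the solution gives $a_n = M(w_1, \dots, w_n)/M(w_1, \dots, w_n, v)$, yielding the claimed expression. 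Independence of $u$ from the choices of $v$ and the $w_i$ is automatic, since $\psi$—hence $u$—is uniquely determined by $f$ via Lemma~\ref{addreal}. The only mildly delicate step in this plan is the right-division step in the non-commutative ring $K[\tau]$; once that is in place, the rest is bookkeeping.
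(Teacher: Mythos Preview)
Your proof is correct and follows essentially the same route as the paper's: both lift $f$ to a $\tau$-polynomial $\psi$ via Lemma~\ref{addreal}, observe that $(\tau-1)\psi$ is right-divisible by $\varphi$ (the paper cites \cite[Corollary~3.1.16]{papikian}, you do the Euclidean division by hand), deduce that the quotient is a scalar $u$, and read off $u = a_n^q/\alpha$ from the top coefficients. Your version is slightly more explicit in two places---the cocycle computation verifying commutativity (the paper relies implicitly on naturality of boundary maps for the morphism of short exact sequences) and the Cramer's-rule extraction of $a_n$---and your observation that independence of $u$ from the choices follows from the uniqueness in Lemma~\ref{addreal} is a point the paper leaves unstated.
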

The factor~$u$ of Lemma~\ref{asfactor} is unique provided that $\Ho^1(K,\,\Fp) \ne 0$.
\begin{proof}[Proof of Lemma~\ref{asfactor}]
As a first step we shall prolong the morphism $f\colon V \to \Fq$ to a morphism
of short exact sequences of sheaves:
\begin{equation*}
\begin{tikzcd}
0 \rar & V \dar["f"] \rar & \Ga \dar[dashed,"\isoga"] \rar["\isog"] & \Ga \dar[dashed,"\isogb"] \rar & 0 \\
0 \rar & \Fq \rar & \Ga \rar["\tau - 1"] & \Ga \rar & 0
\end{tikzcd}
\end{equation*}
By Lemma~\ref{addreal}~\ref{addrealeu} there is a unique polynomial 
$\isoga \in K[\tau]$ of degree strictly less than $\dim V = n + 1$ which makes
the left square commutative. The composite $(\tau - 1) \, \isoga$ vanishes
on the $\Fq$-vector subspace~$V \subset K^s$, and is thus divisible
by the polynomial $\isog$ on the right \cite[Corollary~3.1.16]{papikian}.
We get the existence of the polynomial $\isogb$.
By construction $\deg \isoga \leqslant n$ and $\deg \isog = n+1$ so the relation
\begin{equation*}
\isogb \, \isog = (\tau - 1)\,\isoga
\end{equation*}
implies that the polynomial $\isogb = u$ is constant, and that
\begin{equation*}
u\,\alpha = \beta^q
\end{equation*}
where $\beta$ is the top coefficient of the polynomial $\isoga$.
This coefficient is computed by Lemma~\ref{addreal}~\ref{addrealexpr}
which yields the Moore determinant formula for~$u$.
\end{proof}

\subsection{Drinfeld modules}\label{predm}
\newcommand{\carE}{\iota_E}
\newcommand{\ndual}{\fn^{-1}}

Fix a global function field $\F$ over~$\Fq$ and a place~$\infty$ of~$\F$.
As in~\S\ref{coeffring} this determines a \emph{coefficient ring}~$A$,
i.e. the subring of~$\F$ consisting of elements which are regular
outside~$\infty$. We will freely use the notation and the terminology of~\S\ref{coeffring}
with respect to coefficient rings and related objects.

For us a \emph{Drinfeld $A$-module} over $\Spec K$ is an $A$-module
scheme~$E$ such that the underlying $\Fq$-module scheme is isomorphic
to~$\Ga$, and the multiplication
morphism $a\colon E \to E$ is an isogeny of degree strictly greater than~$1$
for at least one element $a \in A$.

Subject to a choice of an $\Fq$-linear isomorphism $E \isoarrow \Ga$
the data of an $A$-module structure on~$E$ amounts to a morphism of $\Fq$-algebras
$\varphi\colon A \to K[\tau]$, 
and one recovers the more common definition of a Drinfeld module.

Let $\Lie_E$ be the tangent space of~$E$ at zero.
This is a vector space of dimension~$1$ over the base field~$K$.
The action of $A$ on $\Lie_E$ determines a morphism
$\carE\colon A \to K$, the \emph{characteristic homomorphism}
of the Drinfeld module~$E$.
%
The \emph{characteristic} of~$E$ is the ideal $\ker(\carE) \subset A$.

Let $\fn \subset A$ be a proper ideal which is not divisible by
the characteristic of~$E$.
The $\fn$-torsion of~$E$ is most naturally represented by the $A/\fn$-module
\begin{equation*}
E[\fn] = \Hom_A(A/\fn,\:E(K^s)).
\end{equation*}
We will use a modified definition which makes it easier to form inverse systems:

\begin{dfn}
$\ntor{E} = \Hom_A(\ndual/A,\:E(K^s))$.
\end{dfn}

The $A/\fn$-module $\ntor{E}$ carries
an action of the absolute Galois group $G_K$ inherited
from~$E(K^s)$. As the $A/\fn$-module $\ndual/A$ is free of rank~$1$
a choice of its generator gives rise to a $G_K$-equivariant isomorphism of
$A/\fn$-modules:
\begin{equation*}
\ntor{E} \isoarrow E[\fn]. 
\end{equation*}

An inclusion of ideals $\fn'\subset\fn$ induces a surjection
$\xtor{E}{\fn'} \twoheadrightarrow \xtor{E}{\fn}$, and the resulting morphism
$A/\fn \otimes_{A/\fn'} \xtor{E}{\fn'} \isoarrow \xtor{E}{\fn}$
is an isomorphism. 

\begin{lem}\label{torses}
We have a Galois-equivariant short exact sequence of $A$-modules
which is natural in the ideal~$\fn$ and the Drinfeld module~$E$:
\begin{equation*}
\begin{tikzcd}
0 \rar & \ntor{E} \rar & \Hom_A(\fn^{-1},\:E(K^s)) \rar & E(K^s) \rar & 0.
\end{tikzcd}
\end{equation*}
The first arrow is given by the composition with the quotient map
$\fn^{-1} \twoheadrightarrow \fn^{-1}/A$ and the second arrow is
the evaluation at $1 \in \fn^{-1}$.
\end{lem}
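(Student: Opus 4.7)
The plan is to identify the claimed sequence as the long exact sequence obtained by applying $\Hom_A(-,\,E(K^s))$ to the short exact sequence
\begin{equation*}
0 \to A \to \fn^{-1} \to \fn^{-1}/A \to 0,
\end{equation*}
and then to establish surjectivity of the right-hand map by hand.

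First I would apply $\Hom_A(-,\,E(K^s))$ to the displayed sequence. By definition $\Hom_A(\fn^{-1}/A,\,E(K^s)) = \ntor{E}$, and evaluation at $1 \in A$ identifies $\Hom_A(A,\,E(K^s))$ with $E(K^s)$. This gives the left-exact part of the asserted sequence, with the first arrow being composition with the quotient map $\fn^{-1} \twoheadrightarrow \fn^{-1}/A$ and the second arrow being evaluation at $1 \in \fn^{-1}$. Galois equivariance is automatic because $G_K$ acts only on the $E(K^s)$ argument, and naturality in $\fn$ and $E$ follows from the functoriality of $\Hom_A$ together with the functoriality of $\fn \mapsto \fn^{-1}$ and $\fn \mapsto \fn^{-1}/A$.

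The only nontrivial step, and the main obstacle, is to prove surjectivity of the evaluation map $\Hom_A(\fn^{-1},\,E(K^s)) \to E(K^s)$, $\varphi \mapsto \varphi(1)$. Fix $x \in E(K^s)$ and pick any nonzero $a \in \fn$. Since $\fn$ is by hypothesis not divisible by the characteristic of $E$, the multiplication map $[a]\colon E \to E$ is a separable isogeny, so by Artin--Schreier surjectivity on the separably closed field $K^s$ there exists $y \in E(K^s)$ with $a y = x$. From $a A \subset \fn$ we obtain $\fn^{-1} \subset a^{-1} A$, so every $b \in \fn^{-1}$ can be written uniquely as $b = c/a$ with $c \in A$. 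Setting
\begin{equation*}
\varphi(c/a) := c y \in E(K^s)
\end{equation*}
defines an $A$-linear map $\varphi\colon a^{-1}A \to E(K^s)$; its restriction to $\fn^{-1}$ is $A$-linear and satisfies $\varphi(1) = \varphi(a/a) = a y = x$, giving the required lift.

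Given surjectivity, the portion of the long exact sequence to the right of $E(K^s)$ is irrelevant and the sought short exact sequence follows. The Galois equivariance and naturality assertions, already observed to hold term-by-term, are preserved by the quotient to a short exact sequence.
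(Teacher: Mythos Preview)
Your argument is essentially correct, with one small slip: you write ``pick any nonzero $a \in \fn$'' and then assert that $[a]$ is separable because $\fn$ is not divisible by the characteristic. But an arbitrary nonzero $a \in \fn$ may well lie in the characteristic ideal; what you need is $a \in \fn$ with $a$ not in the characteristic, which exists precisely because $\fn$ is not contained in that prime. With this adjustment your lift $\varphi\colon a^{-1}A \to E(K^s)$, $c/a \mapsto cy$, works as written.

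The paper takes a slightly different route to surjectivity. It first treats the case where $\fn = (a)$ is principal (so the generator automatically avoids the characteristic), and then reduces the general case to the principal one by noting that some power $\fn^i$ is principal---using finiteness of the Picard group of $\Spec A$---and invoking naturality in~$\fn$. Your direct construction via the inclusion $\fn^{-1} \subset a^{-1}A$ is more elementary and sidesteps the Picard group entirely; the paper's two-step reduction, on the other hand, isolates the principal case cleanly and mirrors reductions to principal ideals that recur later in the paper.
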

\begin{proof}
We only need to prove the exactness on the right.
Suppose that $\fn = (a)$ for some $a \in A$. 
The isogeny $a\colon E \to E$ is separable since the ideal~$\fn$
is not divisible by the characteristic of~$E$. Hence the morphism
$a\colon E(K^s) \twoheadrightarrow E(K^s)$ is surjective, and we get the claim.

In the general case the ideal $\fn^i$ is principal for all sufficiently divisible integers $i > 0$
since the Picard group of the scheme~$\Spec A$ is finite.
The claim results from the naturality of our sequence with respect to the ideal~$\fn$.
\end{proof}


\begin{dfn}
The \emph{Kummer boundary homomorphism}
\begin{equation*}
\AS_{\fn}\colon E(K) \to \Ho^1(K,\:\ntor{E})
\end{equation*}
is the boundary homomorphism arising from the sequence of Lemma~\ref{torses}.
When the $\fn$-torsion of~$E$ is rational we will write it in the form of a pairing:
\begin{equation*}
\kumn{\fn}\colon E(K) \times G_K \to \ntor{E}.
\end{equation*}
The map $\AS_{\fn}$ can be also called the Artin--Schreier homomorphism but
this name is less common in the literature.
\end{dfn}

\begin{lem}\label{dmasdesc}
The homomorphism $\AS_{\fn}$ has the following description:
\begin{statements}
\item
For each $x \in E(K)$ there is an $A$-module homomorphism
$f\colon \fn^{-1} \to E(K^s)$ such that $f(1) = x$.

\item
The cohomology class $\AS_{\fn}(x)$ is represented by a cocycle
given by the formula $g \mapsto g \circ f - f$
where the right hand side is seen as a morphism from the quotient
$\fn^{-1}/A$.
\end{statements}
\end{lem}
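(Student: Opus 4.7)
The plan is to recognize this as a routine unpacking of the standard construction of the connecting homomorphism in Galois cohomology applied to the short exact sequence of Lemma~\ref{torses}. I would proceed in three short steps.

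First, for part (i), I would invoke Lemma~\ref{torses} directly. That lemma states that the evaluation map
\begin{equation*}
\mathrm{ev}_1 \colon \Hom_A(\fn^{-1},\:E(K^s)) \twoheadrightarrow E(K^s), \quad f \mapsto f(1),
\end{equation*}
is surjective. Hence any $x \in E(K) \subset E(K^s)$ admits a preimage $f \in \Hom_A(\fn^{-1},\:E(K^s))$, proving (i). Note that here $\fn^{-1}$ is viewed as a fixed $A$-module with trivial Galois action, while the Galois action on $\Hom_A(\fn^{-1},\:E(K^s))$ is inherited from $E(K^s)$ by postcomposition.

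Second, for part (ii), I would recall the standard construction of the connecting map $\delta \colon \Ho^0(K,\,E(K^s)) \to \Ho^1(K,\,\ntor{E})$: given $x \in E(K)$, lift it to some $f$ as in (i) and define $\delta(x)$ as the class of the $1$-cocycle
\begin{equation*}
c_f\colon G_K \to \Hom_A(\fn^{-1},\:E(K^s)), \quad g \mapsto g\cdot f - f.
\end{equation*}
By the very definition of the Kummer boundary $\AS_\fn = \delta$, this is exactly the cohomology class we are computing. What remains is to check that $c_f$ takes values in the subgroup $\ntor{E} = \Hom_A(\fn^{-1}/A,\:E(K^s))$, that is, that each $gf - f$ annihilates the submodule $A\subset \fn^{-1}$.

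Third, for this last verification, I would argue as follows. The difference $gf - f$ is $A$-linear because both $gf$ and $f$ are, so it suffices to show it vanishes on the single generator $1 \in A \subset \fn^{-1}$. This is immediate:
\begin{equation*}
(g\cdot f - f)(1) = g(f(1)) - f(1) = g(x) - x = 0,
\end{equation*}
since $x \in E(K)$ is Galois-invariant. Hence $gf - f$ factors through $\fn^{-1}/A$ as claimed, completing the proof of (ii). There is no real obstacle here beyond keeping the identifications straight; the only point deserving care is that the Galois action on $\Hom_A(\fn^{-1},E(K^s))$ is postcomposition (not conjugation), which is what makes the $A$-linearity of $gf - f$ automatic and reduces the kernel condition to a single evaluation at~$1$.
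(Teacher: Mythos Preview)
Your proposal is correct and follows exactly the approach the paper intends: the paper's own proof is the single sentence ``This follows from Lemma~\ref{torses} by elementary properties of group cohomology,'' and what you have written is precisely the unpacking of that sentence. There is nothing to add or change.
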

\begin{proof}
This follows from Lemma~\ref{torses} by elementary properties
of group cohomology.
\end{proof}

\begin{lem}\label{dmasprincipal}
For each proper principal ideal $(a) = \fa$ of the ring~$A$ which is not divisible by the characteristic of~$E$
we have a commutative diagram
\begin{equation*}
\begin{tikzcd}
& E(K) \arrow[dl,"\AS_{\fa}"'] \arrow[dr,"\AS_{a}"] & \\
\Ho^1(K,\:\ator{E}) \arrow[rr,"\Ho^1(\tori)","\cdisochar"'] & & \Ho^1(K,\:E[a])
\end{tikzcd}
\end{equation*}
where $\AS_{a}$ is the Artin--Schreier boundary homomorphism of the separable isogeny $a\colon E \to E$, and
$\tori\colon \ator{E} \isoarrow E[a]$ is an isomorphism defined by the formula $x\mapsto x(a^{-1})$.
\end{lem}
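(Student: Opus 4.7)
The plan is to deduce the commutativity from a morphism of short exact sequences relating the Kummer sequence of Lemma~\ref{torses} to the Artin--Schreier sequence of the isogeny $a\colon E\to E$, then read off the boundary maps.

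First I would use that $\fa = (a)$ is principal, so $\fa^{-1}$ is the free $A$-module of rank $1$ generated by $a^{-1}\in \F$. This means evaluation at $a^{-1}$ gives an $A$-linear isomorphism
\begin{equation*}
\Hom_A(\fa^{-1},\:E(K^s)) \isoarrow E(K^s),\qquad f \longmapsto f(a^{-1}).
\end{equation*}
Under this identification the evaluation at $1 = a\cdot a^{-1}$ (the second map in Lemma~\ref{torses}) is transported to multiplication by $a\colon E(K^s)\to E(K^s)$. Hence I obtain a morphism of short exact sequences of $G_K$-modules
\begin{equation*}
\begin{tikzcd}[column sep=2.5em]
0 \rar & \ator{E} \dar["\tori"] \rar & \Hom_A(\fa^{-1},\:E(K^s)) \dar["f\,\mapsto\,f(a^{-1})","\cdisochar"'] \rar & E(K^s) \dar[equal] \rar & 0 \\
0 \rar & E[a] \rar & E(K^s) \rar["a"'] & E(K^s) \rar & 0
\end{tikzcd}
\end{equation*}
where the bottom row is the defining sequence of the kernel of the separable isogeny $a$ used in \S\ref{astheory}.

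Next I would verify the commutativity of the left square directly: an element $x\in\ator{E} = \Hom_A(\fa^{-1}/A,\:E(K^s))$ is first sent to $x\circ\pi\in\Hom_A(\fa^{-1},\:E(K^s))$ by composition with the quotient map $\pi\colon\fa^{-1}\twoheadrightarrow\fa^{-1}/A$, and then evaluating at $a^{-1}$ produces $x(a^{-1}\bmod A) = \tori(x)\in E[a]$. The right square commutes by construction of the identification.

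Finally, the morphism of short exact sequences induces a commutative square of boundary maps in Galois cohomology,
\begin{equation*}
\begin{tikzcd}[column sep=4em]
E(K) \rar["\AS_{\fa}"] \dar[equal] & \Ho^1(K,\:\ator{E}) \dar["\Ho^1(\tori)","\cdisochar"'] \\
E(K) \rar["\AS_{a}"'] & \Ho^1(K,\:E[a]),
\end{tikzcd}
\end{equation*}
which is exactly the triangle in the statement. There is no real obstacle here; the only thing to be careful about is correctly identifying $\fa^{-1}$ as a principal $A$-module and tracking the evaluation maps so that the middle vertical arrow simultaneously trivialises the top row and recovers the isogeny $a$ on the bottom.
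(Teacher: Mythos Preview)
Your proof is correct and follows exactly the same approach as the paper: you construct the same morphism of short exact sequences (with the middle vertical arrow $f\mapsto f(a^{-1})$ and the identity on the right) and invoke naturality of boundary homomorphisms. The paper's version is terser, simply displaying the diagram and stating that the claim follows by naturality, whereas you additionally spell out why the squares commute.
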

\begin{proof}
We have a commutative diagram:
\begin{equation*}
\begin{tikzcd}
0 \rar & \ator{E} \dar["\tori"'] \rar & \Hom_A(\fa^{-1},\:E(K^s)) \dar["f \mapsto f(a^{-1})"] \rar & E(K^s) \dar[equals] \rar & 0 \\
0 \rar & E[a] \rar & E(K^s) \rar["a"] & E(K^s) \rar & 0.
\end{tikzcd}
\end{equation*}
The claim follows by naturality of boundary homomorphisms.
\end{proof}


\subsection{Drinfeld modules over local fields}\label{reschar}

In this section our base field~$K$ is assumed to be local.
We denote by $\OK$ the ring of integers of~$K$ and by
$\mK$ the maximal ideal of~$\OK$.

Pick a coefficient ring $A$ over $\Fq$. Let $E$ be a Drinfeld $A$-module
over $\Spec K$ and let $\carE\colon A \to K$ be its characteristic
homomorphism.

\begin{dfn}
The Drinfeld module~$E$ has \emph{finite residual characteristic}
when $\carE(A) \subset \OK$. The \emph{residual characteristic}~$\fpres$ of~$E$ is the ideal
\begin{equation*}
\fpres := \carE^{-1}(\mK).
\end{equation*}
This ideal is maximal because the residue field $\OK/\mK$ is finite.
\end{dfn}

The analogous situation for abelian varieties is when the
base field~$K$ is a non-archimedean local field. The 
residual characteristic of an abelian variety is just the
residual characteristic of the field~$K$.

From now on we suppose that the Drinfeld module~$E$ has finite residual characteristic.
It will be convenient for us to assemble the $\fp$-adic Tate modules $T_\fp E$, $\fp\ne\fpres$,
into a single object.
\begin{dfn}
The ring of \emph{restricted integral adeles} is defined by the formula
\begin{equation*}
\Aad = \varprojlim\nolimits_{\fn} A/\fn
\end{equation*}
where $\fn \subset A$ runs over proper ideals that are prime to~$\fpres$.
\end{dfn}
We have a natural isomorphism $\Aad = \prod_{\fp\ne\fpres} A_\fp$
where $A_\fp$ is the $\fp$-adic completion of~$A$.
Compared with the ring of integral adeles of the function field~$\F$
the ring $\Aad$ does not include the factors at the places $\fpres$ and $\infty$.

\begin{dfn}
The \emph{restricted adelic Tate module} $\Tad E$ is defined by the formula
\begin{equation*}
\Tad E = \varprojlim\nolimits_{\fn} \ntor{E}
\end{equation*}
where $\fn \subset A$ runs over proper ideals that are prime to~$\fpres$ and the transition maps are induced
by inclusions of the ideals.
\end{dfn}
The Tate module $\Tad E$ is a free module over the ring $\Aad$
and its rank coincides with the rank of the Drinfeld module~$E$.
The Galois action on $\Tad E$ is $\Aad$-linear.
We have a natural $\Aad$-linear Galois-equivariant isomorphism:
\begin{equation*}
\Tad E \isoarrow \prod\nolimits_{\fp\ne\fpres} T_\fp E.
\end{equation*}%

\begin{lem}\label{initial}
Consider the inclusion-ordered family $\cF$ of proper principal ideals $\fa \subset A$
that are prime to the residual characteristic~$\fpres$.
Then the natural homomorphism
\begin{equation*}
\Tad E \isoarrow \varprojlim\nolimits_{\fa \in \cF} \xtor{E}{\fa}
\end{equation*}
is an isomorphism.
\end{lem}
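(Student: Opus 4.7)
The plan is to prove the isomorphism by establishing that the family $\cF$ of proper principal ideals prime to~$\fpres$ is cofinal within the larger inclusion-ordered family $\cF'$ of all proper ideals of~$A$ prime to~$\fpres$. Once cofinality is in place, a standard fact about inverse limits (an inverse limit over a directed system agrees with the inverse limit over any cofinal subsystem) will immediately give the claim, since by definition $\Tad E = \varprojlim_{\fn \in \cF'} \xtor{E}{\fn}$.

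The cofinality step is where the only real content lies, and it rests on the finiteness of the Picard group of $\Spec A$, which was already invoked in the proof of Lemma~\ref{torses}. Concretely, given any $\fn \in \cF'$, I would choose a positive integer $i$ such that $\fn^i$ is principal; such an $i$ exists because the class of $\fn$ in $\mathrm{Pic}(\Spec A)$ has finite order. Then $\fn^i \subset \fn$, the ideal $\fn^i$ is proper and principal, and it remains prime to $\fpres$ because the set of prime ideals dividing a power of $\fn$ coincides with the set dividing $\fn$, so that $\fpres \nmid \fn$ implies $\fpres \nmid \fn^i$. Therefore $\fn^i \in \cF$ and dominates $\fn$ in the inclusion order, which establishes cofinality.

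For the inverse limit identification, I would check that the transition maps are compatible: an inclusion $\fa \subset \fn$ induces $\fa^{-1}/A \supset \fn^{-1}/A$, and restriction of $A$-module homomorphisms along this inclusion yields the transition map $\xtor{E}{\fa} \to \xtor{E}{\fn}$ used to define $\Tad E$. Since these maps are natural in the ideal, the subsystem indexed by $\cF$ is a subsystem in the usual sense, and cofinality gives the desired isomorphism of inverse limits.

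I do not anticipate any real obstacle: the argument is essentially a cofinality reduction, and the one nontrivial input (that every ideal class has finite order in $\mathrm{Pic}(\Spec A)$) is available because $A$ is a Dedekind domain of finite type over the finite field~$\fco$. The mildest care required is to confirm that "prime to $\fpres$" is preserved under taking powers, which follows at once from the unique factorization of ideals in the Dedekind domain~$A$.
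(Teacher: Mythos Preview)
Your proposal is correct: both you and the paper reduce the claim to showing that $\cF$ is cofinal (the paper says ``initial'', same thing for inverse limits ordered by inclusion) in the family of all proper ideals prime to~$\fpres$, and then invoke the standard fact about limits over cofinal subsystems.

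The one genuine difference is in how cofinality is established. You appeal to finiteness of $\mathrm{Pic}(\Spec A)$ to produce a principal power $\fn^i \subset \fn$. The paper uses a more elementary observation: since $\fn \not\subset \fpres$, there exists an element $a \in \fn \setminus \fpres$; this $a$ is automatically nonzero and a non-unit (as $\fn$ is proper), so $(a)$ is a proper principal ideal contained in~$\fn$, and $a \notin \fpres$ together with maximality of $\fpres$ gives $(a) \not\subset \fpres$, i.e.\ $(a)$ is prime to~$\fpres$. Your argument is perfectly valid, but it invokes a structural fact about~$A$ that is not actually needed here; the paper's route works for any Dedekind domain with a chosen maximal ideal, regardless of whether the class group is finite.
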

\begin{proof}
For each proper ideal $\fn\not\subset\fpres$ there is
a non-constant element $a \in \fn \setminus\fpres$.
Then $(a) \in \cF$, and as $(a) \subset \fn$ we conclude that the family
$\cF$ is initial in the family of all proper ideals not contained in $\fpres$. 
The claim follows.
\end{proof}

\section{The local Kummer pairing}\label{locmonpair}

Fix a finite field $\Fq$ of cardinality~$q$
and characteristic~$p$.
In this section our base field $K/\Fq$ is assumed to be local.
We denote by $\OK\subset K$ the ring of integers and
by $\mK \subset \OK$ the maximal ideal.
As usual we fix a separable closure $K^s/K$ and denote by
$G_K$ the corresponding absolute Galois group.

\subsection{Ramification subgroups}\label{ramsubgroups}

Let $I_K$ be the inertia subgroup of the absolute
Galois group $G_K$, and let $P_K$ be the wild inertia subgroup.

\begin{dfn}
We denote by $J_K$ the maximal quotient of $I_K$ that is abelian of exponent $p$:
\begin{equation*}
J_K := I_K^\ab/(I_K^\ab)^{\times p}.
\end{equation*}
\end{dfn}

The quotient group $I_K/P_K$ is procyclic of order prime to $p$,
so the fact that $P_K$ is a pro-$p$ group implies that
the surjection $I_K \twoheadrightarrow I_K/P_K$ is split. 
As the group $J_K$ is abelian $p$-torsion we conclude that the canonical morphism
$P_K \twoheadrightarrow J_K$ is surjective.

\medskip

%
The inertia subgroup $I_K$ carries a decreasing separated exhaustive filtration by closed normal subgroups $I_K^{\ram{u}}$,
$u \in \Q_{\geqslant 0}$,
the \emph{ramification filtration in upper numbering} 
\cite[Chapitre~IV, \S3, Remarque~1, p.~83]{corps}.
The wild inertia subgroup $P_K$ is the closure of the subgroup
$\bigcup_{u > 0} I_K^{\ram{u}}$.

The ramification filtration of $I_K$ induces a filtration of $J_K$.
As the group $J_K$ is abelian the Hasse--Arf theorem implies that the induced filtration is integrally indexed:
For each $u\geqslant 0$ the subgroup $J_K^u$ coincides with $J_K^\coi$, $\coi = \lceil u \rceil$.


\begin{lem}\label{ramcong}
For every integer $\coi \geqslant 0$ such that $\coi \equiv 0\pmod{p}$ we have 
\begin{equation*}
J_K^\coi = J_K^{\coi+1}.
\end{equation*}
\end{lem}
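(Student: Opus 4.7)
The lemma concerns only the abstract group-theoretic structure of $J_K$ together with its ramification filtration, and is independent of any Drinfeld module input. My plan is to pass through local class field theory in positive characteristic.

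First I would invoke local class field theory for the equal-characteristic local field~$K$: the Artin reciprocity map identifies $I_K^{\ab}$ with an appropriate completion of the unit group $\OK^\times$, compatibly with the upper-numbering ramification filtrations (see e.g.~\cite[Chap.~XV]{corps}): the subgroup $I_K^{\ab,u}$ corresponds to $\OK^\times$ when $u = 0$ and to $U^{(\lceil u\rceil)} := 1 + \mK^{\lceil u\rceil}$ when $u > 0$. Quotienting by $p$-th powers and observing that $\OK^\times/U^{(1)}\cong \rK^\times$ has order prime to~$p$, I obtain
\begin{equation*}
J_K \;\cong\; \OK^\times/(\OK^\times)^p \;\cong\; U^{(1)}/(U^{(1)})^p,
\end{equation*}
with $J_K^m$ identified to the image of $U^{(m)}$ for $m\geqslant 1$. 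The case $m = 0$ of the lemma is then immediate since $J_K^0 = J_K = J_K^1$: indeed the image of $U^{(1)}$ together with $(\OK^\times)^p$ already exhausts $\OK^\times$, because $\mu_{q-1}\subset (\OK^\times)^p$.

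For $m = pn$ with $n\geqslant 1$ the assertion $J_K^m = J_K^{m+1}$ reduces to the elementary unit-group inclusion
\begin{equation*}
U^{(pn)}\;\subseteq\;U^{(pn+1)}\cdot(U^{(1)})^p.
\end{equation*}
This I would verify using the characteristic-$p$ identity $(1+y)^p = 1 + y^p$. Given $1+x\in U^{(pn)}$, reduce $x\pi^{-pn}$ modulo $\mK$ to obtain $\olsi a\in\rK$; since the finite residue field $\rK$ is perfect, pick $b\in\OK$ with $\olsi b^{\,p} = \olsi a$. Then $(1+b\pi^n)^p = 1+b^p\pi^{pn}$ and $b^p\equiv a\pmod{\mK}$, so $(1+x)(1+b\pi^n)^{-p}\in U^{(pn+1)}$, as required.

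The only non-elementary ingredient is the filtration-compatibility of the Artin reciprocity map in equal characteristic, which is classical; once that is granted, the remainder is a one-line calculation. The role of $p\mid m$ becomes transparent from this viewpoint: the graded piece $U^{(pn)}/U^{(pn+1)}$ is hit surjectively by the $p$-power map from $U^{(n)}/U^{(n+1)}$, and this is precisely what kills the corresponding graded quotient of $U^{(1)}/(U^{(1)})^p$. An alternative route avoiding class field theory would be to dualize via Artin--Schreier theory, identifying $\Hom_{\mathrm{cts}}(G_K,\Fp)\cong K/\wp(K)$ with $\wp(x) = x^p - x$, and invoking the classical reduction that every class in $K/\wp(K)$ admits a representative of pole order either $0$ or prime to $p$ (obtained by the same Frobenius-surjectivity trick applied to a leading term $a\pi^{-pn}$). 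I expect the main technical point in either approach to be the precise bookkeeping matching upper-numbering indices with valuation indices.
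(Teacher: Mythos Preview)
Your proposal is correct and follows essentially the same route as the paper: both arguments invoke local class field theory to transport the upper-numbering ramification filtration on $I_K^{\ab}$ to the principal-unit filtration $\{U^{(n)}\}$ on $\OK^\times$, and then use perfectness of the residue field to see that the $p$-th power map surjects onto $U^{(pn)}/U^{(pn+1)}$. The only cosmetic difference is that the paper phrases the argument at the level of finite abelian $p$-torsion quotients $G=\Gal(L/K)$ and the reciprocity map $K^\times\to G$, whereas you work directly with the profinite identification $I_K^{\ab}\cong\OK^\times$; your explicit computation with $(1+b\pi^n)^p=1+b^p\pi^{pn}$ is exactly what the paper summarizes in one line as ``surjective since the residue field of~$K$ is perfect.''
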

\begin{proof}
Let $L\subset K^s$ be a finite Galois extension of~$K$
with Galois group~$G$ and upper index ramification filtration
$\{ G^u \}_{u\geqslant 0}$. Suppose that the inertia group $G^0$
is abelian $p$-torsion. We need to show that $G^{pi} = G^{pi+1}$
for each $i \geqslant 0$.

We are free to assume that $\smash{G^0} = G$.
The group~$G$ is abelian, so we have the reciprocity morphism
$\omega\colon K^\times \twoheadrightarrow G$ of local class field theory.
This morphism transforms the $n$-unit filtration $\smash{\{U_K^{(n)}\}_{n\geqslant 0}}$
to the ramification filtration.
For each $i \geqslant 0$ the $p$-th power map induces a morphism
\begin{equation*}
U_K^{(i)} \twoheadrightarrow U_K^{(pi)}/U_K^{(pi+1)}.
\end{equation*}
This is surjective since the residue field of~$K$ is perfect.
Our claim follows.
\end{proof}


\subsection{Local pairing}\label{mainest}

Fix a Drinfeld $A$-module $\Eg$ over $\Spec \OK$.
By this we mean not only that $\Eg$ can be defined
by a homomorphism $\varphi\colon A \to \OK[\tau]$ but also that
the reduction of $\varphi$ modulo $\mK$ is a Drinfeld module
of the same rank as over $K$.

The Drinfeld module $\Eg$ has finite residual characteristic,
denoted by $\fpres$ as usual.
Let $K^\unr\subset K^s$ be the maximal unramified extension
and let $\fn\subset A$ be a proper ideal not divisible by $\fpres$.
Since $\Eg$ is a Drinfeld module over $\Spec\OK$ the
$\fn$-torsion of $\Eg$ is contained in $\Eg(K^\unr)$.
Hence by \S\ref{predm} we have the Kummer pairing
$\kumn{\fn}\colon \Eg(K^\unr) \times I_K \to \ntor{\Eg}$.
%
\begin{dfn}\label{dfnlocpair}
The \emph{local restricted adelic Kummer pairing} 
\begin{equation*}
\kumadv\colon \Eg(K^\unr) \times I_K \to \Tad \Eg
\end{equation*}
is the limit of the Kummer pairings $\kumn{\fn}$ over 
the inclusion-ordered system of
proper ideals $\fn$ not divisible by the residual characteristic $\fpres$.
\end{dfn}
Since the Tate module $\Tad\Eg$ is $p$-torsion
the pairing $\kumadv$ factors through the maximal quotient $J_K = I_K^\ab/(I_K^\ab)^{\times p}$
that is abelian of exponent~$p$.


Let $v\colon K \twoheadrightarrow \Z \cup\{\infty\}$ be the normalized
valuation of the base field~$K$.
Every isomorphism of $\Fq$-module schemes $\Eg \isoarrow \Ga$ over $\Spec \OK$
induces
a valuation function on $\Eg(K)$ by transport of structure from $\Ga(K) = K$.
The result is independent of choices 
because the choices differ by multiplication by an integral unit on the side of $\Ga$.
We denote the resulting valuation function by~$v$
as well.

\begin{dfn}\label{dfnseminorm}
For every $\lv \in \Eg(K)$ we set
\begin{equation*}
\infn{\lv} = \max\{0, \:-v(\lv)\}.
\end{equation*}
\end{dfn}
Poonen~\cite{poonen} introduced the notion of canonical local height on
the $A$-module $\Eg(K)$.
Since the Drinfeld module~$\Eg$
has good reduction
Proposition~4~(4) of \cite[\S3]{poonen} implies that
the map $\infn{\cdot}\colon \Eg(K) \to \Z_{\geqslant 0}$ is exactly the canonical local height.
This map satisfies the ultrametric inequality:
\begin{equation*}
\infn{\lv + \lv'} \leqslant \max\{ \infn{\lv}, \:\infn{\lv'} \}.
\end{equation*}
However $\infn{\lv} = 0$ for every $\lv\in \Eg(\OK)$
so the height $\infn{\cdot}$ is only a seminorm on $\Eg(K)$.
This seminorm is $\infty$-adic in the following sense:

\begin{lem}\label{valrel}
For each $\lv\in \Eg(K)$
and each $a \in A$ we have an equality
\begin{equation*}
\infn{a\lv} = |a|_\infty^s \, \infn{\lv}
\end{equation*}
where $s$ is the rank of the Drinfeld module~$\Eg$.
\end{lem}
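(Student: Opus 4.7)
The plan is to compute both sides directly from the $\tau$-polynomial representing multiplication by $a$. Fix an $\Fq$-linear isomorphism $\Eg \isoarrow \Ga$ over $\Spec\OK$; the action of $a$ then becomes a $\tau$-polynomial $\varphi_a = \alpha_0 + \alpha_1\tau + \dotsc + \alpha_N\tau^N$ with coefficients $\alpha_i\in\OK$, so that $a\lv$ is realized by $\alpha_0\lv + \alpha_1\lv^q + \dotsc + \alpha_N\lv^{q^N} \in K$ for $\lv \in K$. Two structural facts drive the argument. First, since $\Eg$ has rank~$s$, the multiplication-by-$a$ isogeny on $\Ga$ has degree $|a|_\infty^s$, equivalently $q^N = |a|_\infty^s$ (with the normalization of \S\ref{coeffring} this is the standard identity $q^{\deg_\tau\varphi_a} = |a|_\infty^s$). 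Second, since $\Eg$ has good reduction, the reduction of $\varphi_a$ modulo $\mK$ still has $\tau$-degree~$N$, so the leading coefficient $\alpha_N$ is a unit in $\OK$.

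The lemma then follows by a short case split on the sign of $v(\lv)$. If $\lv \in \OK$ then $\infn{\lv} = 0$, and since all $\alpha_i$ are integral one has $a\lv \in \OK$ as well, so both sides vanish. If $v(\lv) < 0$, compare the valuations of the individual summands $\alpha_i\lv^{q^i}$: for $i < N$ integrality of $\alpha_i$ gives
\begin{equation*}
v(\alpha_i\lv^{q^i}) \geqslant q^i v(\lv) > q^N v(\lv),
\end{equation*}
while $\alpha_N\in\OK^\times$ yields $v(\alpha_N\lv^{q^N}) = q^N v(\lv)$. The leading term strictly dominates, so the ultrametric inequality forces
\begin{equation*}
v(a\lv) = q^N v(\lv) = |a|_\infty^s \, v(\lv).
\end{equation*}
Negating and applying Definition~\ref{dfnseminorm} produces the desired equality $\infn{a\lv} = |a|_\infty^s\,\infn{\lv}$.

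I expect the only delicate point to be the identification $q^N = |a|_\infty^s$, as this is where the $\tau$-side degree must be matched against the $\infty$-adic normalization chosen in \S\ref{coeffring}. Once that standard fact about the degree of the multiplication isogeny on a rank-$s$ Drinfeld module is invoked, the remainder is a purely mechanical ultrametric calculation, and the two ingredients, good reduction (which provides the unit leading coefficient) and the rank hypothesis (which fixes the $\tau$-degree) combine cleanly.
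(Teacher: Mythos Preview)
Your proposal is correct and follows essentially the same approach as the paper: identify $\Eg$ with $\Ga$ over $\Spec\OK$, use that good reduction forces the top coefficient of $\varphi_a$ to be a unit, perform the ultrametric comparison of summands when $v(\lv)<0$, and invoke the degree identity $q^N=|a|_\infty^s$. The paper additionally isolates the trivial case $a=0$ and spells out the computation $q^N=|a|_\infty^s$ via $N=-s[\kinf:\Fq]v_\infty(a)$ and $|a|_\infty=|\kinf|^{-v_\infty(a)}$, but otherwise the arguments coincide.
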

The map $x \mapsto |x|_\infty^s$ is an $\infty$-adic
absolute value on the local field $\Finf$ albeit its normalization
differs from the one of \S\ref{coeffring} when $s > 1$.

\begin{proof}[Proof of Lemma~\ref{valrel}]
This follows from Proposition~2 and Proposition~4~(4) of~\cite{poonen}.
For the reader's convenience let us give a direct argument.
The claim is clear when $a = 0$ and holds when $\infn{\lv} = 0$ since
$\Eg(\OK)$ is an $A$-submodule of~$\Eg(K)$. We are thus free to assume that
$a \ne 0$ and $\infn{\lv} > 0$.

We identify the $\Fq$-module scheme $\Eg$ with $\Ga$ over $\Spec \OK$.
The induced $A$-module scheme structure is described by a homomorphism
$\varphi\colon A \to \OK[\tau]$.
Write $\varphi(a) = \alpha_0 + \alpha_1 \, \tau + \dotsc + \alpha_n \, \tau^n$
with $\alpha_n \ne 0$. The top coefficient
$\alpha_n$ is a unit since $\Eg$ is a Drinfeld module over
$\Spec \OK$. 
Hence 
$v(\alpha_n \, \lv^{q^n}) = q^n \, v(\lv)$.
Combining this with the estimates
$v(\alpha_i \, \lv^{q^i}) \geqslant q^i \, v(\lv)$, $i < n$,
and the assumption
$v(\lv) < 0$ we deduce an equality
\begin{equation*}
v(a \, \lv) = q^n \, v(\lv).
\end{equation*}
The degree $n$ of the polynomial $\varphi(a)$ is expressed by a formula
\begin{equation*}
n = -s \, [\kinf:\Fq] \, v_\infty(a)
\end{equation*}
where
$v_\infty$ is the normalized valuation of the local field $\Finf$.
The absolute value of \S\ref{coeffring} is given by the formula
\begin{equation*}
|a|_\infty = |\kinf|^{-v_\infty(a)}.
\end{equation*}
As $|\kinf| = q^{[\kinf:\Fq]}$ we deduce that
$q^n = |a|_\infty^s$ and the claim follows.
\end{proof}


\newcommand{\KH}{}
\newcommand{\ind}{n}
\begin{thm}\label{kumimage}
Let $\lv \in \Eg(K)$ be an element,
and consider the homomorphism
\begin{equation*}
\kumadl\colon J_K \to \Tad \Eg.
\end{equation*}
Then the following holds:
\begin{statements}
\item\label{rambound}
This homomorphism vanishes on the ramification subgroup $J^\ram{\infn{\lv}+1}_K$.
In particular it vanishes altogether when $\infn{\lv} = 0$.

\item\label{primetop}
If the height $\infn{\lv}$ is prime to~$p$ 
then the homomorphism $\kumadl$ maps the ramification subgroup $\smash{J^\ram{\infn{\lv}}_K}$
surjectively onto~$\Tad \Eg$. 
\end{statements}
\end{thm}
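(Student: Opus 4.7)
The plan is to reduce both statements to the classical theory of Artin--Schreier $\Fq$-extensions via Lemma~\ref{asfactor}. By Lemma~\ref{initial} the pairing $\kumadv$ is the inverse limit of the principal-level pairings $\kuml{(a)}\colon J_K \to \xtor{\Eg}{(a)}$, where $(a)\subset A$ runs over proper principal ideals prime to $\fpres$, and Lemma~\ref{dmasprincipal} identifies each of these with the Artin--Schreier boundary of the separable isogeny $a\colon\Eg\to\Eg$. Good reduction of $\Eg$ combined with $(a)$ prime to $\fpres$ ensures that the defining $\tau$-polynomial $\varphi(a) = \alpha_0 + \alpha_1\tau + \dotsc + \alpha_n\tau^n$ has both $\alpha_0 = \carE(a)$ and the top coefficient $\alpha_n$ in $\OK^\times$; the Newton polygon of $\varphi(a)(y) = 0$ then shows that $\Eg[a]\subset\Eg(\OK^\unr)$ with every nonzero element a unit, so $I_K$ acts trivially on $\Eg[a]$.

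Fix an $\Fq$-linear surjection $f\colon\Eg[a]\twoheadrightarrow\Fq$, which is automatically $I_K$-equivariant. Lemma~\ref{asfactor} applied over $K^\unr$ supplies a scalar $u\in K^\unr$ such that $f\circ\kuml{(a)}$ coincides with the Artin--Schreier boundary of the equation $y^q - y = u\lv$. The Moore determinant identity
\begin{equation*}
M(w_1,\dotsc,w_n,v) = M(w_1,\dotsc,w_n)\cdot\prod_{w'\in f^{-1}(1)} w'
\end{equation*}
simplifies the formula of Lemma~\ref{asfactor} to $u = \alpha_n^{-1}\prod_{w'\in f^{-1}(1)}(w')^{-q}$; since $\alpha_n$ and each $w'\in f^{-1}(1)$ are units in $\OK^\unr$, we obtain $v(u) = 0$, whence $-v(u\lv) = \infn{\lv}$.

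By the classical theory of Artin--Schreier $\Fq$-extensions over $K^\unr$, the homomorphism $f\circ\kuml{(a)}\colon J_K \to \Fq$ has conductor at most $-v(u\lv) + 1 = \infn{\lv} + 1$, with equality whenever $\infn{\lv}$ is prime to $p$---in that case the pole of $u\lv$ admits no reduction of the form $u\lv\mapsto u\lv + w^q - w$ with $w\in K^\unr$, since $\infn{\lv}/q\notin\Z$. Thus $f\circ\kuml{(a)}$ vanishes on $J_K^{\infn{\lv}+1}$ for every $f$, and under the prime-to-$p$ hypothesis its restriction to $J_K^{\infn{\lv}}$ is surjective. Because $\Fq$-linear functionals separate points on $\Eg[a]$, the pairing $\kuml{(a)}$ itself annihilates $J_K^{\infn{\lv}+1}$ and, under the hypothesis of~\ref{primetop}, maps $J_K^{\infn{\lv}}$ onto $\Eg[a]$. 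Passing to the inverse limit yields~\ref{rambound}; for~\ref{primetop} one observes that the compact image of $J_K^{\infn{\lv}}$ under $\kumadl$ surjects onto every finite-level quotient and therefore equals $\Tad\Eg$. The main obstacle is securing the sharp identity $v(u) = 0$ via the Moore determinant simplification and the integrality of $\Eg[a]$; without it the conductor bound would acquire an extraneous correction and the sharpness needed for~\ref{primetop} would fail.
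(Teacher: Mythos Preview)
Your proof is correct and follows essentially the same route as the paper: reduce to principal levels via Lemmas~\ref{initial} and~\ref{dmasprincipal}, factor through linear forms using Lemma~\ref{asfactor}, verify the scaling factor $u$ is a unit, and invoke the classical Artin--Schreier conductor calculation. Your Moore determinant simplification $M(w_1,\dotsc,w_n,v)/M(w_1,\dotsc,w_n)=\prod_{w'\in f^{-1}(1)}w'$ is a pleasant way to make $v(u)=0$ explicit; the paper instead argues that $\Eg[a]$ injects into $\Eg(k)$, so the Moore determinants themselves are units.

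One point deserves care. The paper composes with $\Fp$-linear forms $f\colon\Eg[a]\twoheadrightarrow\Fp$ rather than $\Fq$-linear ones, and this matters for part~\ref{primetop}. With target $\Fp$, ``nonzero on $J_K^{\infn{\lv}}$'' is the same as ``surjective onto $\Fp$'', and the image of $J_K^{\infn{\lv}}$ in $\Eg[a]$ is automatically an $\Fp$-subspace, so separating by $\Fp$-functionals is immediate. With your $\Fq$-forms two extra facts are needed: (i) the $\Fq$-Artin--Schreier extension $y^q-y=u\lv$ with $\gcd(\infn{\lv},p)=1$ has \emph{full} Galois group $\Fq$ with a single break at $\infn{\lv}$ (your ``conductor $=\infn{\lv}+1$'' alone only gives nontriviality, and the justification ``$\infn{\lv}/q\notin\Z$'' only rules out reduction, not proper subextensions); and (ii) if an $\Fp$-subspace $W\subset\Eg[a]$ satisfies $f(W)=\Fq$ for every $\Fq$-linear $f$, then $W=\Eg[a]$ (true via the trace pairing, but not literally ``functionals separate points''). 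Both facts hold, so your argument goes through, but the paper's choice of $\Fp$ sidesteps them entirely.
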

\begin{proof}
By Lemma~\ref{initial} and Lemma~\ref{dmasprincipal}
it is enough to prove the claims for every \KH homomorphism
$\kuml{a}\colon J_K \to \Eg[a]$ where $a \in A$
runs over non-constant elements that are prime to the residual characteristic~$\fpres$ of~$\Eg$.
 
The height $\infn{\lv}$,
the group $J_K$ and the \KH homomorphism $\kuml{a}$
remain unchanged when we replace the base field $K$ by an unramified extension.
As the Drinfeld module $\Eg$ has good reduction and the element $a$ is prime to~$\fpres$
we are free to assume that the torsion module
$\Eg[a]$ is contained in $\Eg(K)$, and consequently in $\Eg(\OK)$.

An $\Fp$-vector subspace $V\subset \Eg[a]$ is zero
if and only if $f(V) = 0$ for each nonzero linear form
$f\colon \Eg[a] \twoheadrightarrow \Fp$.
Likewise $V = \Eg[a]$
if and only if $f(V) = \Fp$ for every such form~$f$.
It is thus enough to prove our claims after composing the \KH homomorphism
$\kuml{a}$ with every $f$.

From now on we identify the $\Fq$-module scheme $\Eg$ with $\Ga$ over $\Spec \OK$.
The resulting $A$-module scheme structure on $\Ga$ is given by a homomorphism
$\varphi\colon A \to \OK[\tau]$.

Let $f\colon \Eg[a] \twoheadrightarrow \Fp$ be a nonzero linear form.
According to Lemma~\ref{asfactor} there is an element $u \in \Ga(K)$ such that
\begin{equation*}
f \circ \kuml{\varphi(a)} = \kumu{u\lv}{p}.
\end{equation*}
This element is given by a formula with Moore determinants $M(-)$:
\begin{equation*}
u = \alpha^{-1}\:\Big( \frac{M(w_1,\,\dotsc,\,w_\ind)}{M(w_1,\,\dotsc,\,w_\ind,\,v)}\Big)^p
\end{equation*}
Here $\alpha$ is the top coefficient of the polynomial $\varphi(a)$ and
$w_1, \dotsc,\,w_\ind,\,v$ is a suitable $\Fp$-basis of $\Eg[a]$.

By our assumption $\Eg$ is a Drinfeld module over $\Spec\OK$.
This implies that the top coefficient $\alpha$ is an integral unit.
Since $a$ is prime to the residual characteristic 
the torsion module $\Eg[a] \subset \Eg(\OK)$ maps injectively to $\Eg(k)$ where $k$
is the residue field of $\OK$.
In particular the image of the basis $w_1,\dotsc,w_\ind,v$ in $\Eg(k)$
remains $\Fp$-linearly independent.
Hence the corresponding
Moore determinants are nonzero.
We conclude that the element $u$ is an
integral unit. It follows that $v(u\lv) = v(\lv)$.

Consider the Artin--Schreier polynomial $X^p - X = u\lv$.
The Galois group $G$ of its splitting field is the image of the absolute Galois group 
under the homomorphism $\kumu{u\lv}{p}$.
The upper index ramification filtration of $G$ is given by images of absolute ramification subgroups:
\begin{equation*}
G^{\ram{i}} = \kum{u\lv}{I_K^{\ram{i}}}{p}.
\end{equation*}
This filtration was calculated by Hasse, see Thomas~\cite{thomas},
Proposition~2.1 and the following paragraph.
Set $b = \max \{0,\,-v(u\lv)\}$. 
Then the subgroups $G^{\ram{i}}$ vanish for $i > b$,
and if the integer $b$ is prime to~$p$ 
then $G^{\ram{b}} = G = \Fp$.
Our theorem follows since $v(u\lv) = v(\lv)$ as shown above.
\end{proof}

\subsection{Open image theorem}

The following arguments involve passage from the base field~$K$ to a finite separable extension~$L$.
On account of this we have to fix a few conventions. First, it will be more convenient
for us to view the \KH homomorphism $\kumadl$ as a morphism
from the inertia subgroup $I_K$ rather than its quotient $J_K$.

Second, we need to be careful with the canonical local height
on the $A$-module $\Eg(L)$. This involves the normalized valuation of~$L$
and so differs from the height on 
the submodule $\Eg(K)$
by an integer factor, the ramification index of $L/K$. We will
denote the height on $\Eg(L)$ by $\infnx{L}{\cdot}$.

\begin{lem}\label{division}
Let $\lv \in \Eg(K) \setminus \Eg(\OK)$ be an element.
Suppose that the homomorphism
$\kumadl\colon I_K \to \Tad \Eg$
is \emph{not} surjective. 
Then there are
\begin{itemize}
\item
a non-constant element $a \in A$ prime to the residual characteristic $\fpres$ of~$\Eg$,

\item
a finite separable extension $L/K$, and

\item
an element $\lv' \in \Eg(L)\setminus \Eg(\cO_L)$
\end{itemize}
such that
\begin{equation*}
a \lv' = \lv, \quad
v_p(\infnx{L}{\lv'}) < v_p(\infnx{K}{\lv}).
\end{equation*}
Here
$v_p\colon \Q \twoheadrightarrow \Z\cup\{\infty\}$ denotes the normalized $p$-adic valuation.
\end{lem}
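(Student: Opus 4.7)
The proof has three stages: extract a suitable $a$ from the non-surjectivity hypothesis, use it to construct $L$ and $\lv'$, and verify the valuation inequality.

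In the first stage, combine Lemma \ref{initial} with Lemma \ref{dmasprincipal} to express $\kumadl$ as the inverse limit, over proper principal ideals $\fa=(a)$ prime to $\fpres$, of the Artin--Schreier homomorphisms $\kuml{a}\colon I_K \to \Eg[a]$. Each $\Eg[a]$ is finite and the transition maps between them are surjective, so the image of $\kumadl$ is the inverse limit of the images $H_a \subseteq \Eg[a]$. If every $H_a$ were $\Eg[a]$, then $\kumadl$ would be surjective; since it is not, some non-constant $a \in A$ prime to $\fpres$ satisfies $H_a \subsetneq \Eg[a]$.

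In the second stage, replace $K$ by its unramified extension $K(\Eg[a])$, which is unramified because $\Eg$ has good reduction and $a$ is prime to $\fpres$. This leaves $I_K$, $\lv$, $\infnx{K}{\lv}$, and $H_a$ unchanged, so we may assume $\Eg[a]\subset \Eg(K)$. Pick any $\lv' \in \Eg(K^s)$ with $a\lv' = \lv$, and set $L := K(\lv')$. Rationality of $\Eg[a]$ makes $L/K$ Galois: the cocycle $g\mapsto g\lv'-\lv'$ identifies $\Gal(L/K)$ with the image of $\kum{\lv}{\ }{a}\colon G_K \to \Eg[a]$, and its inertia subgroup with $H_a$. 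Writing $n := \deg_\tau \varphi(a)$ and $q = p^f$, the group $\Eg[a]$ has order $q^n = p^{nf}$; since $H_a$ is a proper subgroup, $e_{L/K} = |H_a| = p^k$ with $k < nf$.

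In the third stage, $\lv' \notin \Eg(\cO_L)$ (otherwise $\lv = a\lv' \in \Eg(\cO_L)\cap \Eg(K) = \Eg(\OK)$), so $v_L(\lv') < 0$. The argument from the proof of Lemma \ref{valrel}, valid over $\cO_L$ because the top coefficient of $\varphi(a)$ remains a unit, yields $v_L(\lv) = q^n\,v_L(\lv')$, whence
\begin{equation*}
\infnx{L}{\lv'} = \frac{e_{L/K}\,\infnx{K}{\lv}}{q^n}.
\end{equation*}
Taking $p$-adic valuations, $v_p(\infnx{L}{\lv'}) = v_p(e_{L/K}) + v_p(\infnx{K}{\lv}) - nf < v_p(\infnx{K}{\lv})$, as required. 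The main obstacle is the identification of $e_{L/K}$ with the size of the Kummer image on inertia: this forces the preliminary passage to the unramified extension making $\Eg[a]$ rational and $L/K$ Galois. Once that identification is in place, the remaining steps are routine valuation bookkeeping.
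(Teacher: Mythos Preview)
Your proof is correct and follows essentially the same approach as the paper: both extract a non-constant $a$ with $\kuml{a}$ non-surjective via Lemmas~\ref{initial} and~\ref{dmasprincipal}, pass to an unramified extension to make $\Eg[a]$ rational, take $L$ to be the Kummer extension generated by an $a$-th root $\lv'$ of $\lv$, identify the ramification index $e_{L/K}$ with the size of the inertia image in $\Eg[a]$, and conclude via the height relation $|a|_\infty^s\,\infnx{L}{\lv'} = e_{L/K}\,\infnx{K}{\lv}$ (your $q^n$ equals the paper's $|a|_\infty^s$). The only cosmetic differences are that you spell out the inverse-limit argument for the existence of $a$ and the check $\lv'\notin\Eg(\cO_L)$ a bit more explicitly than the paper does.
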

\begin{proof}
Lemma~\ref{initial} and Lemma~\ref{dmasprincipal}
imply that
there is a non-constant element $a \not\in\fpres$ such that the \KH homomorphism
$\kuml{a}\colon I_K \to \Eg[a]$ is not surjective.

The height $\infn{\lv}$,
the inertia group $I_K$ and the \KH homomorphism $\kuml{a}$
remain unchanged when we replace the base field $K$ by an unramified extension.
We are thus free to assume that the torsion scheme $\Eg[a]$ is constant, 
or in other words, that all the $K^s$-points of $\Eg[a]$ are defined over the field $K$.
%
Under this assumption the \KH homomorphism $\kuml{a}$ is defined on the absolute Galois group $G_K$,
see \S\ref{astheory}.
Its kernel determines a finite abelian $p$-torsion extension $L/K$.

Let $I \subset \Eg[a]$ be the image of the inertia subgroup~$I_K$.
The ramification index $e$ of the extension $L/K$ equals the cardinality of $I$.
By our choice of the absolute value $|\cdot|_\infty$ in \S\ref{coeffring}
we have an equality
$|A/(a)| = |a|_\infty$.
So the torsion module $\Eg[a]$ has cardinality
$|a|_\infty^s$, and the assumption $I \ne \Eg[a]$ implies that
$e < |a|_\infty^s$.

By construction there is an element $\lv' \in \Eg(L)$ such that
$a \, \lv' = \lv$.
We have  $\infnx{L}{\lv} = e \, \infnx{K}{\lv}$
so Lemma~\ref{valrel} shows that
\begin{equation*}
|a|_\infty^s \, \infnx{L}{\lv'} = e \, \infnx{K}{\lv}.
\end{equation*}
The integers $e$ and $|a|_\infty$ are powers of~$p$
so the strict inequality $e < |a|_\infty^s$
implies a strict inequality of $p$-adic valuations
$v_p(\infnx{L}{\lv'}) < v_p(\infnx{K}{\lv})$
as claimed. 
\end{proof}

\pagebreak
\begin{thm}\label{kummeropen}
For each element $\lv \in \Eg(K) \setminus \Eg(\OK)$ the image of the homomorphism
$\kumadl\colon I_K \to \Tad \Eg$
is open. 
\end{thm}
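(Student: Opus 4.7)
The plan is to apply Lemma~\ref{division} repeatedly as a descent step. Set $\lv_0 := \lv$ and $K_0 := K$; proceeding inductively, at stage~$i$ either $\kum{\lv_i}{\ }{\ad}\colon I_{K_i}\to\Tad\Eg$ is surjective (in which case I stop), or Lemma~\ref{division} produces a finite separable extension $K_{i+1}/K_i$, a non-constant element $a_i\in A$ prime to $\fpres$, and an element $\lv_{i+1}\in \Eg(K_{i+1})\setminus\Eg(\cO_{K_{i+1}})$ satisfying $a_i\lv_{i+1}=\lv_i$ and $v_p(\infnx{K_{i+1}}{\lv_{i+1}})<v_p(\infnx{K_i}{\lv_i})$. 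Because $\infnx{K_i}{\lv_i}$ is a positive integer at every stage (by the condition $\lv_i\notin\Eg(\cO_{K_i})$), the value $v_p(\infnx{K_i}{\lv_i})$ lies in $\Z_{\geqslant 0}$ and strictly decreases, so the process terminates after at most $v_p(\infn{\lv})+1$ steps. I obtain an integer $n\geqslant 0$ and a nonzero element $b := a_0a_1\cdots a_{n-1}\in A$, prime to $\fpres$, with $b\lv_n = \lv$ and $\kum{\lv_n}{\ }{\ad}\colon I_{K_n}\to\Tad\Eg$ surjective.

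Next I would check the functoriality of the Kummer boundary with respect to division: whenever $a\lv'=\lv$ with $\lv'\in\Eg(L)$, one has $\kum{\lv}{g}{\ad} = a\cdot\kum{\lv'}{g}{\ad}$ for every $g\in G_L$. This follows directly from Lemma~\ref{dmasdesc}: given an $A$-linear $f\colon\fn^{-1}\to\Eg(K^s)$ with $f(1)=\lv'$, the map $f'\colon t\mapsto a\cdot f(t)$ is also $A$-linear and satisfies $f'(1)=\lv$, so the representing cocycle becomes $g\mapsto g\circ f'-f' = a\cdot(g\circ f - f)$. Iterating along the tower $K_0\subset K_1\subset\dotsb\subset K_n$ yields $\kum{\lv}{g}{\ad} = b\cdot\kum{\lv_n}{g}{\ad}$ for every $g\in G_{K_n}$, and in particular for every $g\in I_{K_n}=I_K\cap G_{K_n}$, which is an open subgroup of $I_K$ of index bounded by the ramification index of $K_n/K$.

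To conclude, I restrict $\kumadl$ to the open subgroup $I_{K_n}\subset I_K$: by the surjectivity of $\kum{\lv_n}{\ }{\ad}$, its image equals $b\cdot\Tad\Eg$. Since $b$ is a nonzero element of the Dedekind domain~$A$, it is a unit in $A_\fp$ for all but finitely many primes $\fp\ne\fpres$, so $b\cdot T_\fp\Eg = T_\fp\Eg$ for cofinitely many~$\fp$; at the remaining finitely many primes $b\cdot T_\fp\Eg$ is a finite-index submodule of $T_\fp\Eg$. Therefore $b\cdot\Tad\Eg$ is an open subgroup of $\Tad\Eg$ contained in the image of $\kumadl$, and the image is itself open. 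The main obstacle in this plan is purely formal: keeping the functoriality of the Kummer boundary consistent across a tower of fields. All genuine difficulty has been concentrated in Lemma~\ref{division}.
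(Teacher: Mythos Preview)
Your proof is correct and follows essentially the same approach as the paper: iterate Lemma~\ref{division} until the Kummer homomorphism becomes surjective, using the strict descent of $v_p(\infnx{K_i}{\lv_i})$ in $\Z_{\geqslant 0}$ to guarantee termination, then use $A$-linearity of the pairing in the first argument to transport openness back along the tower. The paper phrases the inductive step as a reduction (``it suffices to prove the theorem for $\lv'$ over $L$'') rather than building the whole tower and product $b = a_0\cdots a_{n-1}$ explicitly, and it leaves the openness of $a\cdot\Tad\Eg$ implicit; your explicit verification that $b\cdot\Tad\Eg$ is open (unit at almost all $\fp$, finite index at the rest) is a welcome addition but not a genuinely different idea.
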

\begin{proof}
Suppose that the \KH homomorphism is not surjective.
Invoking Lemma~\ref{division} we obtain a non-constant element $a \in A\setminus\fpres$,
a finite separable extension $L/K$
and an element $\lv' \in \Eg(L)$ such that
$a \, \lv' = \lv$. 
%
As the pairing $\kumadv$ is $A$-linear in the first argument we deduce that
\begin{equation*}
a \kumad{\lv'}{I_L} = \kumad{\lv}{I_L}.
\end{equation*}
The right hand side has finite index in $\kumad{\lv}{I_K}$. 
Hence it is enough to prove the theorem with $\lv$
replaced by $\lv'$ and $K$ replaced by~$L$.

Lemma~\ref{division} also shows that we have a strict inequality of $p$-adic
valuations:
\begin{equation*}
v_p(\infnx{L}{\lv'}) < v_p(\infnx{K}{\lv}).
\end{equation*}
The height $\infnx{L}{\cdot}$ takes only integer values,
so the valuation $v_p(\infnx{L}{\lv'})$ is bounded below by zero.
Hence after repeating our argument finitely many times we arrive
at a situation where the \KH homomorphism $\kumadu{\lv'}$
is onto.
\end{proof}

\section{Applications}

We retain the notation and the conventions of~\S\ref{locmonpair}.
In particular our base field~$K$ is a local field over
a finite field $\Fq$ of cardinality~$q$ and characteristic~$p$.
We fix a separable closure $K^s/K$, denote by $G_K$ the corresponding
absolute Galois group, by $I_K$ the inertia subgroup and by $J_K$ the
maximal quotient of $I_K$ that is abelian of exponent $p$, see \S\ref{ramsubgroups}

%

\subsection{The conductor of a Drinfeld module}\label{conductor}

Let $A$ be a coefficient ring over~$\Fq$ and
let $E$ be a Drinfeld $A$-module of stable reduction over $\Spec K$.
Drinfeld's theory of Tate uniformization~\cite[\S7]{drinfeld-ell}
represents $E$ as an analytic quotient of a Drinfeld module $\Eg$
over $\Spec\OK$ by a Galois-invariant \emph{lattice} $\Lat\subset \Eg(K^s)$,
a finitely generated projective $A$-submodule which is discrete with respect to the adic topology.
We thus have an analytic morphism
$e_\Lat\colon \Eg \to E$ extending to a short exact sequence:
\begin{equation*}
\begin{tikzcd}
0 \rar & \Lat \rar & \Eg \rar["e_\Lat"] & E \rar & 0.
\end{tikzcd}
\end{equation*}
This sequence is determined functorially by the Drinfeld module~$E$.
We will refer to $\Lat$ as the \emph{period lattice} of~$E$
and to $e_\Lat$ as the \emph{exponential map} of~$\Lat$.
A~detailed exposition of Tate uniformization theory can be found in a recent book of Papikian \cite[\S6.2]{papikian}.

Throughout this section we make an additional assumption: 
The lattice $\Lat$ is \emph{defined over~$K$}, which is to say, $\Lat\subset\Eg(K)$.
We will study the action of inertia on the Tate modules of~$E$.

\begin{lem}\label{quotchar}
The Drinfeld modules $\Eg$ and $E$ have the same characteristic homomorphism.
\end{lem}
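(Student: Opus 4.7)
The plan is to exploit the fact that the exponential map $e_\Lat \colon \Eg \to E$ is an $A$-equivariant analytic morphism whose kernel $\Lat$ is discrete. Discreteness ensures that $\Lat$ does not accumulate at the origin, so near the identity $e_\Lat$ is an isomorphism of formal (or analytic) neighborhoods. In particular the induced map on tangent spaces at zero,
\begin{equation*}
\Lie(e_\Lat) \colon \Lie_\Eg \isoarrow \Lie_E,
\end{equation*}
is a $K$-linear isomorphism.

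The next step is to observe that $\Lie(e_\Lat)$ is $A$-equivariant because $e_\Lat$ is a morphism of $A$-module schemes (this is built into the defining short exact sequence; the sequence lives in the category of $A$-module objects, and $\Lat$ is an $A$-submodule by construction). Consequently the $A$-module structure on $\Lie_E$ is transported from that on $\Lie_\Eg$ via a $K$-linear isomorphism. The characteristic homomorphism of a Drinfeld module is by definition read off from the scalar action of $A$ on its one-dimensional tangent space, so $\iota_E$ and $\iota_\Eg$ must coincide as homomorphisms $A \to K$.

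I expect the only delicate point to be citing the right version of Tate uniformization to conclude that $\Lie(e_\Lat)$ is an isomorphism — concretely, that $e_\Lat$ is étale at the origin because $\Lat$ is discrete in $\Eg(K^s)$ for the $\infty$-adic topology. This is standard (cf.~Papikian \cite[\S6.2]{papikian}), and once it is in hand the $A$-equivariance of the tangent map makes the equality of characteristic homomorphisms immediate.
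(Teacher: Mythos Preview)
Your argument is correct and is exactly the paper's approach: the paper's proof is the single line ``Follows since the exponential $e_\Lat$ induces an isomorphism of tangent spaces $\Lie_\Eg \isoarrow \Lie_E$,'' and your proposal simply unpacks why this isomorphism exists and why $A$-equivariance then forces $\iota_E = \iota_\Eg$.
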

\begin{proof}
Follows since the exponential $e_\Lat$ induces an isomorphism
of tangent spaces $\operatorname{Lie}_\Eg \isoarrow \operatorname{Lie}_E$.
\end{proof}

\newcommand{\liftx}{\tilde{x}}%
\newcommand{\delx}[1]{\delta_{#1}}%
\newcommand{\deln}{\delx{\fn}}%
\newcommand{\dela}{\delx{\fa}}%
\newcommand{\delad}{\delx{\ad}}%
\begin{lem}\label{modunip}%
For each proper ideal $\fn\subset A$ that is not divisible by the characteristic of~$E$
we have a Galois-equivariant short exact sequence of $A/\fn$-modules:
\begin{equation*}
\begin{tikzcd}[column sep=3.5em]
0 \rar & \ntor{\Eg} \rar["e_\Lat\circ-"] & \ntor{E} \rar["\deln"] & \Lat/\fn\Lat \rar & 0.
\end{tikzcd}
\end{equation*}
The homomorphism $\deln$ has the following description.
\begin{statements}
\item
For each $x \in \ntor{E}$ there is an $A$-module morphism
$\liftx\colon \fn^{-1} \to \Eg(K^s)$ which lifts~$x$
in the sense that $e_\Lat \circ \liftx$ equals the composite of $x$
and the quotient morphism $\fn^{-1} \twoheadrightarrow \fn^{-1}/A$.

\item
The residue class $\deln(x)$ is represented by the element
$\liftx(1) \in \Lat$ where $\liftx$ is a lift of~$x$.

\item\label{moduniprep}
Given a representative $\lv$ of the residue class $\deln(x)$ one can pick a lift
$\liftx$ such that $\liftx(1) = \lv$.
\end{statements}
\end{lem}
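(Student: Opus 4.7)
The plan is to deduce the sequence from the short exact sequence $0 \to \Lat \to \Eg(K^s) \to E(K^s) \to 0$ furnished by Tate uniformization, by applying the functor $\Hom_A(\fn^{-1}/A,-)$ and unwrapping the resulting long exact sequence.

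First I would compute the relevant $\operatorname{Ext}$-groups using the length-one projective resolution $0 \to A \to \fn^{-1} \to \fn^{-1}/A \to 0$, which is legitimate because the invertible ideal $\fn^{-1}$ is projective over~$A$. For any $A$-module~$M$ this gives $\operatorname{Ext}^1_A(\fn^{-1}/A, M) = \operatorname{coker}(\Hom_A(\fn^{-1}, M) \to M)$, where the map is restriction to $A \subset \fn^{-1}$; under the canonical identification $\Hom_A(\fn^{-1}, M) \cong \fn \otimes_A M$ this becomes multiplication $\fn \otimes_A M \to M$ with image $\fn M$, so $\operatorname{Ext}^1_A(\fn^{-1}/A, M) = M/\fn M$. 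Applied to the Tate sequence, I would invoke $\Hom_A(\fn^{-1}/A, \Lat) = 0$ (source is $\fn$-torsion, target torsion-free) and $\operatorname{Ext}^1_A(\fn^{-1}/A, \Eg(K^s)) = \Eg(K^s)/\fn \Eg(K^s) = 0$ (the multiplication-by-$a$ isogeny is separable and surjective on $K^s$-points for any $a \in \fn \setminus \fpres$, which exists by the hypothesis on $\fn$, so $\Eg(K^s)$ is $\fn$-divisible). The long exact sequence then collapses to the claimed short exact sequence.

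The explicit description of $\deln$ in parts (i)--(ii) comes from unwinding the connecting homomorphism via a diagram chase: given $x \colon \fn^{-1}/A \to E(K^s)$, the projectivity of $\fn^{-1}$ combined with surjectivity of $\Eg(K^s) \to E(K^s)$ produces a lift $\tilde{x} \colon \fn^{-1} \to \Eg(K^s)$ of the composite $\fn^{-1} \twoheadrightarrow \fn^{-1}/A \xrightarrow{x} E(K^s)$. Since $1 \in A \subset \fn^{-1}$ maps to zero in $\fn^{-1}/A$, the element $\tilde{x}(1)$ lies in $\ker(e_\Lat) = \Lat$, and its class modulo $\fn\Lat$ is $\deln(x)$. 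Part (iii) follows because two lifts differ by an element $\delta \in \Hom_A(\fn^{-1}, \Lat)$, and the evaluation $\delta \mapsto \delta(1)$ has image exactly $\fn\Lat$, so one can adjust $\tilde{x}$ to realize any prescribed representative $\lv$ of $\deln(x)$. Galois equivariance and $A/\fn$-linearity of all arrows are automatic from the functoriality of the construction.

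The main obstacle is the $\operatorname{Ext}^1$ computation, or equivalently the identity $\{\delta(1) : \delta \in \Hom_A(\fn^{-1}, \Lat)\} = \fn\Lat$, whose nontrivial direction $\subseteq$ rests on writing $1 = \sum n_i b_i$ with $n_i \in \fn$ and $b_i \in \fn^{-1}$ (possible because $\fn \fn^{-1} = A$), giving $\delta(1) = \sum n_i \delta(b_i) \in \fn\Lat$. This identification is what makes the target of the connecting map come out to be $\Lat/\fn\Lat$ on the nose rather than a coarser quotient, and it is the common ingredient behind the well-definedness of $\deln$ and part (iii).
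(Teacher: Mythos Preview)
Your argument is correct and is essentially the same as the paper's. The paper sets up the two-row diagram with the Tate sequence on the bottom and its $\fn$-tensored version on top (using $\fn\otimes_A M \cong \Hom_A(\fn^{-1},M)$ so that the vertical multiplication maps become evaluation at~$1$), then applies the Snake Lemma; this is exactly the explicit form of your $\operatorname{Ext}$ long exact sequence for the projective resolution $0\to A\to\fn^{-1}\to\fn^{-1}/A\to 0$, and your identification $\operatorname{Ext}^1_A(\fn^{-1}/A,M)=M/\fn M$ is the cokernel of the left vertical arrow in the paper's diagram. One minor wording slip: you invoke $a\in\fn\setminus\fpres$, but the hypothesis is that $\fn$ is prime to the \emph{characteristic} of~$E$, which may be~$(0)$; in that case such an~$a$ need not exist, but any nonzero $a\in\fn$ already gives a separable isogeny, so the divisibility of $\Eg(K^s)$ still holds (this is the content of the paper's Lemma~\ref{torses}).
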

\begin{proof}
Consider a commutative diagram
\begin{equation*}
\begin{tikzcd}[column sep=3.5em]
0 \rar & \fn\Lat \dar \rar & \fn \otimes_A \Eg(K^s) \dar \rar["1\otimes e_\Lat"] & \fn\otimes_A E(K^s) \dar \rar & 0 \\
0 \rar & \Lat \rar & \Eg(K^s) \rar["e_\Lat"] & E(K^s) \rar & 0
\end{tikzcd}
\end{equation*}
with the vertical arrows given by $A$-module multiplication.
The bottom row of this diagram is exact by construction and the top
row is exact since the $A$-module $\fn$ is flat.

Consider the natural isomorphism 
$\fn\otimes_A M \isoarrow \Hom_A(\fn^{-1},\:M)$.
Under this isomorphism the multiplication map
$\fn \otimes_A M \to M$ is transformed to the evaluation
map $f \mapsto f(1)$.
Hence the middle vertical arrow in the diagram above is surjective by Lemma~\ref{torses},
and our claims follow from Snake Lemma.
\end{proof}

Lemma~\ref{quotchar} implies that Drinfeld modules $\Eg$ and $E$ have the same residual characteristic.
As usual we denote this characteristic by~$\fpres$.
Consider the restricted adelic Tate module $\Tad E$
and let $\GL(\Tad E)$ be the group of its automorphisms as an $\Aad$-module.
The action of the inertia subgroup $I_K$ 
gives rise to the (local) \emph{restricted adelic monodromy representation}
\begin{equation*}
\rhad\colon I_K \to \GL(\Tad E).
\end{equation*}
Similarly for each prime $\fp\ne\fpres$ we have the \emph{$\fp$-adic monodromy representation}
$\rhp\colon I_K \to \GL(T_\fp E)$.
The representation $\rhp$ is the composite of $\rhad$
and the projection $\GL(\Tad E) \to \GL(T_\fp E)$.

Taking the limit of sequences of Lemma~\ref{modunip}
we obtain a short exact sequence of Galois representations:
\begin{equation*}
\begin{tikzcd}[column sep=4em]
0 \rar & \Tad\Eg \rar["\Tad(e_\Lat)"] & \Tad E \rar["\delad"] & \Aad \otimes_A \Lat \rar & 0.
\end{tikzcd}
\end{equation*}
As the Drinfeld module~$\Eg$ has good reduction
we have at our disposal the local adelic Kummer pairing
\begin{equation*}
\kumadv\colon \Eg(K) \times I_K \to \Tad\Eg.
\end{equation*}

\begin{lem}\label{unimon}
The monodromy representation $\rhad$ factors as a composition
\begin{equation*}
\begin{tikzcd}
I_K \arrow[rrr,"g\:\mapsto\:\kumad{\ }{g}"] &&& \Hom_A(\Lat,\:\Tad\Eg) \rar[hook] & \GL(\Tad E)
\end{tikzcd}
\end{equation*}
where the second arrow is given by the formula
$f \mapsto 1 + \Tad(e_\Lat) \circ f \circ \delad$.
\end{lem}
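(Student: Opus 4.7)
The plan is to exploit the short exact sequence of $G_K$-modules
\begin{equation*}
0 \to \Tad\Eg \xrightarrow{\Tad(e_\Lat)} \Tad E \xrightarrow{\delad} \Aad\otimes_A\Lat \to 0
\end{equation*}
obtained as the inverse limit of the sequences of Lemma~\ref{modunip}. Since $\Eg$ has good reduction the inertia $I_K$ acts trivially on $\Tad\Eg$, and since $\Lat\subset\Eg(K)$ by hypothesis it also acts trivially on $\Aad\otimes_A\Lat$. Consequently, for each $g\in I_K$ and each $x\in\Tad E$ the element $gx-x$ lies in $\ker(\delad)$ and depends only on $\delad(x)$, because two preimages of the same element of $\Aad\otimes_A\Lat$ differ by a vector of $\Tad\Eg$ on which $g$ acts trivially. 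This yields a well-defined $\Aad$-linear map $\delta_g\colon\Aad\otimes_A\Lat\to\Tad\Eg$, and the factorization $\rhad(g)=1+\Tad(e_\Lat)\circ\delta_g\circ\delad$ holds tautologically; invertibility is automatic because $\delad\circ\Tad(e_\Lat)=0$ makes $\Tad(e_\Lat)\circ\delta_g\circ\delad$ square-zero.

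The core step is to identify $\delta_g|_\Lat$ with the Kummer pairing $\kumad{\ }{g}$; I would verify this at each finite level. Fix a proper ideal $\fn\subset A$ prime to $\fpres$ and an element $\lv\in\Lat$. By surjectivity of $\deln$ in Lemma~\ref{modunip}, pick $x\in\ntor{E}$ with $\deln(x)=\lv\bmod\fn\Lat$, and invoke Lemma~\ref{modunip}~\ref{moduniprep} to choose a lift $\liftx\colon\fn^{-1}\to\Eg(K^s)$ satisfying $\liftx(1)=\lv$ and $e_\Lat\circ\liftx = x\circ\pi$, where $\pi\colon\fn^{-1}\twoheadrightarrow\fn^{-1}/A$ is the canonical projection. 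Applying $g$ to this identity and using that $e_\Lat$ is defined over~$K$, one obtains $(gx-x)\circ\pi = e_\Lat\circ(g\liftx-\liftx)$. Since $\lv\in\Eg(K)$ the difference $g\liftx-\liftx$ vanishes on $1\in\fn^{-1}$ and hence on the submodule $A$, so it factors as $\eta\circ\pi$ for a unique $\eta\in\ntor{\Eg}$, and one reads off $gx-x = e_\Lat\circ\eta$. Under the identification $\ker(\deln)\cong\ntor{\Eg}$ this means $\delta_g(\lv\bmod\fn\Lat)=\eta$.

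The remaining step is to observe that Lemma~\ref{dmasdesc}, applied to the very same lift $\liftx$ of $\lv\in\Eg(K)$, asserts precisely that $\eta$ represents the Kummer class $\kum{\lv}{g}{\fn}\in\ntor{\Eg}$. Combining the two computations yields $\delta_g(\lv\bmod\fn\Lat)=\kum{\lv}{g}{\fn}$ at every finite level, and passage to the inverse limit over ideals $\fn$ prime to $\fpres$ gives $\delta_g|_\Lat = \kumad{\ }{g}$ as required. The principal obstacle is the cocycle bookkeeping in the second paragraph: the whole argument hinges on the freedom, supplied by Lemma~\ref{modunip}~\ref{moduniprep}, to choose the preimage $x\in\ntor{E}$ and the Kummer lift $\liftx$ in a compatible way, so that the two a~priori distinct cocycle descriptions of $gx-x$ and $\kum{\lv}{g}{\fn}$ literally coincide rather than merely agreeing up to a coboundary.
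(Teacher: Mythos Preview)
Your proof is correct and follows essentially the same approach as the paper's: both arguments work at a finite level $\fn$, invoke Lemma~\ref{modunip}~\ref{moduniprep} to choose a lift $\liftx$ with $\liftx(1)=\lv$, and then appeal to Lemma~\ref{dmasdesc} to identify $g\liftx-\liftx$ with $\kum{\lv}{g}{\fn}$. Your version is more explicit about the abstract construction of $\delta_g$ from the short exact sequence and about why $g\liftx-\liftx$ descends to $\fn^{-1}/A$, whereas the paper compresses this into the single line $e_\Lat\circ\kum{\lv}{g}{\fn}=g\circ x - x$, but the substance is the same.
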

\begin{proof}
Let $\fn \subset A$ be a proper ideal not divisible by~$\fpres$
and let $x \in \ntor{E}$ be an element.
Pick a representative $\lv$ of the residue class $\deln(x)$.
We need to show that for each automorphism $g \in I_K$ one has
\begin{equation*}
g \circ x = x + e_\Lat \circ \kum{\lv}{g}{\fn}.
\end{equation*}
Let $\liftx\colon \fn^{-1} \to \Eg(K^s)$ be a lift of~$x$.
By Lemma~\ref{modunip}~\ref{moduniprep}
we are free to assume that $\liftx(1) = \lv$. 
Lemma~\ref{dmasdesc} shows that
\begin{equation*}
\kum{\lv}{g}{\fn} = g \circ \liftx - \liftx.
\end{equation*}
Hence
$e_\Lat\circ \kum{\lv}{g}{\fn} = g \circ x - x$,
and we are done.
\end{proof}

\begin{cor}\label{abmon}
The monodromy representation $\rhad$ factors through the maximal
quotient $J_K = I_K^{\ab} / (I_K^{\ab})^{\times p}$ that is abelian of exponent~$p$.\qed
\end{cor}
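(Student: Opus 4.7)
The claim is an immediate consequence of Lemma~\ref{unimon} together with the factorization property of the Kummer pairing that was already recorded after Definition~\ref{dfnlocpair}. My plan is to chain these two facts.

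First, I would invoke Lemma~\ref{unimon} to write $\rhad$ as the composite of $I_K \to \Hom_A(\Lat,\,\Tad\Eg)$, $g \mapsto \kumad{\ }{g}$, followed by the injection $f \mapsto 1 + \Tad(e_\Lat)\circ f\circ \delad$ into $\GL(\Tad E)$. Since the second map is injective, it suffices to verify that the first map factors through $J_K$.

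Next, I would observe that the first map factors through $J_K$ pointwise: for each $\lv\in\Lat$ the homomorphism $\kumad{\lv}{\ }\colon I_K \to \Tad\Eg$ is a group homomorphism (because $I_K$ acts trivially on $\Tad\Eg$, the torsion of $\Eg$ being unramified by good reduction), and by the discussion following Definition~\ref{dfnlocpair} it factors through $J_K$ since the Tate module $\Tad\Eg$ is $p$-torsion (the coefficient ring $A$ is an $\Fq$-algebra of characteristic~$p$). Varying~$\lv$ over $\Lat$ and using that $\Hom_A(\Lat,\,\Tad\Eg)$ is determined by the values on~$\Lat$, the map $g \mapsto \kumad{\ }{g}$ as a whole factors through $J_K$.

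There is no real obstacle here; the only point worth stating carefully is the passage from the pointwise factorization to the factorization of the combined map, which is trivial because $\Hom_A(\Lat,\,\Tad\Eg)$ is an abelian group of exponent~$p$ and any group homomorphism from $I_K$ into such a group kills $[I_K,I_K]\cdot I_K^{\times p}$, hence descends to $J_K$.
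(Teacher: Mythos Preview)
Your proposal is correct and matches the paper's intended argument: the corollary is marked with \qed\ precisely because it follows at once from Lemma~\ref{unimon} together with the observation (already made after Definition~\ref{dfnlocpair}) that the Kummer pairing lands in a $p$-torsion abelian group and hence factors through~$J_K$. Your final paragraph gives the cleanest formulation---$\Hom_A(\Lat,\Tad\Eg)$ is abelian of exponent~$p$, so any group homomorphism from $I_K$ into it descends to $J_K$---and this is exactly what the paper leaves implicit.
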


\begin{rmk}
Similar reasnoning applies to abelian varieties over a nonarchimedean local field $K$.
Raynaud proved that
\begin{itemize}
\item
every abelian variety over $K$ has potentially semistable reduction
\cite[\S3]{grothendieck},

\item
an abelian variety $E$ of semistable reduction is a rigid analytic quotient of a semiabelian variety $D$ of good reduction by a lattice $\Lat$ \cite[\S14]{grothendieck}.
\end{itemize}
The Galois action on $\Lat$ is always unramified \cite[\S3, Cor. 3.8.]{grothendieck}.
Consequently, for each prime $\ell$ different from the residual characteristic the $\ell$-adic monodromy representation
$\rho_\ell\colon I_K \to \GL(T_\ell E)$ is expressed in terms of the Kummer pairing of $D$ in the same way as in our Lemma \ref{unimon}. One deduces that $\rho_\ell$ is unipotent of level at most 2, and that $\rho_\ell(P_K) = \{1\}$.
\end{rmk}

Let us return to the setting of Drinfeld modules. 
Following \S\ref{mainest} we consider Poonen's canonical local height 
$\infn{\cdot}$
on the $A$-module $\Eg(K)$.
We have seen that with a suitable normalization of the absolute value 
this height is an $\infty$-adic seminorm.
Remarkably this restricts to a proper norm on every period lattice~$\Lat$:

\begin{lem}\label{latnorm}
The induced height $\infn{\cdot}\colon \Lat \to \Z_{\geqslant 0}$ has the following properties:
\begin{statements}
\item
$\infn{\lv} = 0$ if and only if $\lv = 0$,

\item\label{ultraineq}
$\infn{\lv + \lv'} \leqslant \max \{\infn{\lv}, \:\infn{\lv'} \}$,


\item\label{latscale}
$\infn{a\lv} = |a|_\infty^s \, \infn{\lv}$ for all $a \in A$,

\item\label{latfin}
the subset $\Latb{b}$
is finite for every $b > 0$.
\end{statements}
The exponent $s$ in \ref{latscale} is the rank of the Drinfeld module~$\Eg$.
\end{lem}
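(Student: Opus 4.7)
The plan is to deduce all four properties from what is already established about the seminorm $\infn{\cdot}$ on $\Eg(K)$, together with the hypotheses that $\Lat$ is a discrete $A$-submodule of $\Eg(K)$ and that $\Lat$ is projective (hence torsion-free) over $A$. I will treat the properties in the order (2), (3), (4), (1), ending with (1) because it relies on (4).

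Properties (2) and (3) require no further work: the ultrametric inequality for the height on $\Eg(K)$ is immediate from Definition~\ref{dfnseminorm} since the valuation $v$ satisfies $v(\lv+\lv')\geqslant\min\{v(\lv),\,v(\lv')\}$, and the scaling identity $\infn{a\lv}=|a|_\infty^{s}\,\infn{\lv}$ is exactly Lemma~\ref{valrel}. Restricting to the $A$-submodule $\Lat\subset\Eg(K)$ yields (2) and (3) verbatim.

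For (4) I would identify the $\Fq$-module scheme $\Eg$ with $\Ga$ over $\Spec\OK$ as in \S\ref{mainest}, which endows $\Eg(K)$ with the topology of the local field~$K$ (and does not depend on the choice of isomorphism, since two such choices differ by multiplication by an element of $\OK^\times$). Under this identification the set $\{\lv\in\Eg(K):\infn{\lv}\leqslant b\}$ corresponds to the fractional ideal $\{x\in K:v(x)\geqslant -b\}$, which is a compact $\OK$-submodule of $K$. The period lattice $\Lat$ is by definition discrete in $\Eg(K^s)$ for the adic topology, and since $\Lat\subset\Eg(K)$ it is discrete in the topological subspace $\Eg(K)$ as well. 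The intersection of a discrete subset with a compact one is finite, so $\Latb{b}$ is finite. Since $v$ takes integer values on $K^\times$, this also confirms that $\infn{\cdot}\colon\Lat\to\Z_{\geqslant0}$ is well defined.

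For (1) set $N:=\{\lv\in\Lat:\infn{\lv}=0\}=\Lat\cap\Eg(\OK)$; this agrees with $\Latb{b}$ for any $b\in(0,1)$ because the height is integer-valued, so $N$ is finite by (4). On the other hand $N$ is an $A$-submodule of $\Lat$: indeed, $\Eg$ being a Drinfeld module over $\Spec\OK$ means that multiplication by every $a\in A$ preserves $\Eg(\OK)$. Since $\Lat$ is projective over the domain $A$, it is torsion-free, and because $A$ is infinite any nonzero $\lv\in N$ would generate an infinite submodule $A\lv\subset N$, contradicting finiteness. Hence $N=\{0\}$, which is (1). The only point that needs any care is the compactness/discreteness step in (4); once the topological identification $\Eg(K)\cong K$ is in place everything else is a routine restriction.
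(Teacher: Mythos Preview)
Your proof is correct and follows essentially the same route as the paper: properties~(2) and~(3) are inherited from the ambient seminorm on $\Eg(K)$, property~(4) comes from discreteness of $\Lat$ against compactness of balls in the local field, and property~(1) follows because $\Lat\cap\Eg(\OK)$ is a finite $A$-submodule of a torsion-free module. The paper phrases the last step as ``finite hence torsion, hence zero,'' which is exactly your infinite-orbit argument in different words; your write-up simply spells out the compactness step that the paper leaves implicit.
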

\begin{proof}
Properties \ref{ultraineq} and~\ref{latscale} hold for the ambient seminorm
and Property~\ref{latfin} follows since the lattice $\Lat$ is discrete in $\Eg(K)$.
We only need to check the first property. Note that $\infn{\lv} = 0$
if and only if $\lv\in\Eg(\OK)$.
The $A$-module $\Lat' = \Lat \cap \Eg(\OK)$ is finite since $\Lat$ is discrete in~$\Eg(K)$.
Hence $\Lat'$ is torsion, and thus zero as the enclosing module $\Lat$ is torsion-free.
\end{proof}

\medskip
This fits to our definition of a normed lattice (Definition~\ref{lattice})
with a caveat: The homogeneity property~\ref{latscale} holds for a differently
normalized $\infty$-adic absolute value.
When necessary we will correct for this discrepancy by considering the norm $\infn{\cdot}^{\frac{1}{s}}$. 


\begin{thm}\label{tatevanish}
Let $E$ be a Drinfeld module of stable reduction over $\Spec K$
with period lattice $\Lat$ defined over~$K$.
Then there is an integer $\coi\geqslant 0$ such that
the monodromy representation $\rhad\colon J_K \to \GL(\Tad E)$ 
maps the ramification subgroup $\smash{J^\ram{\coi}_K}$ to~$1$.
Explicitly, if the period lattice
$\Lat$ is generated by elements $\lv_1, \dotsc, \lv_\lra$
then one can take
\begin{equation*}
\coi = \max \infn{\lv_i} + 1.
\end{equation*}
\end{thm}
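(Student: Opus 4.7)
The plan is to combine Lemma~\ref{unimon} with Theorem~\ref{kumimage}~\ref{rambound}. By Lemma~\ref{unimon} together with Corollary~\ref{abmon}, the restricted adelic monodromy factors as
\begin{equation*}
\rhad\colon J_K \xrightarrow{g\:\mapsto\:\kumad{\ }{g}} \Hom_A(\Lat,\,\Tad\Eg) \hookrightarrow \GL(\Tad E),
\end{equation*}
so an element $g\in J_K$ lies in $\ker\rhad$ precisely when $\kumad{\lv}{g} = 0$ for every $\lv\in\Lat$. Since the Kummer pairing $\kumadv$ is $A$-linear in its first argument and the $A$-module $\Lat$ is generated by $\lv_1,\dotsc,\lv_\lra$, this vanishing needs to be checked only on the generators.

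For each generator $\lv_i\in\Eg(K)\subset\Eg(K^\unr)$, Theorem~\ref{kumimage}~\ref{rambound} asserts that the homomorphism $\kumadu{\lv_i}\colon J_K \to \Tad\Eg$ is trivial on the ramification subgroup $J_K^{\ram{\infn{\lv_i}+1}}$. Taking $\coi = \max_i \infn{\lv_i} + 1$, the fact that the upper-index ramification filtration is decreasing yields an inclusion $J_K^{\ram{\coi}} \subseteq J_K^{\ram{\infn{\lv_i}+1}}$ for every $i$. Therefore $\kumad{\lv_i}{g} = 0$ for every $g\in J_K^{\ram{\coi}}$ and every $i$, and by the previous paragraph $\rhad(J_K^{\ram{\coi}}) = 1$. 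The integer $\coi$ is well-defined because each $\infn{\lv_i}$ is a non-negative integer by Definition~\ref{dfnseminorm}.

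I do not anticipate any essential obstacle: the argument is a direct repackaging of the Kummer pairing vanishing estimate in Theorem~\ref{kumimage}~\ref{rambound} through the unipotent description of the local monodromy supplied by Lemma~\ref{unimon}. The only additional structural input beyond those two results is the finite generation of $\Lat$ as an $A$-module, which reduces the verification to the finite list of generators and ensures that the maximum in the expression for $\coi$ is attained.
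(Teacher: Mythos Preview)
Your proposal is correct and follows essentially the same route as the paper's proof: apply Theorem~\ref{kumimage}~\ref{rambound} to each generator $\lv_i$, use $A$-linearity of the Kummer pairing in the first argument to extend the vanishing to all of $\Lat$, and then invoke Lemma~\ref{unimon} to conclude that $\rhad$ is trivial on $J_K^{\ram{\coi}}$. Your write-up is slightly more explicit about the decreasing filtration and the well-definedness of $\coi$, but there is no substantive difference.
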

\begin{proof}
By Theorem~\ref{kumimage}~\ref{rambound} each \KH homomorphism $\kumadu{\lv_i}$
vanishes on the subgroup $\smash{J^{\ram{\coi}}_K}$.
Since the Kummer pairing is $A$-linear in the first argument
it follows that the homomorphism $\kumadl$ vanishes on $\smash{J^{\ram{\coi}}_K}$ for
all $\lv \in \Lat$. Lemma~\ref{unimon} implies the result.
\end{proof}

\medskip
In view of Theorem \ref{tatevanish} the following definition makes sense:

\begin{dfn}\label{cond}
The \emph{conductor} $\cond{E}$ is the least integer $m\ge0$ such that
$\rho(J_K^{\coi+1}) = 1$.
\end{dfn}

\begin{lem}\label{condzero}
We have $\cond{E} = 0$ if and only if the Drinfeld module $E$ has good reduction.
\end{lem}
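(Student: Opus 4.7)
The plan is to extract both directions from the Kummer-pairing description of the monodromy $\rhad$ in Lemma~\ref{unimon} and the openness result in Theorem~\ref{kummeropen}. The ``if'' direction is essentially automatic: good reduction of $E$ forces the period lattice in the Tate uniformization to vanish, so the exponential $e_\Lat\colon\Eg\to E$ is an isomorphism and $\Tad E=\Tad\Eg$ carries an unramified Galois action; therefore $\rhad$ vanishes on $I_K$, a fortiori on $J_K^1$, and $\cond{E}=0$ by Definition~\ref{cond}.

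For the ``only if'' direction, I would first observe that $J_K^1=J_K$. Indeed, the paper's own remark in \S\ref{ramsubgroups} asserts that the canonical morphism $P_K\twoheadrightarrow J_K$ is surjective, so the image of $P_K=\overline{\bigcup_{u>0}I_K^{\ram{u}}}$ in $J_K$ is all of $J_K$. On the other hand this image is the closure of $\bigcup_{u>0}J_K^{\ram{u}}$, which by the Hasse--Arf normalisation $J_K^u=J_K^{\lceil u\rceil}$ recorded in \S\ref{ramsubgroups} collapses to $J_K^1$. Hence the hypothesis $\cond{E}=0$, i.e.\ $\rhad(J_K^1)=1$, upgrades to the triviality of $\rhad$ on all of $J_K$. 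Lemma~\ref{unimon} then translates this into the vanishing of every Kummer homomorphism $\kumadl\colon J_K\to\Tad\Eg$ for $\lv\in\Lat$.

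To finish I would argue by contradiction. Suppose $\Lat\ne 0$ and pick any nonzero $\lv\in\Lat$. Lemma~\ref{latnorm} gives $\infn{\lv}>0$, so $\lv\in\Eg(K)\setminus\Eg(\OK)$ by Definition~\ref{dfnseminorm}, and Theorem~\ref{kummeropen} then asserts that $\kumadl\colon I_K\to\Tad\Eg$ has open --- in particular nonzero --- image in flat contradiction to the vanishing just obtained. This forces $\Lat=0$, so $E=\Eg$ has good reduction, completing the equivalence.

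The only step that requires care is the identification $J_K^1=J_K$: without it the hypothesis $\cond{E}=0$ yields vanishing of $\rhad$ only on a possibly proper subgroup of $J_K$, and the contradiction with openness need not materialise. Everything else is a direct unpacking of the definitions together with the Kummer-pairing machinery already assembled in \S\ref{locmonpair}.
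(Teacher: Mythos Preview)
Your proof is correct, but the paper takes a much shorter route. The paper obtains $J_K^0=J_K^1$ by applying Lemma~\ref{ramcong} at $m=0$ (an argument via local class field theory), whereas you deduce it from the surjection $P_K\twoheadrightarrow J_K$ together with the Hasse--Arf integrality; both are valid. The substantive difference is in the equivalence itself: once $\cond{E}=0$ is upgraded to triviality of the inertia action on $\Tad E$, the paper simply invokes Takahashi's good reduction criterion and is done, while you instead unwind Lemma~\ref{unimon} and derive a contradiction from the open-image result Theorem~\ref{kummeropen}. Your approach is more self-contained --- it effectively reproves the relevant direction of Takahashi's criterion using the Kummer machinery already developed in the paper --- at the cost of being longer. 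One small point you pass over silently: in concluding that $\overline{\bigcup_{u>0}J_K^u}=J_K^1$ you need $J_K^1$ itself to be closed, which is true since it is the image of the compact subgroup $I_K^1$ under the continuous quotient map to the Hausdorff group $J_K$.
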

\begin{proof}
We have $J_K^0 = J_K^1$ by Lemma \ref{ramcong} so $\cond{E} = 0$ if and only if
the inertia group $I_K$ acts trivially on $T_\fp E$ for all $\fp\ne\fpres$.
The claim thus follows from Takahashi's good reduction criterion \cite{takahashi}.
\end{proof}
The invariant $\cond{E}$ relates to conductors of torsion point extensions in the following way.
\newcommand{\Kn}{K_\fn}%
\newcommand{\Knur}{{K_\fn}^{\kern-.9ex\circ}}%
For each proper ideal $\fn$ that is prime to~$\fpres$ 
consider the extension $\Kn\subset K^s$ generated by the $\fn$-torsion points of~$E$,
which is to say, the group $\Gal(K^s/\Kn)$ is the kernel 
of the torsion points representation $G_K \to \GL(E[\fn])$.
Let $\Knur\subset\Kn$ be the maximal unramified subextension.

\begin{lem}\label{abcond}
The extensions $\Kn/\Knur$ are abelian and their conductors are related to $\cond{E}$ as follows.
If $\cond{E} \ne 0$ then we have an equality
\begin{equation*}
\cond{E} + 1 = \max_\fn\,\econd{\Kn/\Knur}.
\end{equation*}
If $\cond{E} = 0$ then $\econd{\Kn/\Knur} = 0$ for all $\fn$.
\end{lem}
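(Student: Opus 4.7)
The plan is to translate the question about field-theoretic conductors back into the language of the monodromy filtration on $J_K$, where Definition~\ref{cond} and Corollary~\ref{abmon} apply directly. First I would identify $\Gal(K_\fn/K_\fn^\circ)$ with the inertia subgroup of $\Gal(K_\fn/K)$, which is the image of $I_K$ under the torsion representation $\rho_\fn\colon G_K \to \GL(E[\fn])$. Since $\rho_\fn$ is a quotient of $\rhad$, Corollary~\ref{abmon} shows that $\rho_\fn$ factors through $J_K$; in particular $\Gal(K_\fn/K_\fn^\circ)$ is abelian (of exponent $p$), proving the first assertion.

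Next I would invoke the standard compatibility of the upper-numbering ramification filtration with quotients \cite[IV,~\S3,~Prop.~14]{corps}: for every proper $\fn$ prime to $\fpres$ and every integer $c\ge 0$ one has
\begin{equation*}
\Gal(K_\fn/K_\fn^\circ)^{\ram{c}} = \rho_\fn(J_K^{\ram{c}}).
\end{equation*}
For an abelian extension the conductor exponent is the least integer $c\ge 0$ at which the upper-numbering ramification becomes trivial, so $\econd{K_\fn/K_\fn^\circ}$ equals the least $c\ge 0$ with $\rho_\fn(J_K^{\ram{c}}) = 1$. On the other hand the product decomposition $\Tad E = \prod_{\fp\ne\fpres} T_\fp E$ together with the fact that the modules $E[\fn]$ exhaust the quotients of $\Tad E$ gives $\rhad(J_K^{\ram{c}}) = 1$ if and only if $\rho_\fn(J_K^{\ram{c}}) = 1$ for every such~$\fn$.

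Combining these two observations, the defining condition $\rhad(J_K^{\ram{\coi+1}}) = 1$ for $\coi = \cond{E}$ is equivalent to $\econd{K_\fn/K_\fn^\circ} \le \coi+1$ for every $\fn$, while the minimality of $\coi$ says that for some $\fn$ we have $\econd{K_\fn/K_\fn^\circ} = \coi + 1$. When $\coi \ge 1$ this gives $\cond{E}+1 = \max_\fn \econd{K_\fn/K_\fn^\circ}$. In the case $\cond{E} = 0$, Lemma~\ref{condzero} tells us that $E$ has good reduction, so $I_K$ acts trivially on every $T_\fp E$, whence $K_\fn = K_\fn^\circ$ and the conductor exponent vanishes for every~$\fn$.

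The one subtlety to handle with care is the transition from $I_K$ to $J_K$ in the ramification filtration identification: the filtration on $J_K$ is the image of the one on $I_K^\ab$ (hence of $I_K$), and this image in turn surjects onto the corresponding filtration on $\Gal(K_\fn/K_\fn^\circ)$. Because we only quotient and never take subgroups, the upper-numbering formalism of Herbrand applies directly and no $\psi$-function correction is needed. The only other point worth verifying is the boundary case $c = 0$ in the definition of the conductor exponent, which is why the statement splits according to whether $\cond{E}$ vanishes or not.
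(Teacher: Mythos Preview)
Your argument is correct and follows essentially the same route as the paper's proof: both identify $\Gal(K_\fn/K_\fn^\circ)$ with $\rho_\fn(I_K)$, use Corollary~\ref{abmon} for abelianness, and then match the conductor exponent with the highest upper-numbering break via compatibility of the ramification filtration with quotients. The paper is terser---it simply cites the relation $\econd{K_\fn/K_\fn^\circ} = b(K_\fn/K_\fn^\circ) + 1$ and declares the rest immediate---whereas you spell out the Herbrand compatibility and handle the $\cond{E}=0$ case via Lemma~\ref{condzero} (the paper's implicit route here is the same $J_K^0 = J_K^1$ argument underlying that lemma).
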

\begin{proof}
The Galois representation $E[\fn]$ and $\ntor{E}$ are non-canonically isomorphic, cf.~\S\ref{predm}.
So the Galois group of the extension $\Kn/\Knur$ is the image
of the absolute inertia subgroup $I_K$ under the torsion points representation
$G_K \to \GL(\ntor{E})$. The latter is obtained from the adelic Tate module representation 
by reduction modulo the ideal $\fn\Aad$.
Hence the extension $\Kn/\Knur$ is abelian by Corollary~\ref{abmon}.
When this extension is nontrivial its conductor and its highest break in upper numbering $b(\Kn/\Knur)$
are related in the following way:
\begin{equation*}
\econd{\Kn/\Knur} = b(\Kn/\Knur) + 1,
\end{equation*}
see \cite[\S4.2, Proposition~1]{serre-lcft}.
In view of this formula our claim is immediate.
\end{proof}

The normalization of the conductor $\cond{E}$ thus differs by $1$ from the one of local class field theory.
In our case this choice leads to better-looking formulas.

\begin{lem}\label{condval}
The conductor $\cond{E}$ is either zero or a positive integer that is prime to~$p$.
\end{lem}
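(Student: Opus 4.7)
The plan is to argue by contradiction using Lemma \ref{ramcong}. Suppose $\coi := \cond{E} > 0$ and that $p \mid \coi$. By Definition \ref{cond} we have $\rho(J_K^{\coi+1}) = 1$, and by minimality $\rho(J_K^{\coi}) \ne 1$. However, since $\coi \equiv 0 \pmod{p}$, Lemma \ref{ramcong} (applied with its integer variable set to $\coi$) gives the equality
\begin{equation*}
J_K^{\coi} = J_K^{\coi+1}.
\end{equation*}
Consequently $\rho(J_K^{\coi}) = \rho(J_K^{\coi+1}) = 1$, which means that the integer $\coi - 1 \geqslant 0$ already satisfies the defining condition of $\cond{E}$. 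This contradicts the minimality of $\coi$.

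Hence either $\coi = 0$, or $\coi > 0$ and $\coi$ is not divisible by $p$, as claimed. The whole argument is purely formal once Lemma \ref{ramcong} is available; no additional input from the theory of Drinfeld modules is required. The only subtlety is to make sure that the inequality $\coi - 1 \geqslant 0$ is legitimate, which holds precisely because we have assumed $\coi > 0$ (and the divisibility $p \mid \coi$ then forces $\coi \geqslant p \geqslant 2$ in any case).
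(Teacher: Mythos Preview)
Your proof is correct and follows exactly the same approach as the paper: both arguments rest on the single observation from Lemma~\ref{ramcong} that $J_K^{\coi} = J_K^{\coi+1}$ whenever $p\mid\coi$, which immediately rules out a positive conductor divisible by~$p$. The paper's proof is just a one-line version of what you wrote out in detail.
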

\begin{proof}
If $\coi \geqslant 0$ is an integer that is divisible by~$p$ 
then we have $J_K^{\coi} = J_K^{\coi+1}$ by Lemma~\ref{ramcong}.
\end{proof}

\begin{lem}\label{isogdepth}
The conductor is an isogeny invariant.
\end{lem}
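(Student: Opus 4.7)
The plan is to show that an isogeny $\varphi\colon E_1 \to E_2$ over $K$ induces, for each prime $\fp\ne\fpres$, a $G_K$-equivariant identification $F_\fp\otimes_{A_\fp} T_\fp E_1 \isoarrow F_\fp\otimes_{A_\fp} T_\fp E_2$, where $F_\fp$ is the fraction field of $A_\fp$. Once this is done, the $\fp$-adic monodromy representations $\rho_\fp^{E_1}$ and $\rho_\fp^{E_2}$ become restrictions of a common $F_\fp$-linear action of $I_K$ on the ambient vector space, and hence share the same kernel in $I_K$. Intersecting these kernels over all $\fp\ne\fpres$ and passing to the quotient $J_K$ one obtains the equivalence
\begin{equation*}
\rho_\fp^{E_1}(J_K^{\coi+1}) = 1 \text{ for every } \fp\ne\fpres \iff \rho_\fp^{E_2}(J_K^{\coi+1}) = 1 \text{ for every } \fp\ne\fpres,
\end{equation*}
which by Definition~\ref{cond} gives $\cond{E_1} = \cond{E_2}$.

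For the identification I use the existence of a quasi-inverse isogeny: the finite $A$-module scheme $N = \ker\varphi$ is annihilated by some nonzero $a\in A$, so $[a]_{E_1}$ factors as $\psi\circ\varphi$ for a unique $\psi\colon E_2 \to E_1$, and a short computation using the surjectivity of $\varphi$ gives also $\varphi\circ\psi = [a]_{E_2}$. Applying the functor $T_\fp$ yields the relation $T_\fp\psi\circ T_\fp\varphi = a\cdot\mathrm{id}$. Since $A_\fp$ is a discrete valuation ring and $T_\fp E_1$ is $A_\fp$-torsion-free, $T_\fp\varphi$ is injective. Since $a$ becomes a unit in $F_\fp$, the base change $F_\fp\otimes_{A_\fp} T_\fp\varphi$ is an injection of $F_\fp$-vector spaces of the common rank~$r$ (isogenous Drinfeld modules share their rank), hence an isomorphism; its Galois-equivariance is automatic since $\varphi$ is defined over~$K$. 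No substantive obstacle is anticipated, as the rest of the argument is a purely formal manipulation of kernels.
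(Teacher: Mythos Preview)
Your argument matches the paper's in the separable case, but the claim that the finite $A$-module scheme $N=\ker\varphi$ is annihilated by some nonzero $a\in A$ can fail when $\varphi$ is inseparable. In this paper the characteristic homomorphism is not fixed in advance (the paper's own proof of this lemma opens with exactly that warning), so an isogeny may have the form $g\,\tau^n$ with $n>0$; then $N$ contains the connected group scheme $\ker\tau^n$. If $\iota_{E_1}$ is injective --- which is permitted, since ``finite residual characteristic'' only requires $\iota_{E_1}(A)\subset\OK$ --- then every $[a]_{E_1}$ with $a\ne0$ is separable and has \'etale kernel, so it cannot contain $\ker\tau^n$. Hence the factorisation $[a]_{E_1}=\psi\circ\varphi$ does not exist, and your quasi-inverse $\psi$ is unavailable.

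The repair is precisely what the paper does: write the isogeny as $g\,\tau^n$ with $g$ of nonzero constant coefficient and treat the two factors separately. The factor $\tau^n$ carries $E_1$ to its Frobenius twist, and the $q^n$-power map on $K^s$ furnishes a Galois-equivariant bijection on all prime-to-$\fpres$ torsion, hence an isomorphism of adelic Tate modules; the conductors therefore agree for this factor. For the separable factor $g$ the two Drinfeld modules share the same characteristic homomorphism, the dual isogeny exists exactly as you describe, and your rational-Tate-module argument (equivalently the paper's version: $\Tad(g)$ is an injective $G_K$-map, giving one inequality of conductors, and the dual gives the other) goes through unchanged.
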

\begin{proof}
We have to be careful as we do not fix the characteristic homomorphism of our Drinfeld modules.
Also, in this proof it will be more convenient to represent Drinfeld modules by homomorphisms
$A \to K[\tau]$.

Let $\varphi, \psi\colon A \to K[\tau]$ be Drinfeld modules
and let $f\colon \varphi \to \psi$ be an isogeny, i.e. a nonzero element of $K[\tau]$
satisfying $f \, \varphi(a) = \psi(a) \, f$ for all $a \in A$.
Write $f = g\,\tau^n$ with $g \in K[\tau]$ having nonzero constant coefficient.
We have a well-defined homomorphism $$\varphi'\colon A \to K[\tau], \quad a \mapsto \tau^n \varphi(a) \tau^{-n}.$$
One checks easily that $\varphi'$ is a Drinfeld module.
The identity $f \, \varphi(a) = \psi(a) \, f$ implies that
$g \, \varphi'(a) = \psi(a) \, g$ so
$g$ is an isogeny from $\varphi'$ to $\psi$.
Likewise $\tau^n$ is an isogeny from $\varphi$ to $\varphi'$.

As the constant coefficient of $g$ is nonzero it follows that the characteristic homomorphisms of $\varphi'$ and $\psi$ are the same. The characteristic homomorphisms $\chi_\varphi$ and $\chi_{\varphi'}$ are related by the formula
$$\chi_{\varphi'} = \sigma^n \circ \chi_\varphi$$
where $\sigma\colon K \to K$ is the $q$-th power map.
In particular if $\varphi$ has finite residual characteristic then so do $\varphi'$ and $\psi$,
and the residual characteristics of $\varphi$, $\varphi'$ and $\psi$ coincide.
We will denote these residual characteristics by $\fpres$ as usual.

Next, suppose that $\varphi$ has stable reduction.
It then follows by Lemma 6.1.5 (1) of \cite{papikian} that $\varphi'$ also has stable reduction.
Suppose further that $\psi$ also has stable reduction.
By Theorem 6.2.12 of \cite{papikian} the isogeny $g$ induces a Galois-equivariant injective morphism of period lattices
$g\colon \Lat_{\varphi'} \hookrightarrow \Lat_\psi$.
Assuming that $\Lat_\psi \subset K$ we thus conclude that $\Lat_{\varphi'} \subset K$.

It therefore suffices to prove our lemma in two cases: (1) $f = \tau^n$ and (2) the constant coefficient of $f$ is nonzero.
In the case $f = \tau^n$ the map $\sigma^n$ defines a Galois-equivariant isomorphism of torsion modules
$\varphi[a] \isoarrow \psi[a]$ for all $a \in A$. Invoking Lemma \ref{initial} we obtain a Galois-equivariant isomorphism
$\Tad(\varphi) \isoarrow \Tad(\psi)$, and the claim follows.

Finally we consider the case when the constant coefficient of $f$ is not zero.
We observed above that in this case the characteristic homomorphisms of $\varphi$ and $\psi$ coincide.
As a consequence there is an isogeny $f'\colon \psi \to \varphi$ which is dual to $f$ in the sense that
$f' f = \varphi(a)$ for some nonzero $a \in A$.
Since the multiplication by $a$ is injective on $\Tad(\varphi)$
the isogeny $f$ induces a Galois-equivariant injective morphism
$\Tad(\varphi) \hookrightarrow \Tad(\psi)$.
By definition of the conductor this implies that
$\cond{\varphi} \leqslant \cond{\psi}$.

The identity $f f' f = f \, \varphi(a) = \psi(a) \, f$ implies that
$f f' = \psi(a)$ because the ring $K[\tau]$ is a domain.
Repeating the argument above we deduce an inequality $\cond{\psi} \leqslant \cond{\varphi}$, and the lemma follows.
\end{proof}

\medskip
The ramification filtration of the group $J_K$ changes in a nontrivial way when
one passes to a finite separable extension $L/K$. It is not generally true that
$J_L^\coi$ maps into $J_K^\coi$, cf. the proof of Theorem~\ref{vanish} for a precise
statement.  The conductor thus depends on the choice of the field~$K$ which we
reflect in the notation $\cond{E}$.  Note that we only consider conductors of
Drinfeld modules of stable reduction under the extra assumption that the period
lattice is defined over~$K$.

Calculating the conductor is a hard problem in general.
However we have a full answer for a class of period lattices $\Lat$:

\begin{thm}\label{lsm}%
Let $E$ be a Drinfeld module of stable reduction over $\Spec K$
with period lattice $\Lat$ defined over~$K$.
Suppose that elements $\lv_1, \dotsc, \lv_\lra$ generate
an $A$-submodule of finite index in~$\Lat$. We then have an inequality
\begin{equation*}
\cond{E} \leqslant \max \infn{\lv_i}.
\end{equation*}
If in addition the maximum $b = \max \infn{\lv_i}$ is prime to~$p$
then the following holds:
\begin{statements}
\item\label{lsmall}
The adelic monodromy representation $\rhad\colon J_K \to \GL(\Tad E)$ maps the ramification subgroup $J_K^\ram{b}$
to an infinite subgroup. In particular
\end{statements}
\vspace*{-.25em}%
\begin{equation*}
\cond{E} = \max \infn{\lv_i}.
\end{equation*}
\begin{statements}[start=2]
\item\label{lsmone}
For every prime $\fp$ different from the residual characteristic~$\fpres$
the $\fp$-adic monodromy representation
$\rhp\colon J_K \to \GL(T_\fp E)$ maps the subgroup $J_K^\ram{b}$ to an infinite subgroup.
\end{statements}
\end{thm}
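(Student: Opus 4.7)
The plan is to read off the upper bound $\cond{E} \leq \max_i \infn{\lv_i}$ directly from Theorem~\ref{tatevanish}: applied to the chosen generators, the latter produces $\rhad(J_K^{\ram{\coi}}) = 1$ for $\coi = \max_i \infn{\lv_i} + 1$, which by Definition~\ref{cond} is exactly the bound sought.

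Assume now that $b := \max_i \infn{\lv_i}$ is prime to~$p$, and pick an index $i_0$ with $\infn{\lv_{i_0}} = b$. Since $b$ is prime to~$p$, Theorem~\ref{kumimage}~\ref{primetop} applies verbatim to $\lv_{i_0}$ and tells us that the single homomorphism $\kumadu{\lv_{i_0}}\colon J_K \to \Tad \Eg$ maps $J_K^{\ram{b}}$ \emph{onto} $\Tad \Eg$. Lemma~\ref{unimon} factors the adelic monodromy as
\begin{equation*}
J_K \xrightarrow{\;g\,\mapsto\,\kumad{\ }{g}\;} \Hom_A(\Lat,\,\Tad\Eg) \hookrightarrow \GL(\Tad E),
\end{equation*}
and composing the first arrow with evaluation at $\lv_{i_0}$ recovers $\kumadu{\lv_{i_0}}$. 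Hence $\rhad(J_K^{\ram{b}})$ surjects via evaluation at $\lv_{i_0}$ onto the infinite group $\Tad \Eg$, and is therefore itself infinite. This proves~\ref{lsmall}, and combined with the upper bound it yields $\cond{E} = b$.

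For~\ref{lsmone} I compose throughout with the Galois-equivariant projection $\Tad \Eg = \prod_{\fp \ne \fpres} T_\fp \Eg \twoheadrightarrow T_\fp \Eg$: under this projection $\kumadu{\lv_{i_0}}$ becomes the $\fp$-adic Kummer pairing at $\lv_{i_0}$, which therefore still surjects from $J_K^{\ram{b}}$ onto $T_\fp \Eg$. The $\fp$-adic analogue of Lemma~\ref{unimon} (obtained from the sequences of Lemma~\ref{modunip} with $\fn = \fp^k$ by passing to the inverse limit) then forces $\rhp(J_K^{\ram{b}})$ to be infinite as well.

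The whole argument is essentially a specialization of Theorem~\ref{kumimage}~\ref{primetop} to a single generator of maximal height, so I do not anticipate a genuine obstacle. The only mild subtlety is converting surjectivity of a Kummer pairing into infiniteness of the image of inertia in $\GL$; this is handled by the evaluation-at-$\lv_{i_0}$ argument above, which exhibits $\rhad(J_K^{\ram{b}})$ as surjecting onto the infinite target $\Tad\Eg$ through the injection of $\Hom_A(\Lat,\Tad\Eg)$ into $\GL(\Tad E)$.
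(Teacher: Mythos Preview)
Your argument for parts~\ref{lsmall} and~\ref{lsmone} is correct and matches the paper's proof almost verbatim. The gap is in the first step, the upper bound. You invoke Theorem~\ref{tatevanish} ``applied to the chosen generators,'' but that theorem's explicit bound requires $\lv_1,\dotsc,\lv_\lra$ to generate the period lattice~$\Lat$, whereas here they only generate a finite-index submodule $\Lat'\subset\Lat$. The proof of Theorem~\ref{tatevanish} uses $A$-linearity of the Kummer pairing to pass from the $\lv_i$ to all of~$\Lat$; with your hypothesis this only reaches~$\Lat'$.

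The paper closes the gap by invoking isogeny invariance of the conductor (Lemma~\ref{isogdepth}): the quotient of $\Eg$ by $\Lat'$ is a Drinfeld module isogenous to~$E$, so one may replace $\Lat$ by $\Lat'$ and then apply Theorem~\ref{tatevanish} directly. An alternative fix, bypassing isogeny invariance, is to observe that $\Tad\Eg$ is $A$-torsion-free (it is free over each $A_\fp$), so if $a\lv\in\Lat'$ for some nonzero $a\in A$ then $a\,\kumad{\lv}{g}=\kumad{a\lv}{g}=0$ forces $\kumad{\lv}{g}=0$, extending the vanishing from $\Lat'$ to all of~$\Lat$. Either way, one extra sentence is needed; as written, the appeal to Theorem~\ref{tatevanish} is not yet licensed.
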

\begin{proof}
Since the conductor is invariant under isogenies we are free to assume that the period lattice $\Lat$ is generated by the vectors $\lv_1, \dotsc, \lv_\lra$.
The inequality $\cond{E} \leqslant \max \infn{\lv_i}$
then follows from Theorem~\ref{tatevanish}.

Next assume that the maximum $b = \max \infn{\lv_i}$ is prime to~$p$
and let $\lv_i$ be a vector on which the maximum is achieved.
Theorem~\ref{kumimage}~\ref{primetop} shows that the \KH 
homomorphism $\kumadu{\lv_i}$ maps the ramification subgroup
$J_K^b$ surjectively onto $\Tad \Eg$. Claim~\ref{lsmall} is then 
a consequence of Lemma~\ref{unimon}
and Claim~\ref{lsmone} follows since the natural projection
$\Tad\Eg \twoheadrightarrow T_\fp\Eg$ is surjective.
\end{proof}

One important consequence is that all the values of the conductor
not forbidden by Lemma~\ref{condval} do actually occur:

\begin{cor}
For every integer $\coi \geqslant 1$, $p\nmid\coi$, there is a Drinfeld module
over $\Spec K$ of rank~$2$ and conductor~$\coi$. In particular there are Drinfeld modules
of arbitrarily large conductor.\qed
\end{cor}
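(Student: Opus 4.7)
The plan is to instantiate Theorem \ref{lsm} with a rank-two Drinfeld module whose period lattice is cyclic, generated by a single element of prescribed norm.

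Concretely I would take $A = \Fq[t]$ and let $D$ be the Carlitz module over $\Spec\OK$ defined by $\varphi_D(t) = \theta + \tau$ for any chosen $\theta \in \mK$. This $D$ is a Drinfeld $A$-module of rank one, it has good reduction (its leading $\tau$-coefficient $1$ is a unit in $\OK$), and its residual characteristic is $\fpres = (t)$.

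Next I would pick an element $\lv \in K$ with normalized valuation $v(\lv) = -\coi$; such a $\lv$ exists because $\coi\geqslant 1$, and Poonen's local height then satisfies $\infn{\lv} = \coi$. Setting $\Lat := A\cdot\lv \subset D(K)$, understood as the cyclic submodule generated by $\lv$ under the $A$-action $\varphi_D$, I observe that $\Lat\subset K$ is automatically Galois-invariant, and by Lemma \ref{valrel} applied to $D$ of rank $s=1$ every nonzero element $\varphi_D(a)(\lv)$ has norm $|a|_\infty\cdot\coi\geqslant\coi>0$. Hence $\Lat$ is discrete in $D(K)$ and qualifies as a period lattice defined over $K$ in the sense of \S\ref{conductor}.

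I would then invoke Drinfeld's Tate uniformization theorem \cite[\S7]{drinfeld-ell} (see also \cite[\S6.2]{papikian}) to form the analytic quotient $E := D/\Lat$. This $E$ is a Drinfeld $A$-module of stable reduction over $\Spec K$ with period lattice $\Lat$ defined over $K$, and its rank equals $\rank_A D + \rank_A \Lat = 1 + 1 = 2$. Since $\Lat$ is generated by the single vector $\lv$ with $\infn{\lv}=\coi$ prime to~$p$, Theorem \ref{lsm} yields $\cond{E} = \coi$. The ``in particular'' clause then follows by letting $\coi$ range over arbitrarily large integers prime to~$p$. No substantial obstacle is anticipated: the corollary is essentially a packaging of Theorem \ref{lsm} with the existence of the Carlitz module, and the only minor verification is the discreteness of $\Lat$, which reduces to Lemma \ref{valrel} and the bound $|a|_\infty\geqslant 1$ for nonzero $a\in\Fq[t]$.
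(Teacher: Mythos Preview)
Your proposal is correct and is exactly the construction the paper's bare \qed\ has in mind: produce a rank-$1$ Drinfeld module $D$ of good reduction, pick $\lv\in D(K)$ with $\infn{\lv}=\coi$, form the cyclic lattice $\Lat=A\lv$, pass to the Tate quotient $E=D/\Lat$, and read off $\cond{E}=\coi$ from Theorem~\ref{lsm}. The verification of discreteness via Lemma~\ref{valrel} is clean and correct.

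One small remark: you specialise to $A=\Fq[t]$ and the Carlitz module, whereas the corollary sits in a section where $A$ is an arbitrary coefficient ring. Since the corollary only asserts the existence of \emph{some} Drinfeld module (not a Drinfeld $A$-module for the ambient $A$), your specialisation is legitimate and is in fact the cleanest way to realise the construction; for a general $A$ one would additionally need to arrange a rank-$1$ Drinfeld $A$-module over $\Spec\OK$, which is not entirely automatic.
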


%

By analogy with the classical theory of lattices one defines the $i$-th
successive minimum $\mu_i(\Lat)$ as the minimal real number $b \geqslant 0$
such that the set of vectors $\Latb{b}$ generates an $A$-submodule of rank at least~$i$.
Theorem~\ref{lsm} implies that
\begin{equation*}
\cond{E} \leqslant \mu_\lra(\Lat)
\end{equation*}
where $\mu_\lra(\Lat)$ is the last successive
minimum, $\lra = \rank \Lat$.

Finally let us bring the lattice theory of~\S\ref{lattices} to bear on
the problem of estimating the conductor:
\begin{thm}\label{condvol}
Let $E$ be a Drinfeld module of stable reduction over $\Spec K$
with period lattice $\Lat$ defined over~$K$.
Let $r$ be the rank of~$E$ and set $s: = r - \rank\Lat$.
Then we have an inequality:
\begin{equation*}
\cond{E} \leqslant \volu^s \, \econs^{s(r-s)}
\end{equation*}
where
\begin{itemize}
\item
$\volu$ is the volume of the normed lattice
$(\Lat,\,\infn{\cdot}^{\frac{1}{s}})$,

\item
$C = {\cco}^{\econx}$,

\item
$\fco$ is the field of constants of the coefficient field~$\F$,

\item
$g$ is the genus of the coefficient field~$\F$,

\item
$\rdeg$ is the degree of the place~$\infty$ over~$\fco$.
\end{itemize}
\end{thm}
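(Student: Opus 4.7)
The plan is to combine the generator bound of Theorem \ref{volbound} with the conductor inequality of Theorem \ref{lsm}. The only genuine work is to reconcile the normalization of Poonen's canonical local height with the homogeneity convention for $A$-lattices adopted in \S\ref{lattices}.

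First I package the period lattice as a genuine $A$-lattice. By Lemma \ref{latnorm} the height $\infn{\cdot}$ restricts to $\Lat$ as a discrete norm satisfying $\infn{a\lv} = |a|_\infty^s \infn{\lv}$, where $s = \rank\Eg = r - \rank\Lat$. This fails the homogeneity axiom of Definition \ref{lattice} whenever $s > 1$, but raising to the power $1/s$ repairs it: the pair $(\Lat,\,\infn{\cdot}^{1/s})$ is then an $A$-lattice in the sense of Definition \ref{lattice}, of rank $n = r - s$. Because $\infn{\cdot}$ takes nonnegative integer values on $\Eg(K)$ and vanishes precisely on $\Eg(\OK)$, and because $\Lat \cap \Eg(\OK) = 0$ by the argument already given in Lemma \ref{latnorm}, every nonzero $\lv\in\Lat$ satisfies $\infn{\lv} \geqslant 1$, hence $\infn{\lv}^{1/s} \geqslant 1$. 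This verifies the hypothesis of Theorem \ref{volbound} for the scaled lattice.

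Next I apply Theorem \ref{volbound} to $(\Lat,\,\infn{\cdot}^{1/s})$. It yields that $\Lat$ is generated as an $A$-module by the subset $\{\lv\in\Lat : \infn{\lv}^{1/s} \leqslant \volu \cdot \econs^{r-s}\}$, that is, by vectors whose original Poonen height satisfies
\begin{equation*}
\infn{\lv} \leqslant \volu^{s} \cdot \econs^{s(r-s)}.
\end{equation*}
Theorem \ref{lsm} then bounds $\cond{E}$ by the maximum of $\infn{\lv_i}$ over any such generating family, yielding the advertised inequality $\cond{E} \leqslant \volu^{s}\cdot\econs^{s(r-s)}$.

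There is no serious obstacle beyond this normalization bookkeeping; all the depth resides in the two inputs being assembled. Theorem \ref{volbound} converts the volume datum into a bound on norms of generators via the Castelnuovo--Mumford regularity estimates developed in \S\ref{lattices}, while Theorem \ref{lsm} propagates such a height bound on lattice generators into a bound on the depth of wild ramification through the Kummer pairing analysis of Theorem \ref{kumimage}. The scaling $\infn{\cdot}\mapsto\infn{\cdot}^{1/s}$ is the unique bridge between the ``analytic'' normalization of Poonen's height and the ``arithmetic'' normalization of $|\cdot|_\infty$ fixed in \S\ref{coeffring}, and is what produces the exponents $s$ and $s(r-s)$ in the final estimate.
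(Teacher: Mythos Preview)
Your argument is correct and matches the paper's proof almost verbatim: verify $\infn{\lv}^{1/s}\geqslant 1$ for nonzero $\lv\in\Lat$, apply Theorem~\ref{volbound} to the normalized lattice $(\Lat,\infn{\cdot}^{1/s})$ to obtain generators with $\infn{\lv}\leqslant\volu^s\,\econs^{s(r-s)}$, and then invoke the conductor bound in terms of generator heights. The only cosmetic difference is that the paper cites Theorem~\ref{tatevanish} rather than Theorem~\ref{lsm} for the last step; since Theorem~\ref{volbound} produces honest generators (not merely a finite-index submodule), either reference yields the same inequality $\cond{E}\leqslant\max_i\infn{\lv_i}$.
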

\begin{proof}
Every nonzero period $\lv\in\Lat$ satisfies the inequality
$\infn{\lv}^{\frac{1}{s}} \geqslant 1$. Hence Theorem~\ref{volbound}
implies that the $A$-module $\Lat$ is generated by the subset
\begin{equation*}
\{\lv \in \Lat, \:\infn{\lv}^{\frac{1}{s}} \leqslant \volu \, \econs^{r-s} \}.
\end{equation*}
In view of this fact the bound on the conductor follows
from Theorem~\ref{tatevanish}.
\end{proof}

Whenever $\volu \cdot \econs^{r-s} \ne 1$ 
one can 
tighten the bound in Theorem~\ref{condvol} by $1$ by appealing to Lemma~\ref{condval}.

\subsection{General results}

\begin{thm}\label{vanish}
Let $E$ be a Drinfeld module of finite residual characteristic~$\fpres$ over~$\Spec K$ 
and let $\rhad\colon G_K \to \GL(\Tad E)$ be its restricted adelic Tate module representation.
Then there is a rational number $u\geqslant 0$ such that $\rhad(I_K^u) = 1$.
\end{thm}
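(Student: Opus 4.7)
The strategy is to reduce to the stable reduction setting handled by Theorem~\ref{tatevanish} and transport the resulting vanishing across a finite extension by Herbrand's theorem.

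First, I would replace $K$ by a finite separable extension $L$ chosen so that (i) $E_L$ acquires stable reduction over $L$ in the sense of \S\ref{conductor}, and (ii) the period lattice $\Lat \subset D(L^s)$ of the stable reduction is defined over $L$, i.e.\ $\Lat \subset D(L)$. Step (i) is Drinfeld's stable reduction theorem, and since $\Lat$ is a finitely generated Galois-stable submodule of $D(L^s)$ on which $G_L$ acts continuously through a finite quotient, step (ii) is achieved by a further finite separable enlargement.

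Next, apply Theorem~\ref{tatevanish} to $E_L$ over $L$: there exists an integer $m \geqslant 0$ such that the monodromy representation of $J_L$ on $\Tad E_L$ vanishes on the ramification subgroup $J_L^m$. By Corollary~\ref{abmon} the restricted adelic monodromy representation of $I_L$ factors through $J_L$; since the upper-numbering ramification filtration passes to quotients (the image of $I_L^m$ in $J_L$ is $J_L^m$), this gives $\rhad(I_L^m) = 1$.

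Finally, I would use Herbrand's theorem to transfer the vanishing from $I_L^m$ to a ramification subgroup of $I_K$. For the Galois closure of $L/K$ inside $K^s$, the upper-numbering ramification filtration on the finite quotient $\Gal(L/K)$ is eventually trivial, so there is a rational $u_0$ with $I_K^{u_0} \subseteq I_L$. Moreover, Herbrand's theorem provides the piecewise linear bijective function $\psi_{L/K}\colon [0,\infty) \to [0,\infty)$ comparing the two upper-numbering filtrations on the subgroup $I_L$; pick a rational $u \geqslant u_0$ so that the level of $I_K^u$ as measured in $I_L$ is at least $m$. Then $I_K^u \subseteq I_L^m$, and consequently $\rhad(I_K^u) = 1$.

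The main obstacle is the third step: the upper-numbering ramification filtration is compatible with quotients but not with subgroups, and the core work of the proof is precisely in matching the filtrations of $I_K$ and $I_L$ via Herbrand. Once the translation between the two filtrations is in hand, the conclusion is immediate from Theorem~\ref{tatevanish}.
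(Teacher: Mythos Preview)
Your proposal is correct and follows essentially the same route as the paper: pass to a finite Galois extension $L/K$ over which $E$ has stable reduction with period lattice defined over $L$, invoke Theorem~\ref{tatevanish} to kill $\rhad$ on a deep enough $I_L^m$, and then use Herbrand to descend to a ramification subgroup of $I_K$. The paper packages the last step as the short exact sequence
\[
1 \to I_L^u \to I_K^{\varphi_{L/K}(u)} \to \Gal(L/K)^{\varphi_{L/K}(u)} \to 1
\]
and lets $u$ be large, which is exactly your Herbrand argument phrased in terms of $\varphi_{L/K}$ rather than its inverse $\psi_{L/K}$; one cosmetic simplification is to choose $L$ Galois from the outset rather than passing to the Galois closure afterward.
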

\begin{proof}
Let $L\subset K^s$ be a finite Galois extension of~$K$ such that
$E$ has stable reduction over $\Spec L$ and the local period lattice of~$E$
is defined over~$L$. Theorem~\ref{tatevanish} shows that
$\rhad(I_L^u) = 1$ for all rational numbers $u \gg 0$.
We have a short exact sequence of groups
\begin{equation*}
\begin{tikzcd}
1 \rar & I_L^u \rar & I_K^{\varphi(u)} \rar & \Gal(L/K)^{\varphi(u)} \rar & 1
\end{tikzcd}
\end{equation*}
where $\varphi$ is the lower-to-upper reindexing function of the extension $L/K$ \cite[Chapitre~IV, \S3]{corps}.
Taking a sufficiently large rational number~$u$ we ensure that
$\Gal(L/K)^{\varphi(u)} = 1$, and the claim follows.
\end{proof}

\begin{thm}
Let $E$ be a Drinfeld module of finite residual characteristic~$\fpres$ and rank~$r$ over $\Spec K$.
Suppose that the local period lattice of $E$ has rank $1$.
Then the group scheme $\GL(\Tad E)$ 
contains a closed subgroup scheme $U \cong \smash{(\Ga)^{\times(r-1)}}$
such that the image of inertia is commensurable with $U(\Aad)$.
\end{thm}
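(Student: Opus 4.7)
The plan is to realise $U$ as the image of $\Hom_A(\Lat,\Tad\Eg)$ inside $\GL(\Tad E)$ via the unipotent embedding from Lemma~\ref{unimon}, and to extract the commensurability from the Kummer open-image theorem (Theorem~\ref{kummeropen}). As a preliminary step I would replace $K$ by a finite separable extension $L$ over which $E$ acquires stable reduction with period lattice $\Lat\subset\Eg(L)$ defined over~$L$; this preserves commensurability since $[I_K:I_L]\leqslant[L:K]<\infty$. After this reduction $\Eg$ is a Drinfeld $A$-module of good reduction and rank $r-\rank_A\Lat=r-1$, and $\Lat\subset\Eg(K)$ is a rank-$1$ projective $A$-submodule.

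By Lemma~\ref{unimon} the adelic monodromy $\rhad$ factors as
\begin{equation*}
I_K\xrightarrow{g\,\mapsto\,\kumad{-}{g}}\Hom_A(\Lat,\Tad\Eg)\hookrightarrow\GL(\Tad E),
\end{equation*}
where the second arrow is $f\mapsto 1+\Tad(e_\Lat)\circ f\circ\delad$. Let $U\subset\GL(\Tad E)$ be the image. Since $\Lat$ is rank-$1$ invertible over~$A$ and $\Tad\Eg$ is free of rank $r-1$ over~$\Aad$, the module $\Hom_A(\Lat,\Tad\Eg)\cong\Lat^{-1}\otimes_A\Tad\Eg$ is free of rank $r-1$ over~$\Aad$. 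This identifies $U(\Aad)$ additively with $(\Aad)^{r-1}=(\Ga)^{\times(r-1)}(\Aad)$ and realises $U$ as a closed unipotent subgroup scheme isomorphic to $(\Ga)^{\times(r-1)}$.

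For the commensurability, fix any nonzero $\lv\in\Lat$. Then $A\lv\subset\Lat$ has finite index, and the evaluation $\mathrm{ev}_\lv\colon\Hom_A(\Lat,\Tad\Eg)\to\Tad\Eg$ is injective with finite cokernel, since the cokernel embeds into $\mathrm{Ext}^1_A(\Lat/A\lv,\Tad\Eg)$, which is finite because $\Lat/A\lv$ is finite and $\Tad\Eg$ is $A$-torsion-free and locally of finite type. The composite $\mathrm{ev}_\lv\circ\rhad$ is the Kummer homomorphism $\kumadl$. Since $\Lat$ is discrete in $\Eg(K)$ and $\Eg(\OK)$ is compact, $\Lat\cap\Eg(\OK)$ is a finite torsion-free $A$-module and hence zero, so $\lv\in\Eg(K)\setminus\Eg(\OK)$. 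Theorem~\ref{kummeropen} then shows that the image of $\kumadl$ is open in the compact profinite $\Aad$-module $\Tad\Eg$, hence of finite index. Combined with the finite cokernel of $\mathrm{ev}_\lv$, this forces $\rhad(I_K)$ to have finite index in $U(\Aad)$, which is the desired commensurability.

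The main obstacle I expect is the scheme-theoretic portion of the identification $U\cong(\Ga)^{\times(r-1)}$: one must verify that the image in Lemma~\ref{unimon} is the $\Aad$-points of a genuine closed subgroup scheme of $\GL(\Tad E)$, not merely a closed topological subgroup. This reduces to the observation that the map $f\mapsto 1+\Tad(e_\Lat)\circ f\circ\delad$ is polynomial in the $\Aad$-coordinates of~$f$ relative to any basis of $\Hom_A(\Lat,\Tad\Eg)$, matching the standard unipotent embedding of a vector group into $\GL$.
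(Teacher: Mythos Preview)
Your proposal is correct and follows essentially the same route as the paper: reduce to stable reduction with $\Lat$ defined over $K$, identify $U$ with the unipotent group attached to the two-step filtration $\Tad\Eg\subset\Tad E$ (equivalently, the image of the embedding in Lemma~\ref{unimon}), and deduce openness of $\rhad(I_K)$ in $U(\Aad)\cong\Hom_A(\Lat,\Tad\Eg)$ from Theorem~\ref{kummeropen} via evaluation at a nonzero $\lv\in\Lat$. The paper phrases the last step as ``evaluation at $\lv$ is an open embedding $\Hom_A(\Lat,\Tad\Eg)\hookrightarrow\Tad\Eg$'' rather than your $\mathrm{Ext}^1$ argument, and defines $U$ directly as the unipotent subgroup scheme of the flag rather than as an image, but these are the same objects; your extra justification that $\lv\notin\Eg(\OK)$ is handled in the paper by the earlier Lemma~\ref{latnorm}.
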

\begin{proof}
Since we are free to replace the field~$K$ by a finite separable extension
we assume that $E$ has stable reduction and that its local period lattice $\Lat$
is defined over~$K$. As in \S\ref{conductor} we have a short exact sequence
of Galois representations in finitely generated free $\Aad$-modules:
\begin{equation*}
\begin{tikzcd}
0 \rar & \Tad\Eg \rar & \Tad E \rar & \Aad \otimes_A \Lat \rar & 0.
\end{tikzcd}
\end{equation*}
This defines an $\Aad$-module flag on the middle term. Let $U$ be the corresponding
unipotent subgroup scheme of $\GL(\Tad E)$. By our assumption the period lattice $\Lat$ is
an invertible $A$-module. Hence the group scheme $U$ is isomorphic to $(\Ga)^{\times d}$
where
\begin{equation*}
d = \rank(\Tad\Eg) = r - 1.
\end{equation*}
The image of inertia $\monim$ is contained in the subgroup~$U(\Aad)$
since the representations $\Tad\Eg$ and $\Lat$ are unramified. 

Pick a nonzero period $\lv \in \Lat$.
As the period lattice $\Lat$ has rank~$1$ the evaluation at $\lv$ defines an open embedding:
\begin{equation*}
\Hom_A(\Lat,\:\Tad\Eg) \hookrightarrow \Tad\Eg.
\end{equation*}
By Theorem~\ref{kummeropen}
the homomorphism $\kumadl$ 
has open image.
Lemma~\ref{unimon} implies that the subgroup
$\monim$ is open in~$U(\Aad) \isoarrow \Hom_A(\Lat,\,\Tad\Eg)$. Consequently the index of $\monim$
in~$U(\Aad)$ is finite.
\end{proof}

\end{document}